\renewcommand{\baselinestretch}{\baselinestretch}
\renewcommand{\baselinestretch}{1.1}
\numberwithin{equation}{section}
\newtheorem{thm}{Theorem}[section]
\newtheorem{lem}[thm]{Lemma}
\newtheorem{prop}[thm]{Proposition}
\theoremstyle{definition}
\theoremstyle{remark}
\newtheorem{rmk}[thm]{Remark}
\numberwithin{equation}{section}
\newcommand{\gen}{\text{gen}}
\newcommand{\spn}{\text{spn}}
\newcommand{\ord}{\text{ord}}
\newcommand{\n}{{\mathbb N}}
\newcommand{\z}{{\mathbb Z}}
\newcommand{\q}{{\mathbb Q}}
\newcommand{\D}{{\Delta}}
\newcommand{\Mod}[1]{\ (\mathrm{mod}\ #1)}
\begin{document}
\title[]{Regular ternary triangular forms}

\author{Mingyu Kim and Byeong-Kweon Oh}

\address{Department of Mathematical Sciences, Seoul National University, Seoul 151-747, Korea}
\email{kmg2562@snu.ac.kr}

\address{Department of Mathematical Sciences and Research Institute of Mathematics, Seoul National University, Seoul 151-747, Korea}
\email{bkoh@snu.ac.kr}

\thanks{This work of the second author was supported by the National Research Foundation of Korea (NRF-2017R1A2B4003758).}

\subjclass[2010]{Primary 11E12, 11E20} \keywords{Representations of ternary quadratic forms, triangular numbers}


\begin{abstract} 
An integer of the form $T_x=\frac{x(x+1)}2$ for some positive integer $x$ is called a triangular number. A ternary triangular form $aT_{x}+bT_{y}+cT_{z}$ for positive integers $a,b$ and $c$ is called regular if it represents every positive integer that is locally represented. 
In this article, we prove that there are exactly 49 primitive regular ternary triangular forms.
\end{abstract}

\maketitle

\section{Introduction}

A quadratic homogeneous polynomial 
$$
f(x_1,x_2,\dots,x_n)=\sum_{i,j=1}^n a_{ij} x_ix_j \quad (a_{ij}=a_{ji} \in \z)
$$
is called an integral quadratic form. Throughout this article, we always assume that $f$ is positive definite, that is,  $f(x_1,x_2,\dots,x_n)>0$ for any non-zero real vector $(x_1,x_2,\dots,x_n) \in \mathbb R^n$. 
Let $R$ be any ring containing $\z$. For an integer $N$, if there is a solution $(x_1,x_2,\dots,x_n) \in R^n$ of the equation $f(x_1,x_2,\dots,x_n)=N$, then we say that $N$ is represented by $f$ over $R$. In particular, if $N$ is represented by $f$ over the $p$-adic integer ring $\z_p$ for any prime $p$, then we say that $N$ is locally represented by $f$. From the definition, note that any integer that is represented by $f$ over $\z$ is locally represented by $f$. However, it is known that the converse is not true, in general. A positive definite integral quadratic form is called {\it regular} if the converse is also true, that is, it represents every integer over $\z$ that is locally represented. 

Dickson \cite{D} who initiated the study of regular quadratic forms first coined the term regular. 
Jones and Pall \cite{JP} gave the list of all 102 primitive diagonal regular ternary quadratic forms. 
Watson proved in his thesis \cite{W1} that there are only finitely many equivalence classes of primitive positive definite ternary regular forms. 
Jagy, Kaplansky and Schiemann \cite{JKS} succeeded Watson's study on regular quadratic forms and provide the list of 913 candidates of regular positive definite integral ternary quadratic forms. All but 22 of them are already proved to be regular at that time. 
Recently, the second author \cite{O1} proved the regularities of 8 ternary quadratic forms among remaining 22 candidates.
A conditional proof for the remaining 14 candidates under the Generalized Riemann Hypothesis was given by Lemke Oliver \cite{L}. Note that there are infinitely many regular positive definite integral quaternary quadratic forms (for this, see \cite{E95}).

Now we look into {\it the representations of ternary triangular forms}. An integer of the form $T_x=\frac{x(x+1)}{2}$ for some  positive integer $x$ is called {\it a triangular number}. For positive integers $a_1,a_2,a_3$ with $a_1\le a_2\le a_3$, a polynomial of the form 
{\small$$\D(a_1,a_2,a_3)=\D(a_1,a_2,a_3)(x_1,x_2,x_3):=a_1\frac{x_1(x_1+1)}{2}+a_2\frac{x_2(x_2+1)}{2}+a_3\frac{x_3(x_3+1)}{2}$$}
is called {\it a  ternary triangular form}. We say an integer $N$ is represented by the triangular form $\D(a_1,a_2,a_3)$ if 
\begin{equation} \label{1}
\D(a_1,a_2,a_3)(x_1,x_2,x_3)=N
\end{equation}
has an integer solution $(x_1,x_2,x_3) \in \z^3$. Similarly to the quadratic form case, if Equation \eqref{1} has a solution in $\z_p$ for any prime $p$, then we say that $N$ is locally represented by $\D(a_1,a_2,a_3)$. We say  $\D(a_1,a_2,a_3)$ is {\it regular} if it represents every integer that is locally represented.  Note that any ternary triangular form is regular if it represents all positive integers. Such a ternary triangular form is said to be {\it  universal}.

Gauss' Eureka Theorem says that every positive integer is a sum of at most three triangular numbers, which implies that
the ternary triangular form $\D(1,1,1)$ is universal.
In 1862, Liouville classified all universal ternary  triangular forms, and they are, in fact,  the following seven forms:
$$
\D(1,1,1), \ \ \D(1,1,2), \ \ \D(1,1,4), \ \ \D(1,1,5), \ \
\D(1,2,2), \ \ \D(1,2,3), \ \ \D(1,2,4).
$$
As mentioned above, these universal triangular forms are regular. In 2013, Chan and Oh \cite{CO13} proved that there are only finitely many  regular ternary triangular forms.
In 2015, Chan and Ricci \cite{CR} proved the finiteness of regular ternary triangular forms in a more general setting. 
They actually proved that for any given positive integer $c$, there are only finitely many inequivalent positive ternary regular primitive complete quadratic polynomials with conductor $c$. 
From this follows the finiteness of regular ternary $m$-gonal forms. 
Note that an integer of the form $\frac {(m-2)x^2-(m-4)x}2$ for some integer $x$ is called an $m$-gonal number, and a (regular) ternary  $m$-gonal form is defined similarly.

In this article, we prove that there are exactly $49$ regular ternary triangular forms. 
In the previous papers \cite{CO13} and \cite{CR}, the authors use Burgess' estimation on character sums (for this, see \cite{Bu} and \cite{E}) to prove the finiteness of regular ternary triangular forms. 
It seems to be quite difficult to find an explicit upper bound of the discriminant of regular ternary triangular forms by using Burgess' estimation. In this article, we use a purely arithmetic method to find such an explicit and effective upper bound of the discriminant of regular ternary triangular forms, and finally, we classify all regular ternary triangular forms.

  A $\z$-lattice $L$ is a finitely generated free $\z$-module equipped with a non-degenerate symmetric bilinear form $B$ such that
$B(L,L) \subset \z$. The corresponding quadratic map $Q$ is defined by $Q(\mathbf v)=B(\mathbf v,\mathbf v)$ for any $\mathbf v \in L$. 

Let  $L = \z \mathbf x_1 + \z \mathbf x_2 + \cdots + \z \mathbf x_n$ be a $\z$-lattice. The quadratic form $f_L$ corresponding to $L$ is defined by 
$f_L(x_1,x_2,\dots,x_n)=\sum B(\mathbf x_i,\mathbf x_j)x_ix_j$. Furthermore, the corresponding symmetric matrix $M_L$ is defined by 
$M_L=(B(\mathbf x_i,\mathbf x_j))$, which is called the {\it matrix presentation} of $L$. If $L$ admits an orthogonal basis $\{\mathbf x_1, \dots , \mathbf x_n \}$, we call $L$ {\it diagonal} and simply write
$$
L=\langle Q(\mathbf x_1), \dots , Q(\mathbf x_n) \rangle.
$$
For any odd prime $p$, $\D_p$ denotes a non-square unit in $\z_p$. 

Any unexplained notations and terminologies can be found in \cite{Ki} or \cite{OM}.

\section{Preliminaries}

A nonnegative integer of the form $T_x=\frac{x(x+1)}{2}$ for some positive integer $x$ is called  \textit{a triangular number}. For example, $0,1,3,6,10,15,\cdots$ are triangular numbers. Since $T_x=T_{1-x}$, $T_x$ is a triangular number for any integer $x$.  
For positive integers $a_1,a_2,\dots,a_k$ with $a_1 \le a_2\le \dots\le a_k$, we call a polynomial of the form
$$
\D(a_1,a_2,\dots,a_k)=\D(a_1,a_2,\dots,a_k)(x_1,x_2,\dots,x_k)=a_1T_{x_1}+a_2T_{x_2}+\cdots+a_kT_{x_k}
$$
 \textit{a $k$-ary triangular form.} 
For a triangular form $\D=\D(a_1,a_2,\dots,a_k)$,  we define $d(\D(a_1,a_2,\dots,a_k))=a_1a_2\cdots a_k$, which is called the discriminant of the triangular form $\Delta(a_1,a_2,\dots,a_k)$.  
A triangular form $\D(a_1,a_2,\dots,a_k)$ is called \textit{primitive} if $\gcd(a_1,a_2,\dots,a_k)=1$. 
Unless stated otherwise, we always assume that
\begin{center}
{\textit{every triangular form is primitive.}}
\end{center}
For an integer $n$ and a $k$-ary triangular form $\D(a_1,a_2,\dots,a_k)$, we say that $n$ \textit{is represented by} $\D(a_1,a_2,\dots,a_k)$ if the Diophantine equation
$$
a_1T_{x_1}+a_2T_{x_2}+\cdots+a_kT_{x_k}=n
$$
has an integral solution. In this case, we write  $n\longrightarrow \D(a_1,a_2,\dots,a_k)$.
We also define
$$
T(n,\langle a_1,a_2,\dots,a_k \rangle)
=\left\{ (z_1,z_2,\dots,z_k)\in \z^k : a_1T_{z_1}+a_2T_{z_2}+\cdots+a_kT_{z_k}=n \right\}
$$
and $t(n,\langle a_1,a_2,\dots,a_k \rangle)$ to be the cardinality of the above set.

A triangular form $\D(a_1,a_2,\dots,a_k)$ is called \textit{universal} 
if it represents every positive integer, that is,
$$
a_1T_{x_1}+a_2T_{x_2}+\cdots+a_kT_{x_k}=n \text{ is soluble in } \z
$$
for any positive integer $n$.
A triangular form $\D(a_1,a_2,\dots,a_k)$ is called \textit{regular} 
if it globally represents every integer which is locally represented. 
In other words, $\D(a_1,a_2,\dots,a_k)$ is regular 
if for any integer $n$ such that $a_1T_{x_1}+a_2T_{x_2}+\cdots+a_kT_{x_k}=n$ is soluble in $\z_p$ for any prime $p$, 
the diophantine equation  $a_1T_{x_1}+a_2T_{x_2}+\cdots+a_kT_{x_k}=n$ is soluble in $\z$. As shown in \cite{CO13}, any primitive triangular form is universal over $\z_2$.

Note that a triangular form $\D(a_1,a_2,\dots,a_k)$ represents $n$ if and only if the Diophantine equation
$$
a_1(2x_1-1)^2+a_2(2x_2-1)^2+\cdots+a_k(2x_k-1)^2=8n+a_1+a_2+\cdots+a_k
$$
is soluble in $\z$.
This equivalence shows how the representation of a triangular form is transformed into the representation of a diagonal quadratic form with congruence conditions.
Now, we can reformulate the regularity  in a practical way. 
A ternary triangular form $\D(a,b,c)$ is regular if the following implication holds: for any positive integer $n$,
if $ax^2+by^2+cz^2=8n+a+b+c$ is soluble in $\z_p$ for any odd prime $p$, then there exist odd integers $x,y$ and $z$ such that $ax^2+by^2+cz^2=8n+a+b+c$.

Let $f(x_1,x_2,\dots,x_k)$ be a positive definite integral quadratic form of rank $k$ and let $n$ be an integer. We define
$$
R(n,f)=\left\{ (x_1,x_2,\dots,x_k)\in \z^k : f(x_1,x_2,\dots,x_k)=n\right\} \ \ \text{and} \ \ r(n,f)=\vert R(n,f)\vert .
$$
We say that $n$ \textit{is represented by} $f$ if $r(n,f)>0$.
For a vector $\mathbf{d}=(d_1,d_2,\dots,d_k)\in (\z /2\z)^k$, we also define
$$
R_{\mathbf{d}}(n,f)=\left\{ (x_1,x_2,\dots,x_k)\in R(n,f) : x_i\equiv d_i\Mod 2 \ \ \text{for any} \ \ 1\le i\le k \right\}
$$
and $r_{\mathbf{d}}(n,f)=\vert R_{\mathbf{d}}(n,f)\vert$.

For an integer $n$ and a diagonal quadratic form $\langle a_1,a_2,\dots,a_k\rangle$, we write
$$
n\overset{2}{\longrightarrow} \langle a_1,a_2,\dots,a_k\rangle
$$
if there is a vector $(x_1,x_2,\dots,x_k)\in \z^n$ with 
$(x_1x_2\cdots x_k,2)=1$ such that $a_1x_1^2+a_2x_2^2+\cdots+a_kx_k^2=n$. 
We also use the notation
$$
n\overset{2}{\nrightarrow} \langle a_1,a_2,\dots,a_k\rangle
$$
if there does not exist such a vector  $(x_1,x_2,\dots,x_k)\in \z^n$. Under these notations, the followings are all equivalent:
\begin{enumerate} [(i)]
\item $n\longrightarrow \D(a_1,a_2,\dots,a_k)$;
\item $t(n,\langle a_1,a_2,\dots,a_k\rangle)>0$;
\item $r_{(1,1,\dots,1)}(8n+a_1+a_2+\cdots+a_k,\langle a_1,a_2,\dots,a_k\rangle)>0$;
\item $8n+a_1+a_2+\cdots+a_k\overset{2}{\longrightarrow} \langle a_1,a_2,\dots,a_k\rangle$.
\end{enumerate}

Let $L$ be a $\z$-lattice and let $m$ be a positive integer.
\textit{Watson transformation of $L$ modulo $m$} is defined by
$$
\Lambda_m(L)=\{ x\in L : Q(x+z)\equiv Q(z) \Mod m \ \text{for any} \ z\in L \}. 
$$
We denote by $\lambda_m(L)$ the primitive $\z$-lattice obtained from 
$\Lambda_m(L)$ by scaling $L\otimes \q$ by a suitable rational number.
Let $p$ be an odd prime. Let $L=\langle a,p^mb,p^nc\rangle$ be a ternary 
$\z$-lattice, where $(abc,p)=1$ and $0\le m\le n$. 
Then one may easily check 
$$
\lambda_p(L)\simeq \begin{cases} \langle a,b,c\rangle & \text{if } m=n=0, \\[5pt]
\langle pa,b,p^{n-1}c \rangle & \text{if } 1=m\le n, \\[5pt]
\langle a,p^{m-2}b,p^{n-2}c \rangle & \text{if } 1<m\le n. \end{cases}
$$
For a ternary triangular form $\D(a,b,c)$ and an odd prime $p$, we define
$$
\lambda_p(\D(a,b,c))=\D(a',b',c'),
$$
where $\langle a',b',c'\rangle \simeq \lambda_p(\langle a,b,c\rangle)$.

\begin{lem} \label{lemdescend1}
Let $p$ be an odd prime and let $a,b,c$ be positive integers which are not divisible by $p$.
Let $r, s$ be positive integers.
If the ternary triangular form $\D(a,p^rb,p^sc)$ is regular, then so is $\lambda_p(\D(a,p^rb,p^sc))$.
\end{lem}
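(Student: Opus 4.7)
The plan is to carry out the standard Watson-transformation descent, adapted to triangular forms via the identity $8T_t+1=(2t+1)^2$. The $\lambda_p$-formula recalled just before the lemma splits into two cases: (i) $r=1\le s$, where $\lambda_p(\D(a,pb,p^sc))=\D(pa,b,p^{s-1}c)$; and (ii) $2\le r\le s$, where $\lambda_p(\D(a,p^rb,p^sc))=\D(a,p^{r-2}b,p^{s-2}c)$. Write $\D(a',b',c')$ for this $\lambda_p$-form and set $\varepsilon=1$ in case (i), $\varepsilon=2$ in case (ii).

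Given $n'$ locally represented by $\D(a',b',c')$, I would define $n=p^{\varepsilon}n'+a(p^{2}-1)/8$; this is an integer because $p$ is odd. A direct computation yields the key identity
\[
8n+a+p^rb+p^sc=p^{\varepsilon}\bigl(8n'+a'+b'+c'\bigr).
\]
The first step is to show $n$ is locally represented by $\D(a,p^rb,p^sc)$. At $p=2$ this is immediate from the universality of primitive ternary triangular forms over $\z_2$ recorded just above. At any odd prime $q$, starting from a local representation $a'X_1^2+b'X_2^2+c'X_3^2=8n'+a'+b'+c'$ with $X_1,X_2,X_3$ odd, the substitution $(X,Y,Z)=(pX_1,X_2,X_3)$ gives, after multiplying by $p^{\varepsilon}$, the equation $aX^2+p^rbY^2+p^scZ^2=8n+a+p^rb+p^sc$, with all of $X,Y,Z$ still odd precisely because $p$ is odd.

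By the assumed regularity of $\D(a,p^rb,p^sc)$, there exist $x,y,z\in\z$ with $aT_x+p^rbT_y+p^scT_z=n$; setting $X=2x+1$, $Y=2y+1$, $Z=2z+1$ gives
\[
aX^2+p^rbY^2+p^scZ^2=8n+a+p^rb+p^sc=:N
\]
with $X,Y,Z$ odd. The identity forces $p^{\varepsilon}\mid N$, and combined with $p\nmid a$ together with $p\mid p^rb$, $p\mid p^sc$, this yields $p\mid X$ upon reducing modulo $p$ (case (i)) or modulo $p^2$ (case (ii)). Writing $X=pW$ with $W$ odd (product of two odd integers) and dividing the displayed equation through by $p^{\varepsilon}$ yields, under the substitution $(W,Y,Z)=(2u+1,2v+1,2w+1)$, a global representation of $n'$ by $\D(a',b',c')=\lambda_p(\D(a,p^rb,p^sc))$, as desired.

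The only delicate point is the local-representability step: one must confirm that the substitution $X=pX_1$ produces valid solutions in $\z_q$ for \emph{every} odd prime $q$ (especially $q=p$) and that the oddness of $X,Y,Z$ is preserved. Both are handled by the observation that the \emph{odd variable} constraint is purely $2$-adic and that $p$ being odd makes multiplication by $p$ a unit operation at each $\z_q$ with $q\ne p$ (and a divisibility transfer at $q=p$). Once local representability is secured, regularity of the parent form delivers the descent and the proof is complete.
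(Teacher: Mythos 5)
Your proof is correct and follows essentially the same Watson-descent argument that the paper relies on: the paper itself only cites \cite[Lemma 3.3]{CO13} for this statement, and its fully written-out proof of the companion Lemma \ref{lemdescend2} proceeds by exactly your scheme (shift $n'$ to $n=p^{\varepsilon}n'+a(p^2-1)/8$, transfer local representability, invoke regularity of the parent form, extract $p\mid X$ from $p\nmid a$, and divide through by $p^{\varepsilon}$). The case split $r=1\le s$ versus $2\le r\le s$ matches the displayed $\lambda_p$-formula, and the oddness of $W=X/p$ is preserved since $p$ is odd, so no gap remains.
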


\begin{proof} See \cite[Lemma 3.3]{CO13}.
\end{proof}

Though the proof of the next lemma is quite similar to the proof of Lemma \ref{lemdescend1}, we provide the proof for completeness.

\begin{lem} \label{lemdescend2}
Let $p$ be an odd prime and let $s$ be a positive integer. Let $a,b$, and $c$ be positive integers such that 
$(p,abc)=1$ and $\left( \frac{-ab}{p}\right) =-1$, where $\left( \frac{\cdot}{p} \right)$ is the Legendre symbol modulo $p$.
If the ternary triangular form $\Delta(a,b,p^sc)$ is regular, 
then so is $\lambda_p(\Delta(a,b,p^sc))$.
\end{lem}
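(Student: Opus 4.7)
The plan is a classical lift-and-descend: given a locally-represented integer $n$ for $\lambda_p(\Delta)$, lift it to an integer $n^\ast$ locally represented by $\Delta$, invoke regularity of $\Delta$ to represent $n^\ast$ globally, and then use the anisotropy hypothesis $\left(\frac{-ab}{p}\right) = -1$ to descend the representation back to one of $n$.

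First I would identify $L' := \lambda_p(L)$ for $L := \langle a, b, p^sc \rangle$. Running the definition of $\Lambda_p$ directly, the condition $Q(x+z) \equiv Q(z) \Mod p$ for all $z \in L$ forces $p \mid x_1$ and $p \mid x_2$, so $\Lambda_p(L) \simeq \langle p^2a, p^2b, p^sc \rangle$; scaling to primitivity gives $L' \simeq \langle a, b, p^{s-2}c \rangle$ when $s \ge 2$ and $L' \simeq \langle pa, pb, c \rangle$ when $s = 1$. (This is the case $m = 0$, $n \ge 1$, which is not written out in the formula just before Lemma~\ref{lemdescend1}.)

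Using the standard reformulation of triangular representation as odd-entry representation of $8n + a_1 + a_2 + a_3$ by $\langle a_1, a_2, a_3 \rangle$, set $M := 8n + a + b + p^{s-2}c$ when $s \ge 2$ and $M := 8n + pa + pb + c$ when $s = 1$. Put $t = 2$ or $t = 1$ accordingly, and let $N := p^t M$. A short computation using $p^2 \equiv 1 \Mod 8$ shows $n^\ast := (N - a - b - p^sc)/8$ is a non-negative integer. For each odd prime $q$, any local representation of $M$ by $L'$ over $\z_q$ lifts to one of $N$ by $L$ simply by multiplying the first two coordinates by $p$, and solubility at $q = 2$ is automatic since any primitive ternary triangular form is $\z_2$-universal. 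Hence $n^\ast$ is locally represented by $\Delta$.

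By regularity of $\Delta$, there exist odd integers $\alpha, \beta, \gamma$ with $a\alpha^2 + b\beta^2 + p^sc\gamma^2 = N$. Since $p \mid N$, reducing modulo $p$ gives $a\alpha^2 + b\beta^2 \equiv 0 \Mod p$, and the hypothesis $\left(\frac{-ab}{p}\right) = -1$ forces $p \mid \alpha$ and $p \mid \beta$. Writing $\alpha = p\alpha'$, $\beta = p\beta'$ (both still odd since $p$ is odd) and dividing the identity by $p^t$ yields an odd-entry representation of $M$ by $L'$, i.e.\ $n \longrightarrow \lambda_p(\Delta)$. The main obstacle is juggling the case-split $s = 1$ versus $s \ge 2$, since both $\lambda_p(\Delta)$ and the lifting factor $p^t$ depend on it; the anisotropy hypothesis is exactly what makes the divisibility $p \mid \alpha$, $p \mid \beta$ of the lifted representation automatic, and this is the new ingredient compared with Lemma~\ref{lemdescend1}.
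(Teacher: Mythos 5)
Your proposal is correct and is essentially the paper's own argument: lift $n$ to a target $N=p^tM$ that is locally represented by $\Delta(a,b,p^sc)$, invoke its regularity to get an all-odd representation, use $\left(\tfrac{-ab}{p}\right)=-1$ to force $p\mid\alpha$, $p\mid\beta$, and divide back down. The only (cosmetic) difference is that the paper sidesteps your case split on $s$ by reducing to the regularity of the rescaled form $\Delta(p^2a,p^2b,p^sc)$, which is equivalent to that of $\lambda_p(\Delta(a,b,p^sc))$.
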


\begin{proof}
It is enough to show that $\Delta(p^2a,p^2b,p^sc)$ is regular. 
Let $n$ be a positive integer such that the equation
\begin{equation} \label{eq1}
p^2aT_x+p^2bT_y+p^scT_z=n
\end{equation}
is soluble in $\z_p$ for any prime $p$. Then 
$$
8n+p^2a+p^2b+p^sc\longrightarrow \gen(\langle p^2a,p^2b,p^sc \rangle).
$$ 
Thus
$$
8\left(n+\frac{p^2-1}{8}a+\frac{p^2-1}{8}b\right)+a+b+p^sc
\longrightarrow \gen(\langle a,b,p^sc \rangle).
$$
Since $\Delta(a,b,p^sc)$ is regular, there is a vector $(x,y,z)\in \z^3$ with 
$xyz\equiv 1\Mod 2$ such that $ax^2+by^2+p^scz^2=8n+p^2a+p^2b+p^sc$. 
Since $n$ is divisible by $p$, we have $ax^2+by^2\equiv 0\Mod p$. 
From the assumption $\left(\displaystyle\frac{-ab}{p}\right)=-1$, 
we have $x\equiv y\equiv 0\Mod p$. So
$$
p^2a\left(\frac{x}{p}\right)^2+p^2b\left(\frac{y}{p}\right)^2+p^scz^2
=8n+p^2a+p^2b+p^sc
$$
with $\frac{x}{p} \cdot \frac{y}{p} \cdot z \equiv 1\Mod 2$. 
Thus Equation $\eqref{eq1}$ is soluble in $\z$. This completes the proof.
\end{proof}

For an odd prime $p$ and a ternary $\z$-lattice $L$, we say that $L$ is \textit{$p$-stable} if
$$
\langle 1,-1\rangle \longrightarrow L_p \quad \text{or} \quad L_p\simeq \langle 1,-\D_p\rangle \perp \langle p\epsilon_p\rangle
$$
for some $\epsilon_p\in \z_p^{\times}$. We say that $L$ is \textit{stable} if $L$ is $p$-stable for every odd prime $p$.
A ternary triangular form is called \textit{$p$-stable} (\textit{stable}) if the corresponding quadratic form is $p$-stable (stable, respectively).
Let $\D(a,b,c)$ be a regular ternary triangular form. 
Then by taking $\lambda_q$-transformations to $\D(a,b,c)$ repeatedly, if possible, for any odd prime $q$ dividing the discriminant,  we may obtain a stable regular ternary triangular form $\D(a',b',c')$ by Lemmas \ref{lemdescend1} and \ref{lemdescend2}.
Note that the corresponding quadratic form $\langle a',b',c'\rangle$ has a smaller discriminant and a simpler local structure than $\langle a,b,c\rangle$.

\section{Stable regular ternary triangular forms}
In this section, we prove that there are exactly 17 stable regular ternary triangular forms.
Throughout this section, $r_k$ denotes the $k$-th odd prime so that $\{ r_1=3<r_2=5<r_3=7<\cdots \}$ is the set of all odd primes.  Let $\D(a,b,c)$ be a stable regular ternary triangular form. 
We always assume that $0<a\le b\le c$.

\begin{lem} \label{lembound1}
For an integer $s$ greater than $1$, let $p_1<p_2<\cdots <p_s$ be odd primes. 
Let $u$ be an integer with $(u,p_1p_2\cdots p_s)=1$ and let $v$ be an arbitrary integer.
Then there is an integer $n$ with $0\le n<(s+2)2^{s-1}$ such that 
$(un+v,p_1p_2\cdots p_s)=1$.
\end{lem}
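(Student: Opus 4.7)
The plan is to prove the lemma by a direct inclusion--exclusion count of the admissible $n$'s in $[0,N)$, where $N:=(s+2)2^{s-1}$. For each $I\subseteq\{1,\dots,s\}$, set $B_I:=\{n\in\z : p_i\mid un+v\text{ for all }i\in I\}$ and $P_I:=\prod_{i\in I}p_i$. Because $u$ is a unit modulo each $p_i$, the Chinese Remainder Theorem gives that $B_I$ is a single residue class modulo $P_I$, so $|B_I\cap[0,N)|=N/P_I+\theta_I$ with $|\theta_I|\leq 1$ and $\theta_\emptyset=0$. Standard inclusion--exclusion then yields
\[
\#\{n\in[0,N) : \gcd(un+v,\,p_1\cdots p_s)=1\} = \sum_{I\subseteq\{1,\dots,s\}}(-1)^{|I|}|B_I\cap[0,N)| \geq N\prod_{i=1}^{s}\!\left(1-\frac{1}{p_i}\right)-(2^s-1),
\]
so it suffices to prove $N\prod_{i=1}^s(1-1/p_i)\geq 2^s$, i.e.\ $(s+2)\prod_{i=1}^s(1-1/p_i)\geq 2$.

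Let $q_i$ denote the $i$-th odd prime ($q_1=3,\,q_2=5,\,q_3=7,\dots$). A trivial induction on $i$ shows $p_i\geq q_i$, so it is enough to verify the explicit inequality $G(s):=(s+2)\prod_{i=1}^s(1-1/q_i)\geq 2$ for every $s\geq 1$. I would prove this by induction on $s$: the base case is $G(1)=3\cdot(2/3)=2$, and for the inductive step
\[
\frac{G(s+1)}{G(s)} = \frac{s+3}{s+2}\cdot\frac{q_{s+1}-1}{q_{s+1}} \geq 1 \iff q_{s+1}\geq s+3,
\]
which holds because the $k$-th odd prime satisfies $q_k\geq 2k+1$ (it is at least the $k$-th odd integer that is $\geq 3$), giving $q_{s+1}\geq 2s+3\geq s+3$.

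The main (and really only) technical point will be the off-by-one bookkeeping in the inclusion--exclusion error term; once the uniform bound $|\theta_I|\leq 1$ is in hand, the whole argument reduces to the elementary prime bound $q_k\geq 2k+1$, and no sieve machinery beyond the Chinese Remainder Theorem is required.
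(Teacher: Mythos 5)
Your proposal is correct. There is nothing in the paper to compare it with: the authors prove this lemma only by citing \cite[Lemma 3]{KKO}, so your argument is a self-contained replacement rather than a variant of an in-paper proof. The two points on which everything rests both hold. First, for nonempty $I$ the set $B_I$ is a single residue class modulo $P_I$ (Chinese Remainder Theorem together with the invertibility of $u$ modulo each $p_i$), so the cardinality of $B_I\cap[0,N)$ differs from $N/P_I$ by less than $1$; summing over the $2^s-1$ nonempty subsets gives the stated error bound, and since the count of admissible $n$ is an integer, $N\prod_{i=1}^s(1-1/p_i)\ge 2^s$ does force at least one such $n$ in $[0,N)$. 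Second, the reduction to $(s+2)\prod_{i=1}^s(1-1/q_i)\ge 2$ via $p_i\ge q_i$, the base case $G(1)=2$, and the equivalence $G(s+1)/G(s)\ge 1\Leftrightarrow q_{s+1}\ge s+3$ combined with $q_{s+1}\ge 2(s+1)+1=2s+3$ are all valid; proving the inequality for every $s\ge 1$ covers the lemma's hypothesis $s>1$ with room to spare. Your full $2^s$-term sieve is heavier bookkeeping than the short combinatorial induction one would expect behind a bound of the shape $(s+2)2^{s-1}$, but it is clean, elementary, and in fact shows the interval length in the statement is far from optimal; for the small values of $s$ that the paper actually uses, either route is perfectly adequate.
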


\begin{proof}
See \cite[Lemma 3]{KKO}.
\end{proof}

Though Lemma \ref{lembound1} gives, in general, a nice upper bound of the longitude of arithmetic progression satisfying the assumption,  there is a shaper bound in some restricted situation.

\begin{lem} \label{lembound2}
Under the same notations given in Lemma \ref{lembound1}, if  $s<p_1$, 
then there is an integer $n$ with $0\le n\le s$ such that $(un+v,p_1p_2\cdots p_s)=1$.
\end{lem}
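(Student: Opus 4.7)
The plan is to argue by a straightforward pigeonhole/counting argument on the set $\{0,1,\dots,s\}$, which has $s+1$ elements. For each prime $p_i$ (with $1 \le i \le s$), I would count how many $n$ in this set satisfy $p_i \mid un+v$. Since $(u,p_i)=1$, the congruence $un+v\equiv 0\pmod{p_i}$ has a unique solution modulo $p_i$, so the bad $n$ form a single arithmetic progression with common difference $p_i$ inside $\{0,1,\dots,s\}$.

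The key observation is that the hypothesis $s<p_1\le p_i$ gives $s+1\le p_i$, so any interval of $s+1$ consecutive integers contains at most one integer in a given residue class modulo $p_i$. Hence each prime $p_i$ rules out at most one value of $n$ in $\{0,1,\dots,s\}$.

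Summing over the $s$ primes, the total number of $n\in\{0,1,\dots,s\}$ that fail to satisfy $(un+v,p_1p_2\cdots p_s)=1$ is at most $s$. Since the set has $s+1$ elements, at least one $n$ survives, which is the desired $n$. There is essentially no obstacle: the argument is a direct pigeonhole count exploiting that $s<p_1$ precisely guarantees each residue class is hit at most once in a window of length $s+1$. The only bookkeeping to check is that the bound $s+1\le p_1$ follows from the strict inequality $s<p_1$ among integers.
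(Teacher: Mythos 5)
Your argument is correct, and it is exactly the standard pigeonhole count the paper has in mind when it dismisses the proof as ``Trivial'': each $p_i$ excludes at most one $n$ from the window $\{0,1,\dots,s\}$ of length $s+1\le p_1\le p_i$, so at most $s$ values are excluded and one survives. No issues.
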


\begin{proof}
Trivial.
\end{proof}

\begin{lem} \label{lembound3}
Let $p\ge 5$ be a prime and let $d$ be a positive integer with $(d,p)=1$. 
Let $L=\langle a,b,c \rangle$ be a $p$-stable $\z$-lattice that is anisotropic over $\z_p$. 
Then there is an integer $g$ such that
\begin{itemize} 
\item [(i)] $0<g<p^2$;
\item [(ii)] $dg+a+b\nrightarrow \langle a,b \rangle$ over $\z_p$;
\item [(iii)] $dg+a+b+c\longrightarrow \langle a,b,c \rangle$ over $\z_p$;
\item [(iv)] $\text{max}\left\{ \ord_p(dg+a+b),\ord_p(dg+a+b+c)\right\} \le 1$.
\end{itemize}
\end{lem}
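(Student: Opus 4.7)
The plan is to pin down the local structure of $L$ at $p$, make one observation that rules out $g = 0$, and then show that the number of residues $g \pmod{p^2}$ satisfying the analogues of (ii)--(iv) is positive in each resulting case.

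Since $L$ is anisotropic over $\z_p$, the alternative $\langle 1, -1\rangle \longrightarrow L_p$ in the definition of $p$-stability is impossible (the hyperbolic plane contains isotropic vectors), and so $L_p \simeq \langle 1, -\D_p\rangle \perp \langle p\epsilon_p\rangle$. Comparing Jordan types with the diagonal form $\langle a, b, c\rangle_p$ shows that exactly one of $a, b, c$ has $\ord_p = 1$, while the other two are units whose negative product is a non-residue modulo $p$. A useful consequence is that $a + b$ is itself a unit modulo $p$: if both $a, b$ are units, then $p \mid a + b$ would give $-ab \equiv a^2 \pmod{p}$, a square, contradicting the non-residue hypothesis; if one of $a, b$ is $p$-divisible, the other is a unit. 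Since $\langle a, b\rangle$ trivially represents $a + b$ via $(x_1, x_2) = (1, 1)$, condition (ii) fails at $g = 0$, so any valid residue in $\{0, 1, \dots, p^2 - 1\}$ automatically lies in $\{1, \dots, p^2 - 1\}$.

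Since $(d, p) = 1$, the map $g \mapsto x := dg + a + b$ is a bijection on $\z/p^2\z$, so it suffices to find one residue class $x \pmod{p^2}$ for which $x$ and $y := x + c$ jointly satisfy the analogues of (ii)--(iv). I would split into three subcases according to which of $a, b, c$ has $\ord_p = 1$. In the hardest subcase, where $c$ is the $p$-divisible entry, $\langle a, b\rangle_p$ represents exactly the elements of even $p$-adic valuation, so (ii) combined with (iv) forces $\ord_p(x) = 1$; excluding the single residue with $\ord_p(y) \ge 2$ leaves $p - 2$ residues, and the square-class condition on $y/p$ coming from (iii) narrows this to $(p - 3)/2 \ge 1$ valid residues for $p \ge 5$. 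In the remaining two subcases (where $a$ or $b$ is the $p$-divisible entry), taking $\ord_p(x) = 1$ with an appropriate restriction on the square class of $x/p$ makes $y = x + c$ a $p$-adic unit, so (iii) is automatic, producing at least $(p - 1)/2 \ge 2$ valid residues.

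The main obstacle is the tight count in the first subcase at $p = 5$: there only a single valid residue modulo $p^2$ exists, and the argument hinges on the observation from the previous paragraph that this residue cannot correspond to $g = 0$.
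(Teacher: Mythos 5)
Your proposal is correct, and it rests on the same skeleton as the paper's proof: reduce to the Jordan splitting $L_p\simeq \langle 1,-\D_p\rangle\perp\langle p\epsilon_p\rangle$, observe that exactly one of $a,b,c$ has $\ord_p=1$, and split into cases according to which one it is. The difference is in execution. Where you count residue classes modulo $p^2$, the paper simply exhibits explicit witnesses: when $p\mid c$ it takes $dg+a+b\equiv 3c\Mod{p^2}$ (so $dg+a+b$ has odd order, hence fails to be represented by $\langle 1,-\D_p\rangle$, while $dg+a+b+c\equiv 4c$ is $p$ times a unit in the represented square class since $4(c/p)\epsilon_p$ is a square); when $p\mid ab$, say $p\mid b$, it picks a unit $a'$ with $aa'$ a non-residue and $a'\not\equiv -c\Mod p$ and solves $dg+a+b\equiv a'\Mod p$, which only needs a congruence modulo $p$. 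Your count of $(p-3)/2$ valid classes in the case $p\mid c$ matches the paper's implicit use of $p\ge 5$ (the paper's witness $3c\mapsto 4c$ is one of your $(p-3)/2$ classes), and your choice of order-one witnesses in the $p\mid ab$ case is a legitimate alternative to the paper's unit witnesses. Your approach also makes explicit a point the paper glosses over: that the required residue class for $g$ is never the zero class (because $a+b$ is a $p$-adic unit represented by $\langle a,b\rangle$ via $(1,1)$, whereas every valid $dg+a+b$ in your construction has $\ord_p=1$), which is exactly what justifies the paper's assertion that a \emph{positive} $g_1<p^2$ with $dg_1+a+b\equiv 3c\Mod{p^2}$ exists. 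The counting version is slightly longer but pinpoints where $p\ge 5$ is genuinely needed; the explicit-witness version is shorter to verify.
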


\begin{proof}
Since $L$ is $p$-stable and is anisotropic over $\z_p$ by assumption, we have 
$$
\langle a,b,c \rangle \simeq \langle 1,-\Delta_p \rangle \perp \langle p\epsilon_p \rangle \text{ over } \z_p,
$$
for some $\epsilon_p\in \z_p^{\times}$.  First, we assume that $p$ divides $c$. 
Since $\langle a,b \rangle \simeq \langle 1,-\Delta_p \rangle$, it does not represent $\gamma \in \z_p$ satisfying $\ord_p\gamma \equiv 1\Mod 2$. 
Since $p\ge 5$, there exists a positive integer $g_1$ with $g_1<p^2$ such that
$$
dg_1+a+b\equiv 3c\Mod{p^2}.
$$
Then one may easily check that $g_1$ satisfies all conditions given above. 
Now, assume that $p$ divides $ab$. Without loss of generality, we may assume that $p$ divides $b$. 
Since $p\ge 5$, there exists an integer $a'$ with $(p,a')=1$ such that 
$aa'$ is not a square modulo $p$ and $a'\not\equiv -c\Mod p$. 
We take a positive integer $g_2$ with $g_2<p$ such that $dg_2+a+b\equiv a'\Mod p$. 
One may easily show that $g_2$ satisfies all conditions given above, which completes the proof.
\end{proof}

Let $T$ be the set of odd primes $p$ such that the diagonal ternary quadratic form $\langle a,b,c\rangle$ is anisotropic over $\z_p$. Since such primes are only finitely many, we let
\begin{align*}
T&=\{ p : p\ge 3,\; \langle a,b,c \rangle \text{ is anisotropic over } \z_p \} \\
&=\{ p_1<p_2<\cdots <p_t \} .
\end{align*}
Let
$$
T'=T-\{3\}=\{ q_1<q_2<\dots<q_{t'} \}.
$$
Note that $t'=t$ if $3\not\in T$, and $t'=t-1$ otherwise.

\begin{lem} \label{lemt17}
Under the assumptions given above, we have $t'\le 17$.  
\end{lem}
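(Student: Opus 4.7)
The plan is to argue by contradiction: assume $t' \ge 18$, and construct a positive integer $N$ that is locally represented at every prime by $\langle a,b,c\rangle$ but for which no representation with all three variables odd is numerically possible, contradicting regularity of $\D(a,b,c)$. First, for each $q_i \in T'$ we apply Lemma~\ref{lembound3} with $d = 8$ after re-labelling $\{a,b,c\}$ so that the coefficient divisible by $q_i$ plays the role of the third coordinate; this is legitimate since $q_i$-stability plus anisotropy forces exactly one of $a,b,c$ to have $q_i$-order~$1$, and the ``Case~1'' branch of the proof of that lemma applies. This yields $g_i \in (0, q_i^2)$ whose key property is: any $\z_{q_i}$-solution of $aX^2+bY^2+cZ^2 \equiv 8g_i+a+b+c \pmod{q_i^2}$ has the variable attached to the $q_i$-divisible coefficient in $\{\pm 2\} \pmod{q_i}$ and the other two variables in $\{0\} \pmod{q_i}$.

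Next, by CRT we pick $n_0 \in [0, \prod q_i^2)$ with $n_0 \equiv g_i \pmod{q_i^2}$ for every $i$; since $\prod q_i^2$ is coprime to $3$, a small shift $n = n_0 + k \prod q_i^2$ (supplied by Lemma~\ref{lembound1}, or by direct inspection of a few residues if only $p=3$ is at issue) makes $N := 8n + a+b+c$ locally represented at~$3$ as well. Local representability at $p \notin T$ is automatic from the isotropy of $\langle a,b,c\rangle_p$, and at $p = 2$ from the universality of primitive triangular forms over~$\z_2$ noted in Section~2. Regularity of $\D(a,b,c)$ then supplies odd integers $X,Y,Z$ with $aX^2+bY^2+cZ^2 = N$. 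Writing $S_a \sqcup S_b \sqcup S_c$ for the partition of $\{1,\dots,t'\}$ induced by which of $a,b,c$ each $q_i$ divides, and setting $Q_\bullet = \prod_{i \in S_\bullet} q_i$ and $Q = Q_a Q_b Q_c$, the local constraints from the first paragraph force $Q_b Q_c \mid X$, $Q_a Q_c \mid Y$, $Q_a Q_b \mid Z$, together with $X^2 \equiv 4 \pmod{Q_a}$ and the analogues for $Y$ and $Z$.

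Since $X,Y,Z$ are odd non-zero integers while $a \ge Q_a$, $b \ge Q_b$, $c \ge Q_c$, this gives the lower bound $N \ge Q(Q_a Q_b + Q_a Q_c + Q_b Q_c)$, to be pitted against the upper bound $N \le C \prod q_i^2 + (a+b+c)$ coming from the construction. Substituting the concrete sizes of the first eighteen primes $\ge 5$ and using the pigeonhole partition of $\{1,\dots,t'\}$ then delivers the contradiction. The main obstacle is this final quantitative comparison: the crude lower bound is only $\Omega(Q^{5/3})$ while the upper bound is $O(Q^2)$, so to make them clash at exactly $t' = 18$ we will need to sharpen the lower bound on $|X|$ by exploiting the congruence $X^2 \equiv 4 \pmod{Q_a}$ (beyond the naive $|X| \ge Q_b Q_c$) and to keep the constant $C$ small by a tight use of Lemma~\ref{lembound1}. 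Ruling out partitions $S_a \sqcup S_b \sqcup S_c$ that would be too lopsided is an additional technical point; this delicate arithmetic bookkeeping, rather than any single deep ingredient, is where the main work lies.
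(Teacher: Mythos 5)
Your overall strategy (force a representation with odd variables of a specially constructed target $N$, then derive a size contradiction) is the same as the paper's, but your implementation has a gap that you yourself flag and that is in fact fatal as written. You impose the full congruence condition of Lemma~\ref{lembound3} at \emph{every} prime of $T'$ simultaneously via CRT, which forces the admissible $n$ to range over an interval of length about $\prod_{i} q_i^2 = Q^2$, so your upper bound on $N$ is of order $Q^2$. The lower bound you extract from the forced divisibilities $Q_bQ_c\mid X$, $Q_aQ_c\mid Y$, $Q_aQ_b\mid Z$ is $Q(Q_aQ_b+Q_aQ_c+Q_bQ_c)$, which by AM--GM is only guaranteed to be of order $Q^{5/3}$ when the partition is balanced; since $Q^{5/3}\le Q^2$, the two bounds never clash, for $t'=18$ or for any $t'$. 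The extra congruence $X^2\equiv 4\Mod{Q_a}$ does not rescue this: it restricts $X$ to $2^{|S_a|}$ residue classes modulo $Q_a$ on top of $Q_bQ_c\mid X$, but the smallest such $X$ can still equal $Q_bQ_c$ itself (e.g.\ when $Q_bQ_c\equiv\pm2\Mod{Q_a}$), so no improvement of the exponent is available. There is no "delicate bookkeeping" that closes a gap between exponents $5/3$ and $2$.

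The paper's proof avoids exactly this trap by being far more economical with the congruence conditions. It applies Lemma~\ref{lembound3} (with $d=24$, which also disposes of the prime $3$) at the single smallest prime $q_1$ only, and at the remaining primes $q_2,\dots,q_{t'}$ it demands merely that the target be a unit --- which suffices for local representability since a $p$-stable anisotropic lattice represents all $p$-adic units --- at the cost, via Lemma~\ref{lembound1}, of a factor $(t'+1)2^{t'-2}$ that is independent of the sizes of the primes. The resulting target is $24k+a+b+c$ with $k\le q_1^2(t'+1)2^{t'-2}$, and the single condition $24k+a+b\nrightarrow\langle a,b\rangle$ forces only $z^2\ge 9$, hence $c\le 3k$. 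The contradiction then comes from comparing $q_1\cdots q_{t'}\le abc\le c^3\le\bigl(3q_1^2(t'+1)2^{t'-2}\bigr)^3$: the left side grows super-exponentially in $t'$ (a product of distinct primes) while the right side grows only exponentially, so the inequality fails once $t'\ge 18$. If you want to repair your argument you should adopt this asymmetry: pay the quadratic price $q^2$ at one prime only, and pay only a coprimality (unit) price at all the others.
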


\begin{proof}
Note that $\langle a,b,c\rangle$ represents every integer of the form $24n+a+b+c$ over $\z_3$.
Let $g$ be a positive integer satisfying Lemma \ref{lembound3} in the case when $p=q_1$ and $d=24$.

By Lemma \ref{lembound1}, there is an integer $h$ with $0\le h<(t'+1)2^{t'-2}$ 
such that $(24q_1^2 h+24g+a+b+c,q_2\cdots q_{t'})=1$.
If we let $k=q_1^2 h+g$, then one may easily show that
\begin{equation} \label{eq2}
24k+a+b\nrightarrow \langle a,b \rangle
\end{equation}
and
$$
24k+a+b+c\longrightarrow \gen(\langle a,b,c \rangle ).
$$ 
Since $\Delta (a,b,c)$ is regular, 
there is a vector $(x,y,z)\in \z^3$ with $xyz\equiv 1\Mod 2$ 
such that $ax^2+by^2+cz^2=24k+a+b+c$.
From Equation \eqref{eq2}, we have $z^2\ge 9$. 
So $a+b+9c\le 24k+a+b+c$ and we have $c\le 3k$. Now
$$
q_1q_2\cdots q_{t'}\le abc\le c^3\le (3k)^3\le (3q_1^2(t'+1)2^{t'-2})^3.
$$
Assume to the contrary that $t'\ge 18$. Then one may easily show that 
$$
r_8r_9\cdots r_{t'+1}>(3(t'+1)2^{t'-2})^3.
$$ 
Since $q_i\ge r_{i+1}$ for any $i$, we have
$$
(q_1\cdots q_6)q_7q_8\cdots q_{t'} > q_1^6 \cdot r_8r_9\cdots r_{t'+1}>(3q_1^2(t'+1)2^{t'-2})^3,
$$
which is a contradiction. Therefore we have $t'\le 17$. This completes the proof. 
\end{proof}

If we are able to use Lemma  \ref{lembound2} instead of Lemma \ref{lembound1}, then we may have more effective upper bound of $t'$ than the previous lemma.  

\begin{lem} \label{keylem} Under the same notations given above, if $0<t'-j<q_{j+1}$ for some $j$ such that $1\le j\le t'-1$, then we have
$$
q_1q_2\cdots q_{t'}<a(3q_1^2q_2\cdots q_j(t'-j+1))^2\le (3q_1^2q_2\cdots q_j(t'-j+1))^3.
$$ 
\end{lem}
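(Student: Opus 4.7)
The plan is to repeat the argument of Lemma \ref{lemt17}, modifying only the step where coprimality with $q_2,\dots,q_{t'}$ is guaranteed. We split these $t'-1$ primes into the ``small'' ones $q_2,\dots,q_j$ (handled by the Chinese Remainder Theorem) and the ``large'' ones $q_{j+1},\dots,q_{t'}$ (handled by the sharper Lemma \ref{lembound2}, whose hypothesis $s<p_1$ is exactly the assumption $t'-j<q_{j+1}$).

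As in Lemma \ref{lemt17}, begin with Lemma \ref{lembound3} applied to $p=q_1$ and $d=24$, producing $g$ with $0<g<q_1^2$ satisfying (ii)--(iv). For each $i=2,\dots,j$, since $\gcd(24q_1^2,q_i)=1$, exactly one residue class of $h\Mod{q_i}$ makes $24q_1^2 h+24g+a+b+c$ divisible by $q_i$; by CRT choose $h_0$ with $0\le h_0<q_2\cdots q_j$ avoiding every forbidden class simultaneously. Apply Lemma \ref{lembound2} with $u=24q_1^2q_2\cdots q_j$, $v=24q_1^2h_0+24g+a+b+c$, and the $s=t'-j$ primes $q_{j+1}<\cdots<q_{t'}$ to obtain $h'$ with $0\le h'\le t'-j$ and $\gcd(uh'+v,q_{j+1}\cdots q_{t'})=1$. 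Setting $k=q_1^2(q_2\cdots q_j\cdot h'+h_0)+g$, a direct count gives
$$k \le q_1^2\bigl(q_2\cdots q_j(t'-j+1)-1\bigr)+(q_1^2-1)= q_1^2q_2\cdots q_j(t'-j+1)-1.$$

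The remainder is verbatim from Lemma \ref{lemt17}: by construction $24k+a+b\nrightarrow\langle a,b\rangle$ over $\z_{q_1}$ and $24k+a+b+c\to\gen(\langle a,b,c\rangle)$, so regularity of $\D(a,b,c)$ produces $(x,y,z)\in\z^3$ with $xyz$ odd solving $ax^2+by^2+cz^2=24k+a+b+c$. The case $z=\pm1$ is ruled out by the non-representation, so $z^2\ge 9$ and hence $c\le 3k<3q_1^2q_2\cdots q_j(t'-j+1)$. Since $q_1q_2\cdots q_{t'}\mid abc$, $b\le c$, and $a\le c<3q_1^2q_2\cdots q_j(t'-j+1)$, we conclude
$$q_1q_2\cdots q_{t'}\le abc\le ac^2<a\bigl(3q_1^2q_2\cdots q_j(t'-j+1)\bigr)^2\le \bigl(3q_1^2q_2\cdots q_j(t'-j+1)\bigr)^3.$$

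The main subtlety is arranging the CRT step so that $q_2,\dots,q_j$ enter the bound only to the first power: this is possible because at these primes we only need coprimality of $24k+a+b+c$ with $q_i$ (which already ensures local representability by the $q_i$-stable anisotropic form $\langle a,b,c\rangle$, since such a form represents every $q_i$-adic unit), rather than the full conclusion of Lemma \ref{lembound3} (which would demand a modulus $q_i^2$ and destroy the improvement over Lemma \ref{lemt17}).
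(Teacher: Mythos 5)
Your proposal is correct and follows essentially the same route as the paper: Lemma \ref{lembound3} at $q_1$, a shift of size less than $q_1^2q_2\cdots q_j$ forcing mere coprimality at $q_2,\dots,q_j$ (which suffices since a $p$-stable anisotropic form represents every $p$-adic unit), Lemma \ref{lembound2} at $q_{j+1},\dots,q_{t'}$, and then the regularity argument forcing $z^2\ge 9$ and $c\le 3k$. The only cosmetic difference is that you pick the middle shift $h_0$ by the Chinese Remainder Theorem in one step, whereas the paper builds it greedily with binary coefficients $\epsilon_i\in\{0,1\}$ attached to the partial products $q_1^2q_2\cdots q_i$; both yield the same bound $k<q_1^2q_2\cdots q_j(t'-j+1)$.
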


\begin{proof}
Note that $\langle a,b,c \rangle$ represents every integer of the form $24n+a+b+c$ over $\z_3$.
Let $g$ be a positive integer satisfying Lemma \ref{lembound3} in the case when $p=q_1$ and 
$d=24$. Let 
$$
g_j=\begin{cases} g \quad &\text{if $j=1$}, \\
g+\epsilon_1q_1^2 \quad &\text{if $j=2$}, \\
g+\epsilon_1 q_1^2+\epsilon_2 q_1^2q_2+\cdots+\epsilon_{j-1}q_1^2q_{2}q_{3}\cdots q_{j-1} \quad &\text{if $j\ge 3$},
\end{cases}
$$
where for each $i$, $\epsilon_i$ is suitably chosen in $\{0,1\}$ so that 
$$
24g_j+a+b+c \not\equiv 0 \Mod {q_2\cdots q_j}
$$
for any $j\ge 2$.
Note that
$g_1=g<q_1^2$ and 
$g_j < q_1^2q_2\cdots q_{j}$ for any $j\ge 2$. Since $0<t'-j<q_{j+1}$ by assumption, we apply Lemma \ref{lembound2} with odd primes
$q_{j+1}<q_{j+2}<\cdots<q_{t'}$, $u=24q_1^2 q_2\cdots q_j$ and $v=24g_j+a+b+c$ so that we may conclude that there is an integer $s$ with $0\le s\le t'-j$ such that 
$$
(24q_1^2 q_2\cdots q_js+24g_j+a+b+c,q_{j+1}q_{j+2}\cdots q_{t'})=1.
$$ 
Therefore, by a similar reasoning to Lemma \ref{lemt17}, we have $c\le 3q_1^2q_2\cdots q_j(t'-j+1)$.  
The lemma follows directly from this. 
\end{proof}

\begin{lem} \label{lemt10}
Under the assumptions given above, we have $t\le 10$.  
\end{lem}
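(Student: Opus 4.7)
The plan is to prove $t \le 10$ by showing the stronger bound $t' \le 9$; this suffices because $t \le t' + 1$. Assume for contradiction that $t' \ge 10$, so by Lemma~\ref{lemt17} we have $10 \le t' \le 17$, a finite range. For each such $t'$, I would apply Lemma~\ref{keylem} with a carefully chosen $j \in \{1, \ldots, t' - 1\}$ satisfying $0 < t' - j < q_{j+1}$. Lemma~\ref{keylem} then yields
$$q_{j+1} q_{j+2} \cdots q_{t'} \;<\; 27\, q_1^{\,5}\, q_2^{\,2} \cdots q_j^{\,2}\,(t' - j + 1)^3,$$
and the contradiction comes from showing this inequality fails.

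The choice of $j$ is driven by the lower bounds $q_i \ge r_{i+1}$, which give in particular $q_3 \ge 11$, $q_4 \ge 13$, and $q_5 \ge 17$. These allow the uniform assignments $j = 2$ when $t' \le 12$, $j = 3$ when $13 \le t' \le 15$, and $j = 4$ when $t' \in \{16, 17\}$; often a smaller $j$ is available when $q_2$ happens to be large (for example, $j = 1$ whenever $q_2 \ge t'$). Verification proceeds by a case analysis on the small primes $q_1, q_2$, the principal split being $(q_1, q_2) = (5, 7)$ versus $q_2 \ge 11$. A representative check: for $t' = 10$, $j = 2$, $q_1 = 5$, $q_2 = 7$, the left-hand side is at least $11 \cdot 13 \cdot 17 \cdot 19 \cdot 23 \cdot 29 \cdot 31 \cdot 37 \approx 3.5 \times 10^{10}$, whereas the right-hand side is $27 \cdot 5^5 \cdot 7^2 \cdot 9^3 \approx 3.0 \times 10^{9}$, a clear violation of the inequality.

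The main obstacle is that the primes $q_1, \ldots, q_j$ appearing on the right are unbounded a priori, so the case analysis must cover all admissible configurations. The unifying observation is that our choices of $j$ all satisfy $t' - j > 2j + 3$: since the left-hand side is a product of $t' - j$ distinct primes (each exceeding $q_j$) while the right-hand side has total prime-degree $2j + 3$ in the $q_i$'s, the ratio of the two sides grows as the primes enlarge. This reduces the verification to the finite list of extremal configurations in which $q_i$ attains its smallest possible value $r_{i+1}$, each of which is handled by a direct numerical computation of the kind illustrated above.
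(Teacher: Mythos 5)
Your high-level strategy is the same as the paper's: assume $t'\ge 10$, invoke Lemma \ref{lemt17} to confine $t'$ to $[10,17]$, apply Lemma \ref{keylem} with a $j$ for which $0<t'-j<q_{j+1}$ is guaranteed by $q_i\ge r_{i+1}$, and derive a numerical contradiction; the conclusion $t\le t'+1\le 10$ is also how the paper finishes. Your admissibility checks for $j=2,3,4$ and your sample computation for $(q_1,q_2)=(5,7)$, $t'=10$ are correct.

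The gap is in the ``unifying observation.'' The claim that the ratio of the two sides grows as the primes enlarge is false: the right-hand side $27\,q_1^5q_2^2\cdots q_j^2(t'-j+1)^3$ contains $q_1^5$, so increasing $q_1$ alone multiplies the right side by a fifth power while leaving $q_{j+1}\cdots q_{t'}$ untouched. Consequently the reduction to the single extremal configuration $q_i=r_{i+1}$ does not cover all cases. Concretely, take $t'=10$, $j=2$, $q_1=11$, $q_2=13$ (so $q_{i}\ge r_{i+3}$): the left side is at least $17\cdot19\cdot23\cdot29\cdot31\cdot37\cdot41\cdot43\approx 4.4\times10^{11}$, but the right side is $27\cdot11^5\cdot13^2\cdot9^3\approx 5.4\times10^{11}$, so no contradiction arises --- and this is already the \emph{minimal} configuration with $q_1=11$. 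The same phenomenon recurs for $j=3,4$, where $q_3,q_4$ also sit on the right side and are not controlled by a case split on $q_1,q_2$ alone. Your parenthetical remark that $j=1$ is available when $q_2\ge t'$ points at the correct repair, but you never organize the argument around it. The paper does exactly this: it cases on $q_1\in\{5,7,11\}$ versus $q_1\ge13$, uses the large-$j$ inequality only to get a first crude bound on $t'$, and then iterates, dropping to smaller $j$ (ultimately $j=1$, where only $q_1^5$ appears against $t'-1$ primes each exceeding $q_1$) as the improving bound on $t'$ makes the hypothesis $t'-j<q_{j+1}$ available. Without that descent in $j$, your case analysis leaves infinitely many configurations unverified.
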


\begin{proof} By Lemma \ref{lemt17}, we may assume that $t'\le 17$. 
First, assume that $q_1 \ge 13$. Since $t'-1<17\le q_2$, 
we may apply Lemma \ref{keylem} so that
$$
q_1q_2\cdots q_{t'}<(3q_1^2 t')^3.
$$
From this, one may easily show that $t'\le 8$. 

Now, assume that $q_1=11$. Since $t'-2<17\le q_3$, we may apply Lemma \ref{keylem} so that we may conclude that
$$
q_1q_2\cdots q_{t'}<\left(3q_1^2q_2(t'-1)\right)^3.
$$
Suppose that $t'\ge 11$. Since $r_8=23, r_9=29, r_{10}=31,\dots$, one may directly show that
$$
11\cdot r_8r_9\cdots r_{t'+3}>\left(3\cdot 11^2\cdot (t'-1)\right)^3.
$$
Since $q_i\ge r_{i+3}$ for any $i$, we have
$$
q_1q_2\cdots q_{t'}>11q_2^3 r_8r_9\cdots r_{t'+3}>\left(3\cdot 11^2\cdot q_2\cdot (t'-1)\right)^3,
$$
which is a contradiction. Therefore we have $t'\le 10$.
Now, since $t'-1<13\le q_2$,  we deduce, similarly to the above, that
$$
q_1q_2\cdots q_{t'}<(3q_1^2t')^3,
$$
and thus $t'\le 7$.

Assume that $q_1=7$. Since $t'-3<17\le q_4$ in this case,
one may deduce that
$$
q_1q_2\cdots q_{t'}<\left(3q_1^2q_2q_3(t'-2)\right)^3,
$$
and thus we have $t'\le 12$. Now, since $t'-2<13\le q_3$, we may have
$$
q_1q_2\cdots q_{t'}<\left(3q_1^2q_2(t'-1)\right)^3,
$$
and hence $t'\le 9$.  Since $t'-1<11\le q_2$,  
$$
q_1q_2\cdots q_{t'}<\left(3q_1^2t'\right)^3.
$$
Therefore, we have $t'\le 7$.

Finally, assume that $q_1=5$. Since $t'-4<17\le q_5$, 
we have 
$$
q_1q_2\cdots q_{t'}<\left(3q_1^2q_2q_3q_4(t'-3)\right)^3 \ \  \text{and thus $t'\le 14$.}
$$
Now, since $t'-3<13\le q_4$, we have 
$$
q_1q_2\cdots q_{t'}<\left(3q_1^2q_2q_3(t'-2)\right)^3 \ \  \text{and $t'\le 12$.}
$$ 
Then, since $t'-2<11\le q_3$, we have
$$
q_1q_2\cdots q_{t'}<\left(3q_1^2q_2(t'-1)\right)^3, \    \  \text{and finally we have $t'\le 9$. }
$$
The lemma follows directly from this.
\end{proof}

Recall that we are assuming that $\Delta(a,b,c)$ is stable. Hence for any odd prime $p$,
$$
\langle 1,-1 \rangle \longrightarrow \langle a,b,c \rangle   \ \ \text{over $\z_p$}
\quad \text{or} \quad 
\langle a,b,c \rangle \simeq \langle 1,-\Delta_p \rangle \perp \langle p\epsilon_p \rangle \ \ \text{over $\z_p$},  
$$
for some $\epsilon_p \in \z_p^{\times}$.
In the former case, every element in $\z_p$ is represented by $\langle a,b,c\rangle$ over $\z_p$. In the latter case,
$$
\{\gamma \in \z_p : \gamma \nrightarrow \langle a,b,c\rangle \ \ \text{over} \ \ \z_p \} =\left\{p^{2w-1}\delta_p : w\in \n ,\ \delta_p\in \z_p^{\times},\ \delta_p\epsilon_p\not\in \left(\z_p^{\times}\right)^2\right\} .
$$
Recall that $r_j$ is the $j$-th odd prime. Let $u$ be a positive integer not divisible by $r_j$ and let $v$ be an integer. 
Let $\eta_{r_j} \in \{1,\Delta_{r_j}\}$.  For a positive integer $i$, we define 
$$
\Psi_{u,v}(i,j;\eta_{r_j})=\left\vert \left\{ un+v : 1\le n\le i,\ un+v\nrightarrow \langle 1,-\D_{r_j} \rangle \perp \langle \eta_{r_j}\cdot r_j \rangle \ \ \text{over}\ \ \z_{r_j} \right\} \right\vert.
$$
We also define 
$$
\Psi_{u,v}(i,j)=\max\{\Psi_{u,v}(i,j;1),\Psi_{u,v}(i,j;\Delta_{r_j})\}.
$$ 
 Let $i={b_{e-1}b_{e-2}\dots b_0}_{(r_j)}$ be the base-$r_j$ representation of $i$, that is, 
 $$
 i=b_{e-1}r_j^{e-1}+b_{e-2}r_j^{e-2}+\cdots +b_0
 $$ 
 with $0\le b_{\nu}<r_j$ for $\nu =1,2,\dots, e-1$ and $b_{e-1}>0$. 
We define 
$$
\epsilon_{i,j}(k)=\begin{cases} 0 \quad &\text{if $i \equiv 0 \Mod{r_j^{2k-1}}$},\\ 
                                1 \quad &\text{if $i\not \equiv 0 \Mod{r_j^{2k-1}}$}.\\
                                \end{cases}
$$                                
We also define
$$
\psi_{i,j}(k)=\begin{cases} \min\left(b_{2k-1}+\epsilon_{i,j}(k),\frac{r_j-1}2\right) \quad &\text{if $k<\left[\frac{e+1}2\right]$}, \\[0.6em]
                             \min\left(b_{2\delta-1}+\epsilon_{i,j}(\delta),\frac{r_j+1}2\right) \quad &\text{if $e=2\delta$ and $k=\delta$}, \\[0.6em]
                             1 \quad &\text{if $e=2\delta-1$ and $k=\delta$}. \\
                              \end{cases}
$$                              

\begin{lem} \label{upperbound} Under the notations and assumptions given above, we have 
$$
\Psi_{u,v}(i,j)\le  \displaystyle\sum_{k=1}^{\delta} \frac{r_j-1}2\left[\frac{i}{r_j^{2k}}\right]+\psi_{i,j}(k).
$$
\end{lem}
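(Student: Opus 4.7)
The plan is to decompose $\Psi_{u,v}(i,j;\eta_{r_j})$ according to the $r_j$-adic valuation of $un+v$. First I would record that a $\z_{r_j}$-integer $c$ fails to be represented by $\langle 1,-\Delta_{r_j}\rangle\perp\langle\eta_{r_j}r_j\rangle$ precisely when $\ord_{r_j}(c)=2k-1$ for some $k\geq 1$ and $c/r_j^{2k-1}\pmod{r_j}$ lies in a fixed ``bad'' coset of $(\z/r_j\z)^\times$ of size $(r_j-1)/2$. Letting $B_k$ count the $n\in\{1,\ldots,i\}$ whose image $un+v$ is bad at level $k$, we have $\Psi_{u,v}(i,j;\eta_{r_j})=\sum_{k\geq 1}B_k$, and the task reduces to dominating each $B_k$ by the matching term on the right-hand side, uniformly in $\eta_{r_j}\in\{1,\Delta_{r_j}\}$.

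Next I would partition $\{1,\ldots,i\}$ into $\lfloor i/r_j^{2k}\rfloor$ complete intervals of length $r_j^{2k}$ and one leftover of length $L_k=b_{2k-1}r_j^{2k-1}+\cdots+b_0$. Since $\gcd(u,r_j)=1$, the bad $n$'s form exactly $(r_j-1)/2$ residue classes modulo $r_j^{2k}$, so each complete interval contributes exactly $(r_j-1)/2$ bad $n$'s, accounting for the main term $\frac{r_j-1}{2}\lfloor i/r_j^{2k}\rfloor$. It remains to bound the leftover contribution $R_k$. For $k<\delta$, a bad $n$ in the leftover must satisfy $un+v\equiv 0\pmod{r_j^{2k-1}}$, lying on an arithmetic sub-progression of common difference $r_j^{2k-1}$; its intersection with $\{1,\ldots,L_k\}$ has either $b_{2k-1}$ or $b_{2k-1}+1$ elements, the latter case being possible only when the trailing part $b_{2k-2}r_j^{2k-2}+\cdots+b_0$ is nonzero, i.e., when $\epsilon_{i,j}(k)=1$. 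Along this sub-progression the quantity $(un+v)/r_j^{2k-1}$ traces distinct residues modulo $r_j$, at most $(r_j-1)/2$ of which lie in the bad coset, yielding $R_k\leq\min(b_{2k-1}+\epsilon_{i,j}(k),\frac{r_j-1}{2})=\psi_{i,j}(k)$.

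Finally I would handle the top level $k=\delta$, where $\psi_{i,j}(\delta)$ must absorb both its leftover and every higher level $k'>\delta$. If $e=2\delta-1$, then $r_j^{2\delta-1}>i$ forces at most one $n\in\{1,\ldots,i\}$ with $r_j^{2\delta-1}\mid un+v$, so $\sum_{k\geq\delta}B_k\leq 1=\psi_{i,j}(\delta)$. If $e=2\delta$, I would repeat the sub-progression analysis at level $\delta$ but now also admit $(un+v)/r_j^{2\delta-1}\equiv 0\pmod{r_j}$, which is the only mechanism by which higher levels $k'>\delta$ can contribute. Since the $\leq b_{2\delta-1}+\epsilon_{i,j}(\delta)\leq r_j$ residues produced along the sub-progression are distinct, at most one equals $0$, so the union of the bad coset with $\{0\}$, of size $(r_j+1)/2$, bounds $\sum_{k\geq\delta}B_k\leq\min(b_{2\delta-1}+\epsilon_{i,j}(\delta),\frac{r_j+1}{2})=\psi_{i,j}(\delta)$. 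Summing over $k=1,\ldots,\delta$ then yields the lemma. The main obstacle is this top-level consolidation: one has to verify that the single ``zero-residue'' candidate at level $\delta$ really captures all potential bad contributions from higher levels, with no double counting, so that a single $\psi_{i,j}(\delta)$ suffices in place of a longer tail.
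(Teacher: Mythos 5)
Your proposal is correct and follows essentially the same route as the paper's proof: decompose by the $r_j$-adic level $2k-1$, get exactly $\frac{r_j-1}{2}$ bad terms per full period of length $r_j^{2k}$, bound the leftover by $\min\bigl(b_{2k-1}+\epsilon_{i,j}(k),\frac{r_j-1}{2}\bigr)$ using distinctness of the residues $(un+v)/r_j^{2k-1}$ along the sub-progression, and absorb all levels $k\ge\delta$ into the single $\psi_{i,j}(\delta)$ term via the coset-union of size $\frac{r_j+1}{2}$ (or the single multiple of $r_j^{2\delta-1}$ when $e=2\delta-1$). The only cosmetic differences are that the paper first reduces to $u=1$ and phrases the count in terms of multiples of $r_j^{2k-1}$ in the shifted window $\{1+v,\dots,i+v\}$, whereas you work with residue classes of $n$ modulo $r_j^{2k}$ directly.
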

\begin{proof} Since both cases can be done in a similar manner, we only provide the proof of the case when $e=2\delta$ for some positive integer $\delta$. Without loss of generality, we may assume that $u=1$. We have to show that the number of integers of the form $r_{j}^{2k-1}\eta_{r_j}$  ($r_{j}^{2k-1}\eta_{r_j}'$) in the set $\{1+v,2+v,\dots,i+v\}$ is less than or equal to the right hand side, where $\eta_{r_j}$ ($\eta_{r_j}'$) is a square (nonsquare, respectively) in $\z_{r_j}^{\times}$.

For any integer $k$ such that $1\le k\le \delta$, let 
$$
i=r_{j}^{2k-1}(r_j{\alpha_k}+b_{2k-1})+\beta_k, \quad  (0\le \beta_k \le r_j^{2k-1}-1).
$$
Let $r_j^{2k-1}(x+1)$ be the smallest integer greater than $v$ that is divisible by $r_j^{2k-1}$. Then any integer in the set
$\{r_j^{2k-1}(x+s) : 1\le s\le r_j\alpha_k+b_{2k-1}\}$ is less than or equal to $i+v$. Note that there is at most one more integer other than these integers that is divisible by $r_j^{2k-1}$, and that is less than or equal to $i+v$. Note that such an integer exists only when $\epsilon_{i,j}(k)\ne 0$ (or $\beta_k \ne 0$). Furthermore, if such an integer exists, then it must be $r_j^{2k-1}(x+r_j\alpha_k+b_{2k-1}+1)$. 
Note that there are exactly $\frac{r_j-1}2$ quadratic residues and $\frac{r_j-1}2$ quadratic non-residues in the consecutive $r_j$ integers. 
Therefore there are exactly   $\frac{r_j-1}2\alpha_k$  quadratic residues and  $\frac{r_j-1}2\alpha_k$  quadratic non-residues in 
$$
\{r_j^{2k-1}(x+s) : 1\le s\le r_j\alpha_k\}.
$$ 
Note that $\alpha_k=\left[\frac{i}{r_j^{2k}}\right]$ for any $1\le k\le \delta$. The remaining multiples of $r_j^{2k-1}$ are  contained in
$$
\{r_j^{2k-1}(x+r_j\alpha_k+1), r_j^{2k-1}(x+r_j\alpha_k+2),\dots,  r_j^{2k-1}(x+r_j\alpha_k+b_{2k-1}+\epsilon_{i,j}(k))\}.
$$
Among them, there are at most $\psi_{i,j}(k)$ quadratic residues, and at most $\psi_{i,j}(k)$ quadratic non-residues. Note that there is at most one multiple of $r_j^{2\delta+1}$ in $\{1+v,2+v,\dots,i+v\}$ which is, if exists, contained in the set  
$$
\{r_j^{2\delta-1}(x+1), r_j^{2\delta-1}(x+2),\dots,  r_j^{2\delta-1}(x+b_{2\delta-1}+\epsilon_{i,j}(\delta))\}.
$$
 Note that there are at most $\psi_{i,j}(\delta)$ quadratic residues or a multiple of $r_j$, and at most  $\psi_{i,j}(\delta)$ quadratic non-residues or a multiple of $r_j$ in the set $\{x+1,x+2,\dots,x+b_{2\delta-1}+\epsilon_{i,j}(\delta)\}$.  The lemma follows from this. 
\end{proof}

For the sake of brevity, we let
$$
a_{ij}=\displaystyle\sum_{k=1}^{\delta} \frac{r_j-1}2\left[ \frac{i}{r_j^{2k}}\right] +\psi_{i,j}(k)
$$
for positive integers $i$ and $j$.

\begin{rmk} \label{simple}
One may easily show that $a_{ij}\le \left\lceil \displaystyle\frac{i}{r_j}\right\rceil$ for any positive integers $i$ and $j$, where $\left\lceil\cdot\right\rceil$ is the ceiling function.
It is a little bit complicate to compute an upper bound of $\Psi_{u,v}(i,j)$ by using Lemma \ref{upperbound}. Instead of that, one may easily show that
$$
\Psi_{u,v}(i,j) \le \displaystyle\frac{r_j+1}2\left\lceil \displaystyle\frac i{{r_j}^2}\right\rceil.
$$

\end{rmk}

Recall that $T$ is the set of all odd primes at which $\langle a,b,c\rangle$ is anisotropic, and $\vert T\vert =t\le 10$ by Lemma \ref{lemt10}.

\begin{lem} \label{lembound4}
Let $i$ be a positive integer. For any integer $s>t$, we define $b_{ij}(s)=\text{max}\left(a_{ij},\left\lceil \displaystyle\frac{i}{r_s} \right\rceil \right)$ for $j=1,2,\dots,s-1$. Then we have
$$
\vert \{ 1\le n\le i : 8n+a+b+c\overset{2}{\longrightarrow} \langle a,b,c\rangle \} \vert \ge i-b_{i,1}(s)-b_{i,2}(s)-\cdots -b_{i,s-1}(s).
$$
\end{lem}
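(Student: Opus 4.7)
The plan is to bound from above, at each odd prime where $\langle a,b,c\rangle$ is anisotropic, the number of $n\in\{1,\dots,i\}$ for which $8n+a+b+c$ fails to be represented locally, and then distribute these prime-by-prime contributions among the $s-1$ index slots appearing on the right-hand side.

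First I reduce the problem to a local one at odd primes. By the equivalences (i)--(iv) in the Preliminaries, and because primitive triangular forms are universal over $\z_2$, regularity of $\D(a,b,c)$ yields: $8n+a+b+c \overset{2}{\longrightarrow} \langle a,b,c\rangle$ iff $8n+a+b+c$ is represented by $\langle a,b,c\rangle$ over $\z_p$ for every odd prime $p$ (the odd-component condition is vacuous at odd $p$ since $2\in\z_p^{\times}$). Stability of $\D(a,b,c)$ then ensures that at every odd prime $p\notin T$ the localization $\langle a,b,c\rangle_p$ contains the hyperbolic plane $\langle 1,-1\rangle$ and so represents every element of $\z_p$; hence only primes in $T$ can obstruct $n$ from being good.

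At each $p=r_k\in T$, stability forces $\langle a,b,c\rangle_{r_k}\simeq\langle 1,-\D_{r_k}\rangle\perp\langle r_k\e_{r_k}\rangle$ for some $\e_{r_k}\in\z_{r_k}^{\times}$, so the number of bad $n\in\{1,\dots,i\}$ at $r_k$ is at most $\Psi_{8,\,a+b+c}(i,k)$, which is bounded above by $a_{i,k}$ via Lemma \ref{upperbound} and also by $\lceil i/r_k\rceil$ via Remark \ref{simple}. Since $|T|=t<s$, there is an injection $\iota\colon T\hookrightarrow\{1,\dots,s-1\}$ that sends every $r_k\in T$ with $k\le s-1$ to $k$ itself and sends each large prime $r_k\in T$ with $k\ge s$ to any unused slot (there are $s-1-t_1\ge t-t_1$ of them, where $t_1$ is the number of small primes in $T$). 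A small prime $r_k$ contributes at most $a_{i,k}\le b_{i,k}(s)$ at its slot, and a large prime $r_k\ge r_s$ contributes at most $\lceil i/r_k\rceil\le\lceil i/r_s\rceil\le b_{i,\iota(r_k)}(s)$ at its slot.

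Summing these contributions over $T$ and padding with the non-negative terms $b_{i,j}(s)$ for $j$ outside the image of $\iota$ gives
$$
\#\bigl\{n\in\{1,\dots,i\}:8n+a+b+c\overset{2}{\nrightarrow}\langle a,b,c\rangle\bigr\}\ \le\ \sum_{j=1}^{s-1}b_{i,j}(s),
$$
which is equivalent to the claim. I do not foresee any real obstacle: the argument is essentially bookkeeping once the local reduction via regularity and stability is in place. The one subtle point is the injective assignment of $T$ into $\{1,\dots,s-1\}$, which is precisely what motivates the hybrid definition $b_{i,j}(s)=\max(a_{i,j},\lceil i/r_s\rceil)$ and the hypothesis $s>t$.
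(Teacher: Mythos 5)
Your proposal is correct and follows essentially the same route as the paper: reduce to local obstructions at the anisotropic primes in $T$ via regularity and stability, bound the bad count at each such prime by $a_{i,k}$ (Lemma \ref{upperbound}) or $\lceil i/r_k\rceil\le\lceil i/r_s\rceil$ (Remark \ref{simple}), and absorb the primes of index $\ge s$ into the unused slots using $s>t$. Your explicit injection $\iota$ is just a restatement of the paper's comparison $|J_2|\le|J_3|$, so nothing differs in substance.
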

\begin{proof}
Let $s$ be any integer greater than $t$ and let $J=\{ j\in \n : r_j\in T\}$. We also let $J_1=\{j\in J : j\le s-1\}$, $J_2=J - J_1$, and $J_3=\{1,2,\dots,s-1\}-J_1$. Note that $\vert J_2\vert \le \vert J_3\vert$ and for any $j \in J_3$, $\displaystyle \left\lceil \frac i{r_s}\right\rceil\le b_{ij}(s)$ by assumption.   
From Remark \ref{simple}, for any $j\in J_2$, we have $a_{ij}\le \left\lceil \displaystyle\frac{i}{r_j} \right\rceil \le \left\lceil \displaystyle\frac{i}{r_s} \right\rceil$.
Thus we have
$$
\begin{array}{rl}
\displaystyle\sum_{j\in J} a_{i,j}=\!\!\!\displaystyle\sum_{j_1\in J_1} a_{i,j_1}+\displaystyle\sum_{j_2\in J_2} a_{i,j_2} &\le \displaystyle\sum_{j_1\in J_1} a_{i,j_1}+\vert J_2 \vert \cdot \left\lceil \displaystyle\frac{i}{r_s} \right\rceil \\
&\le 
\displaystyle\sum_{j_1\in J_1} b_{i,j_1}(s)+\sum_{j_3\in J_3}b_{i,j_3}(s) \le
 \displaystyle\sum_{j=1}^{s-1} b_{i,j}(s).
\end{array}
$$
Since $\D(a,b,c)$ is stable regular,  we have
$$
\begin{array}{l}
\vert \{ 1\le n\le i : 8n+a+b+c\overset{2}{\longrightarrow} \langle a,b,c\rangle \} \vert \\[0.5em]
\hskip 5pc =\vert \{ 1\le n\le i : 8n+a+b+c\longrightarrow \gen(\langle a,b,c\rangle) \} \vert \\[0.5em]
\hskip 5pc \ge i-\displaystyle\sum_{j\in J} a_{i,j} \ge i-\displaystyle\sum_{j=1}^{s-1} b_{i,j}(s).
\end{array}
$$
This completes the proof.
\end{proof}

\begin{rmk} In the remaining of this section, we need the exact values of $a_{ij}$'s for some integers $i$ and $j$.  We provide some of these values in Table 1 below.
\end{rmk}

\begin{table}[ht]
\caption{Some values of $a_{ij}$}
\begin{tabular}{|c|c|c|c|c|c|c|c|c|c|c|c|}
\hline
\diagbox[width=3em,height=1.7em]{$i$}{$j$}
&1&2&3&4&5&6&7&8&9&10&11 \\
\hline
1&   1&1&1&1&1&1&1&1&1&1&1 \\
\hline
2&   1&1&1&1&1&1&1&1&1&1&1 \\
\hline
3&   1&1&1&1&1&1&1&1&1&1&1 \\
\hline
4&   2&1&1&1&1&1&1&1&1&1&1 \\
\hline
5&   2&1&1&1&1&1&1&1&1&1&1 \\
\hline
6&   2&2&1&1&1&1&1&1&1&1&1 \\
\hline
7&   2&2&1&1&1&1&1&1&1&1&1 \\
\hline
9&   2&2&2&1&1&1&1&1&1&1&1 \\
\hline
19&  4&3&3&2&2&2&1&1&1&1&1 \\
\hline
20&  4&3&3&2&2&2&2&1&1&1&1 \\
\hline
25&  4&3&4&3&2&2&2&2&1&1&1 \\
\hline
26&  4&4&4&3&2&2&2&2&1&1&1 \\
\hline
29&  6&4&4&3&3&2&2&2&1&1&1 \\
\hline
32&  6&5&4&3&3&2&2&2&2&2&1 \\
\hline
35&  6&5&4&4&3&3&2&2&2&2&1 \\
\hline
41&  7&5&4&4&4&3&3&2&2&2&2 \\
\hline
47&  8&5&4&5&4&3&3&3&2&2&2 \\
\hline
49&  8&5&4&5&4&3&3&3&2&2&2 \\
\hline
83& 13&9&7&6&7&5&5&4&3&3&3 \\
\hline
314& 41&29&22&16&13&11&10&12&11&11&9 \\
\hline
\end{tabular}
\end{table}

\begin{lem} \label{lemt7}
Under the assumptions given above, we have $t\le 7$.
\end{lem}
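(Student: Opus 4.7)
The plan is to rerun the argument of Lemma~\ref{lemt10}, but with the Chinese-remainder estimates of Lemmas~\ref{lembound1} and~\ref{lembound2} replaced by the sharper counting of Lemmas~\ref{upperbound} and~\ref{lembound4}. Assume $t\ge 8$ for contradiction and, as before, write $T'=\{q_1<\cdots<q_{t'}\}$ with $q_1\ge 5$.

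First I would apply Lemma~\ref{lembound3} with $p=q_1$ and $d=24$ to obtain $g$ with $0<g<q_1^{\,2}$ such that, along the arithmetic progression $k=q_1^{\,2}h+g$ ($h\ge 0$), the value $24k+a+b+c$ is locally represented at $q_1$ while $24k+a+b\nra\langle a,b\rangle$ over $\z_{q_1}$. Because $3\mid 24$, the residue $24k+a+b+c\equiv a+b+c\pmod 3$ is represented by $(1,1,1)$, so the prime $3$ is handled automatically, and only the anisotropic primes $q_2,\ldots,q_{t'}$ remain to be controlled.

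Next, apply Lemma~\ref{upperbound} to $\{24q_1^{\,2}h+(24g+a+b+c):1\le h\le H\}$ at each $q_j$ with $j\ge 2$; since $q_j\ne 3,q_1$, the multiplier $24q_1^{\,2}$ is coprime to $q_j$. Summing exactly as in Lemma~\ref{lembound4} (and invoking Remark~\ref{simple} whenever $\mathrm{idx}(q_j)$ exceeds the table) shows that a good $h\in\{1,\ldots,H\}$ exists whenever
$$
H>\sum_{j=2}^{t'}a_{H,\,\mathrm{idx}(q_j)},
$$
where $\mathrm{idx}(q_j)$ denotes the position of $q_j$ among the odd primes. For such an $h$, regularity and stability yield odd integers $x,y,z$ with $ax^2+by^2+cz^2=24k+a+b+c$, and the $q_1$-condition forces $|z|\ge 3$, so $c\le 3k\le 3q_1^{\,2}(H+1)$. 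Combining with $q_1q_2\cdots q_t\mid abc\le c^3$ then gives
$$
q_1q_2\cdots q_t\le\bigl(3q_1^{\,2}(H+1)\bigr)^{3}.
$$

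The proof is completed by a case analysis on $q_1\in\{5,7,11,13,\ldots\}$ exactly in the spirit of Lemma~\ref{lemt10}: for each $q_1$ and each assumed value $t\ge 8$, pick the smallest admissible $H$ using the explicit entries of Table~1 in the extremal configuration where the remaining $q_j$'s are as small as possible, and check that the resulting $(3q_1^{\,2}(H+1))^{3}$ is strictly smaller than the forced product. The principal obstacle is the subcase $q_1=5$: there the remaining anisotropic primes have small indices in the odd-prime list and the values $a_{H,j}$ decay only mildly with $j$, so $H$ must be tuned with considerable care against Table~1, and in borderline configurations one may have to supplement the above scheme by invoking the anisotropy of $\langle a,b\rangle$ at a second prime to force $|z|\ge 5$ in some representation, thereby sharpening the bound to $c\le k$. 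The subcases $q_1\ge 7$ are then straightforward numerical checks relying only on the uniform estimate $a_{H,j}\le\lceil H/r_j\rceil$ of Remark~\ref{simple} together with $q_j\ge r_{j+\mathrm{idx}(q_1)}$.
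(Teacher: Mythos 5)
Your overall framework (Lemma \ref{lembound3} at $q_1$ to force $z^2\ge 9$, a counting argument over the remaining anisotropic primes to produce a small good $k$, hence $c\le 3k$, then a comparison with $q_1q_2\cdots q_{t'}$) is the right skeleton, but it is missing the one new ingredient that actually makes the paper's proof close, and as a result the numerics do not work. Your final inequality is $q_1\cdots q_{t'}\le\bigl(3q_1^{2}(H+1)\bigr)^{3}$, and since each $a_{H,j}\ge 1$ you necessarily have $H\ge t'$, so this is essentially the same bound $c\le 3q_1^{2}t'$ that Lemma \ref{keylem} with $j=1$ already gives; the finer counting of Lemma \ref{upperbound} buys nothing here. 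Concretely, for $t'=7$ and $q_1=7$ one gets $c\le 3\cdot 49\cdot 8=1176$ and $c^{3}\approx 1.6\times 10^{9}$, whereas $7\cdot 11\cdot 13\cdot 17\cdot 19\cdot 23\cdot 29\approx 2.2\times 10^{8}$, so there is no contradiction — contrary to your claim that the subcases $q_1\ge 7$ are straightforward. The same failure occurs for $q_1=5$, $t'=7$.

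What the paper does instead is first prove $a\le 25$: applying Lemma \ref{lembound4} with $i=25$ and $s=11$ produces some $n_1\le 25$ with $8n_1+a+b+c\overset{2}{\longrightarrow}\langle a,b,c\rangle$, whence $9a+b+c\le 8n_1+a+b+c$ and $a\le 25$. This is then fed into the sharper form of Lemma \ref{keylem}, $q_1\cdots q_{t'}<a\bigl(3q_1^{2}q_2\cdots q_j(t'-j+1)\bigr)^{2}\le 25\bigl(\cdots\bigr)^{2}$, replacing one factor of $c\sim 3q_1^{2}t'\sim 10^{3}$ by $25$; with that gain of roughly a factor of $40$ the case analysis on $q_1\in\{5,7,11,\dots\}$ does force $t'\le 6$. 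Your fallback for the hard cases — forcing $|z|\ge 5$ to get $c\le k$ — is not substantiated: ruling out $z^{2}=9$ requires showing $24k+a+b-8c\nrightarrow\langle a,b\rangle$ at some anisotropic prime, and that condition depends on the residue of the unknown $c$ modulo $p^{2}$, which you cannot arrange by choosing $k$. So the proposal has a genuine gap: without the a priori bound on $a$, the cube estimate is too weak to reach $t\le 7$.
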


\begin{proof}
By Lemma \ref{lembound4} with $i=25$ and $s=11$, one may easily show that 
$8n_1+a+b+c\overset{2}{\longrightarrow}\langle a,b,c\rangle$ for some 
$1\le n_1\le 25$. 
From our assumption of $a\le b\le c$, we have $9a+b+c\le 8n_1+a+b+c$, and thus we have
$a\le 25$. To prove the lemma, we will use Lemma \ref{keylem} repeatedly.  

First, assume that $q_1\ge 7$. Since $t'-1<11\le q_2$, we may apply  Lemma \ref{keylem} so that
$$
q_1q_2\cdots q_{t'}<25\left(3q_1^2t'\right)^2.
$$
This is possible only when $t'\le 6$.  Now, assume that $q_1=5$.  Since $t'-2<11\le q_3$, 
one may deduce that 
$$
q_1q_2\cdots q_{t'}<25\left(3q_1^2q_2(t'-1)\right)^2
$$
and thus $t'\le 7$. Finally, since $t'-1<7\le q_2$, we have
$$
q_1q_2\cdots q_{t'}<25(3q_1^2 t')^2
$$
and thus $t'\le 6$.
This completes the proof.
\end{proof}

\begin{lem} \label{a=2} For any stable regular ternary triangular form $\Delta(a,b,c)$ with $0<a\le b\le c$, we have $a=1$ or $2$.   
\end{lem}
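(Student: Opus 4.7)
The plan is to argue by contradiction: suppose $a\ge 3$. Since $a\le b\le c$ and every nonzero summand $a_iT_{x_i}$ is at least $a\ge 3$, neither $1$ nor $2$ is globally represented by $\D(a,b,c)$; by regularity neither is locally represented. Writing $A=a+b+c$ and $L=\langle a,b,c\rangle$, both $A+8$ and $A+16$ must therefore be obstructed by $L$ over some $\z_p$ with $p$ odd. Stability forces such obstructions to occur at primes in $T$, and since no odd prime divides the difference $8$, the obstructions for $n=1$ and $n=2$ must occur at distinct anisotropic primes. In particular $t\ge 2$.

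When $t\le 1$ the argument closes immediately via Lemma~\ref{lembound4}: with $i=2$ and $s=t+1\le 2$, the bound $\sum_{j=1}^{s-1}b_{2,j}(s)\le 1$ produces a good $n\in\{1,2\}$, which by the representation argument used in Lemma~\ref{lemt7} forces $a\le n\le 2$. For $t\ge 2$ the direct count from Lemma~\ref{lembound4} is too weak, and I would combine three ingredients to push through: first, the refined per-prime structure from the proof of Lemma~\ref{upperbound}, showing that at each $r_j\in T$ the obstructed $n$'s form a single residue class modulo $r_j$ (plus sparse corrections from higher odd powers), so each $r_j$ obstructs at most one of $n=1,2$; second, the iterative product bound from Lemma~\ref{keylem} applied under the additional constraint $a\ge 3$; and third, the upper bound $t\le 7$ from Lemma~\ref{lemt7} together with the observation that for $a\ge 4$ the integers $3,4,\ldots,a-1$ must also all fail to be locally represented, each requiring a distinct obstructing prime whenever its distance from other obstructed $n$'s is less than every $r_j\in T$.

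The main obstacle is the tight residual case where both $n=1$ and $n=2$ are obstructed by two distinct primes in $T$ and the stability conditions at each $r_j\in T$ happen to be precisely compatible with the observed residues of $A$ modulo $r_j$. Here one must do delicate bookkeeping of the unit classes $\epsilon_{r_j}$ and the values $\Delta_{r_j}$ at each $r_j\in T$, combining these with the product bound of Lemma~\ref{keylem} to reduce to a finite list of candidate triples $(a,b,c)$ with $a\in\{3,\ldots,25\}$ (the range left open after Lemma~\ref{lemt7}), which can then be eliminated one by one. Table~1 and the upper bounds on the $a_{i,j}$ recorded in the remark following Lemma~\ref{upperbound} provide the combinatorial input that keeps this case analysis finite.
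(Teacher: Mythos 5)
Your opening observation is sound as far as it goes: if $a\ge 3$ then $1$ and $2$ are not represented, hence by regularity not locally represented, and since any odd prime obstructing both $8+a+b+c$ and $16+a+b+c$ would divide $8$, you correctly conclude $t\ge 2$; and your treatment of the case $t\le 1$ via Lemma~\ref{lembound4} with $i=2$, $s=2$ is fine. But the entire main case $t\ge 2$ is a plan rather than a proof, and the plan does not close. The assertion that the obstructed $n$'s at $r_j$ ``form a single residue class modulo $r_j$'' is not what is needed (and is not quite accurate: roughly half of the class $s_n\equiv 0\Mod{r_j}$ is obstructed, with higher-power corrections); more importantly, knowing that each anisotropic prime obstructs at most one of $n=1,2$ yields no contradiction, since $t\ge 2$ is perfectly consistent with stability. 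Your appeal to Lemma~\ref{keylem} to ``reduce to a finite list'' also fails: that lemma produces a contradiction only when $t'$ is large, and for small $t'$ (e.g.\ $t'\le 2$) it gives no upper bound on $c$ whatsoever, so no finite search space results. Finally, the range $a\in\{3,\dots,25\}$ you quote is the crude bound from Lemma~\ref{lemt7}, and you never sharpen it.

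The paper closes the gap with two explicit counting comparisons that are absent from your proposal. First, every $n\le 3a-1$ with $s_n\overset{2}{\longrightarrow}\langle a,b,c\rangle$ must come from a representation with all three squares in $\{1,9\}$ and not all equal to $9$, so there are at most $6$ such $n$; Lemma~\ref{lembound4} with $i=32$, $s=8$ guarantees at least $7$ such $n$ in $[1,32]$, forcing $a\le 10$. Second, for each $a\in\{3,\dots,10\}$ one compares $u_k=\vert\{n<\frac{k^2-1}{8}a: s_n\overset{2}{\longrightarrow}\langle a,b,c\rangle\}\vert$ (bounded below by Lemma~\ref{lembound4}) with $v_k=\vert\{n: s_n-c\overset{2}{\longrightarrow}\langle a,b\rangle\}\vert$ (bounded above by counting lattice points of the binary section, independently of $c$); whenever $v_k<u_k$ some representation must use $z^2\ge 9$, which yields $c\le\frac{k^2-1}{8}a-1$. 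Only after both $a$ and $c$ are explicitly bounded does a finite (computer-assisted) verification eliminate $3\le a\le 10$. Without these two counting steps your argument cannot produce a finite case analysis, so the proof as proposed has a genuine gap.
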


\begin{proof}
For any positive integer $n$, we define $s_n=8n+a+b+c$. 
Since
$$
\begin{array}{l}
\{ s_n : s_n<25a+b+c,\ s_n\overset{2}{\longrightarrow} \langle a,b,c\rangle \} \\[0.4em]
\subset \{ 9a+b+c,\ a+9b+c,\ a+b+9c,\ 9a+9b+c,\ 9a+b+9c,\ a+9b+9c\},
\end{array}
$$
we have
$$
\vert \{ 1\le n\le 3a-1 : s_n\overset{2}{\longrightarrow} \langle a,b,c\rangle \} \vert \le 6.
$$
On the other hand, by Lemma \ref{lembound4} with $i=32$ and $s=8$, one may check that
$$
\vert \{ 1\le n\le 32 : s_n\overset{2}{\longrightarrow} \langle a,b,c\rangle \} \vert \ge 7.
$$
By comparing these two inequalities, we have $a\le 10$.

Now, we will show that if $3\le a \le 10$, then $c$ is bounded. 
For each positive odd integer $k$, we let
$$
\begin{array}{l}
U_k(a,b,c)=\left\{ 1\le n< \displaystyle\frac{k^2-1}8 a : s_n\overset{2}{\longrightarrow} \langle a,b,c\rangle \right\}, \\[1.1em]
V_k(a,b,c)=\left\{ 1\le n< \displaystyle\frac{k^2-1}8 a : s_n-c \overset{2}{\longrightarrow} \langle a,b\rangle \right\},
\end{array}
$$
and we also let $u_k=\vert U_k\vert$ and $v_k=\vert V_k\vert$.
Note that $V_k$ does not depend on $c$. For each integer $a$ with $3\le a\le10$, we will choose an integer $k$ suitably so that $v_k < u_k$. Note that if this inequality holds, then 
$a+b+9c \le 8(\frac{k^2-1}8a-1)+a+b+c$ and therefore, we have
$$
c\le \frac{k^2-1}8a-1.
$$  
In fact, we choose
$$
(a,k)=(10,5),(9,5),(8,7),(7,7),(6,7),(5,9),(4,13)\ \text{and}\ (3,29).
$$
Now, by using Lemma \ref{lembound4} with $i=\displaystyle\frac{k^2-1}8 a-1$ and $s=8$, one may easily compute the lower bound of $u_k$:   
\begin{table} [ht]
\begin{tabular}{|c|c|c|c|c|c|c|c|c|}
\hline
$(a,k)$ & (10,5) & (9,5) & (8,7) & (7,7) & (6,7) & (5,9) & (4,13) & (3,29) \\
\hline
$u_k$ & $\ge 5$ & $\ge 5$ & $\ge 15$ & $\ge 11$ & $\ge 8$ & $\ge 17$ &
 $\ge 31$ & $\ge 164$ \\
\hline
\end{tabular}
\end{table}

\noindent To compute an upper bound of $v_k$, note that
$$
V_k=\{ \alpha^2a+\beta^2b : a+b<\alpha^2a+\beta^2b<k^2a+b,\ \alpha \beta\equiv 1\Mod 2 \}.
$$
 Hence one may easily show that
$$
v_5\le 3,\ v_7\le 7,\ v_9\le 14,\ v_{13}\le 30 \quad \text{and}\quad v_{29}\le 161.
$$
By comparing the lower bound for $u_k$ and the upper bound for $v_k$, we have an upper bound of $c$ for each $a=3,4,\cdots,10$, as follows:

\begin{table} [ht]
\begin{tabular}{|c|c|c|c|c|c|c|c|c|}
\hline
$a$&10&9&8&7&6&5&4&3 \\
\hline
$c$ & $\le 29$ & $\le 26$ & $\le 47$ & $\le 41$ & $\le 35$ & $\le 49$ & $\le 83$ & $\le 314$ \\
\hline
\end{tabular}
\end{table}

\noindent Now, by using MAPLE program, one may check that there is no stable regular ternary triangular form $\D(a,b,c)$ for $3\le a\le 10$. Therefore, we have $a\le 2$.
\end{proof}

\begin{lem} \label{tle5}
Under the assumptions given above, we have $t\le 5$.
\end{lem}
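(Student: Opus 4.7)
The plan is to sharpen the argument of Lemma \ref{lemt7} by exploiting the now-sharp bound $a \le 2$ from Lemma \ref{a=2}. The key new input is a simple divisibility observation: since $a \in \{1,2\}$ and every $q_i \in T'$ satisfies $q_i \ge 5$, we have $\gcd(a, q_1 q_2 \cdots q_{t'}) = 1$; combined with the fact that $\langle a,b,c\rangle$ is $q_i$-stable and anisotropic over $\z_{q_i}$, which forces exactly one diagonal entry to be divisible by $q_i$, this implies each $q_i$ divides exactly one of $b$ or $c$. Hence $q_1 q_2 \cdots q_{t'} \mid bc \le c^2$, and combining with the upper bound $c \le 3 q_1^2 q_2 \cdots q_j (t'-j+1)$ obtained inside the proof of Lemma \ref{keylem} yields
\[
q_1 q_2 \cdots q_{t'} \le \bigl(3 q_1^2 q_2 \cdots q_j (t'-j+1)\bigr)^2,
\]
which is a factor of $a$ stronger than the inequality stated in Lemma \ref{keylem}.

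I will then run the same case analysis on $q_1$ as in the proof of Lemma \ref{lemt7}, but with the sharpened inequality in hand (effectively replacing the constant $25$ appearing there by $1$). For $q_1 \ge 7$, a single application with $j = 1$ (valid since Lemma \ref{lemt7} gives $t' \le 7$ and $q_2 \ge 11$) forces $t' \le 5$. For $q_1 = 5$, I will first apply with $j = 2$ to bring $t'$ down to at most $6$, then with $j = 1$ to refine to $t' \le 5$. When $3 \notin T$, this is already enough since $t = t' \le 5$.

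It remains to handle the case $3 \in T$. The $3$-adic anisotropy together with $3 \nmid a$ forces $3$ to divide exactly one of $b, c$, so the same divisibility argument upgrades to $3 q_1 \cdots q_{t'} \mid bc \le c^2$ and hence
\[
3 q_1 \cdots q_{t'} \le \bigl(3 q_1^2 q_2 \cdots q_j (t'-j+1)\bigr)^2.
\]
For $q_1 \in \{5, 7\}$ this rules out $t' = 5$ directly; for $q_1 \ge 13$ the forced lower bound $c \ge \sqrt{3 q_1 \cdots q_5}$ already exceeds the upper bound $3 q_1^2 t'$. Only $q_1 = 11$ with $t' = 5$ survives as a borderline, and there the tight interval for $c$ pins $bc$ down to the single value $3 \cdot 11 \cdot 13 \cdot 17 \cdot 19 \cdot 23$; I will then check that no divisor of this integer lies in the resulting narrow admissible interval for $b$, producing the required contradiction. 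Therefore $t' \le 4$ whenever $3 \in T$, and $t \le 5$ in every case.

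The hard part is precisely this borderline at $q_1 = 11$: the pure numerical inequality fails only by a thin margin, so the argument must be closed by an explicit divisor check on $3 \cdot 11 \cdot 13 \cdot 17 \cdot 19 \cdot 23$ inside the forced interval for $b$, rather than by a cleaner general inequality.
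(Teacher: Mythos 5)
Your argument is correct, but it takes a genuinely different route from the paper's. The paper first bounds $b$ (by $29$ when $a=2$, by $35$ when $a=1$) by comparing, via Lemma \ref{lembound4}, the number of locally represented values $s_n$ with $n\le 29$ (resp.\ $n\le 35$) against the number of those of the form $a\alpha^2+b+c$; it then manufactures a second locally represented target $h(r)=24q_1^2(r-1)+24g+2+b+c$ with $1\le r\le 7$ using Lemma \ref{lembound3}, whose construction forces $z^2\ge 9$ in any odd representation and hence $c\le 21q_1^2$, giving $q_1\cdots q_{t'}\le abc\le 1218q_1^2$ and the uniform conclusion $t'\le 4$ — so $t\le 5$ with no case split on whether $3\in T$. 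You instead upgrade the crude inequality $q_1\cdots q_{t'}\le abc$ to the divisibility $q_1\cdots q_{t'}\mid bc$ (and $3q_1\cdots q_{t'}\mid bc$ when $3\in T$), which is legitimate: stability plus anisotropy at $p$ forces exactly one of $a,b,c$ to be divisible by $p$, to the first power, and $a\le 2$ rules out $a$. Combined with the bound $c\le 3q_1^2q_2\cdots q_j(t'-j+1)$ from the proof of Lemma \ref{keylem}, this avoids any new representation-theoretic input beyond Lemma \ref{a=2}, at the price of the case split ($t'\le 5$ if $3\notin T$, $t'\le 4$ if $3\in T$) and the borderline case $q_1=11$, $t'=5$. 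Two points need to be made explicit in a write-up. First, for $3\in T$ and $q_1\ge 13$ the inequality $3q_1\cdots q_5>(15q_1^2)^2$ does not follow from the crude estimate $q_i>q_1$ alone (that only settles $q_1>75$), so the primes $13\le q_1\le 73$ must each be checked against the minimal consecutive-prime configuration — they all pass, but this is a finite verification, not a one-line estimate. Second, in the borderline case you must first note that $q_2q_3q_4q_5\le 1815^2/33<99826$ forces $\{q_2,\dots,q_5\}=\{13,17,19,23\}$ before concluding $bc=3\cdot11\cdot13\cdot17\cdot19\cdot23$; after that, since $b$ would have to be a divisor of this number lying in $[1757,1785]$ and none exists, the contradiction does go through.
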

\begin{proof}
By the proof of Lemma \ref{lemt7}, we have $t'\le 6$. First, assume that $a=2$. By Lemma \ref{lembound4} with $i=29$ and $s=8$, one may easily show, by using Table 1, that
$$
\vert \{ 1\le n\le 29 : s_n\overset{2}{\longrightarrow} \langle 2,b,c\rangle \} \vert \ge 5.
$$
On the other hand,
$$
\begin{array}{l}
\vert \{ 1\le n\le 29 : 8n+2+b+c=2\alpha^2+b+c\ \text{for some odd integer}\ \alpha \} \vert \\[0.4em]
\hskip 3pc =\vert \{\alpha\ge 3 : 2\alpha^2+b+c \le 8\cdot29+2+b+c,\ \alpha \equiv 1\Mod 2\} \vert=4. \\[0.4em]
\end{array}
$$
Thus we have $2+9b+c\le 8\cdot29+2+b+c$ and $b\le 29$.
Let  $g$ be a positive integer satisfying Lemma \ref{lembound3} in the case when $p=q_1$ and $d=24$. Note that
$$
24q_1^2n+24g+2+b+c\longrightarrow \langle 2,b,c\rangle \ \ \text{over}\ \ \z_3
$$
for any integer $n$. For any positive integer $r$, define 
$$
h(r)=24q_1^2(r-1)+24g+2+b+c.
$$ 
Clearly $h(r)$ is represented by $\langle 2,b,c\rangle$ over $\z_q$ for any $q \in \{2,3,q_1\}$.
Note that 
$$
t'-1 \le 5,\ \ b_{7,2}(6)=2\quad \text{and}\quad b_{7,j}(6)=1\ \ \text{for any}\ \ j\ge 3,
$$
where $b_{ij}(s)$ is an integer defined in Lemma \ref{lembound4}.
From this, similarly with the proof of Lemma \ref{lembound4}, one may easily show that there exists a positive integer $r$ with $1\le r\le7$ such that $h(r)$ is represented by  $\langle 2,b,c\rangle$ over $\z_{q_{i}}$ for any $i=2,3,\dots,t'$.
Therefore, we have 
$$
h(r)=24q_1^2(r-1)+24g+2+b+c\longrightarrow \gen(\langle 2,b,c\rangle).
$$
Furthermore, since $\D(2,b,c)$ is regular, we have 
$$
h(r)=24q_1^2(r-1)+24g+2+b+c\overset{2}{\longrightarrow}\langle 2,b,c\rangle.
$$
From our choices of $g$ and $r$, we have $h(r)-c\nrightarrow \langle 2,b\rangle$. 
Thus, $2+b+9c\le h(r)$, which implies that $c \le 21q_1^2$. Therefore we have 
$$
q_1q_2\cdots q_{t'} \le abc \le 58c \le 1218q_1^2.
$$ 
This implies that $t'\le 4$.

Now, assume that $a=1$. By Lemma \ref{lembound4} with $i=35$ and $s=8$, one may easily show that
$$
\vert \{ 1\le n\le 35 : s_n\overset{2}{\longrightarrow}\langle 1,b,c\rangle \} \vert \ge 8.
$$
On the other hand,
$$
\begin{array}{l}
\vert \{ 1\le n\le 35 : 8n+1+b+c=\alpha^2+b+c\ \ \text{for some odd integer}\ \ \alpha \} \vert \\[0.4em]
\hskip 3pc=\vert \{ \alpha\ge 3 : \alpha^2+b+c \le 8\cdot 35+1+b+c,\ \alpha \equiv 1\Mod 2 \} \vert=7. \\[0.4em] 
\end{array}
$$
Thus we have $1+9b+c \le 8\cdot 35+1+b+c$ and $b\le 35$. 
Similarly to the case when $a=2$, one may deduce that $c \le 21q_1^2$. Therefore, we have
$$
q_1q_2\cdots q_{t'} \le abc \le 35c \le 735q_1^2,
$$ 
which implies that $t'\le 4$. This completes the proof.
\end{proof}

\begin{lem} \label{upperb}  For any stable regular ternary triangular form $\Delta(a,b,c)$ with $0<a\le b\le c$, we have $a+b \le21$.   
\end{lem}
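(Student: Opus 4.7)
The plan is to adapt the counting argument of Lemma~\ref{a=2}, but this time with the auxiliary sets redefined so as to bound $b$ (rather than $c$) in the two remaining cases $a \in \{1,2\}$ identified by Lemma~\ref{a=2}. For each odd integer $k$, set $s_n = 8n + a + b + c$ and define
\begin{align*}
U_k(a,b,c) &= \{1 \le n < (k^2-1)a/8 : s_n \overset{2}{\longrightarrow} \langle a,b,c\rangle\}, \\
V_k(a,b,c) &= \{1 \le n < (k^2-1)a/8 : s_n - b - c \overset{2}{\longrightarrow} \langle a\rangle\}.
\end{align*}
Thus $V_k$ consists precisely of those $n$ for which $s_n = ax^2 + b + c$ for some odd $x$, i.e., those $n$ whose representation is achievable with $y = z = \pm 1$. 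Clearly $V_k \subseteq U_k$, and if $|V_k| < |U_k|$, then some $n$ admits a representation $s_n = ax^2 + by^2 + cz^2$ with $xyz$ odd and $(y,z) \ne (\pm 1, \pm 1)$, so $|y| \ge 3$ or $|z| \ge 3$. Combining this with $s_n \le 8[(k^2-1)a/8 - 1] + a + b + c$ and using $b \le c$, we deduce in either case that $b \le (k^2-1)a/8 - 1$.

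The plan is to choose $k$ so that this bound reads exactly $21 - a$: take $k = 9$ for $a = 2$ (giving $b \le 19$) and $k = 13$ for $a = 1$ (giving $b \le 20$), so that $a + b \le 21$ in both cases. It remains to verify the hypothesis $|V_k| < |U_k|$ for these two choices. The set $V_k$ is explicit, since $s_n = ax^2 + b + c$ forces $n = a(x^2-1)/8$ for odd $x \ge 3$; enumerating yields $V_9(2,b,c) = \{2,6,12\}$ with $|V_9| = 3$, and $V_{13}(1,b,c) = \{1,3,6,10,15\}$ with $|V_{13}| = 5$. For $|U_k|$, since $\Delta(a,b,c)$ is stable regular and $t \le 5$ by Lemma~\ref{tle5}, Lemma~\ref{lembound4} applied with $s = 6$ (so $r_s = 17$), together with the appropriate rows of Table~1 for $i = (k^2-1)a/8 - 1$, delivers $|U_9| \ge 19 - (4+3+3+2+2) = 5 > 3$ when $a = 2$ and $|U_{13}| \ge 20 - (4+3+3+2+2) = 6 > 5$ when $a = 1$. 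These strict inequalities close the argument.

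The main obstacle is the sharpness of the numerical comparison: when $a = 1$ the margin between $|U_{13}|$ and $|V_{13}|$ is only $1$, so the argument really does depend on the sharp count of locally unrepresented values given by Lemma~\ref{upperbound} (tabulated in Table~1). A secondary, but related, difficulty is that the chosen $k$ must simultaneously deliver the target bound $b \le 21 - a$ \emph{and} satisfy $|U_k| > |V_k|$; the pairs $(a,k) = (2,9)$ and $(1,13)$ happen to meet both demands, but little room is left.
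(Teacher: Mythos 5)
Your proposal is correct and follows essentially the same route as the paper: the paper's own proof applies Lemma \ref{lembound4} with $s=6$ and $i=19$ (for $a=2$) resp.\ $i=20$ (for $a=1$), obtaining at least $5$ resp.\ $6$ locally represented values against exactly $3$ resp.\ $5$ values of the form $a\alpha^2+b+c$, which are precisely your counts $|U_9|\ge 5>3=|V_9|$ and $|U_{13}|\ge 6>5=|V_{13}|$. The only difference is cosmetic — you package the ranges via the parameter $k$ and the $U_k,V_k$ notation of Lemma \ref{a=2} (with $V_k$ suitably redefined to bound $b$), while the paper states the intervals directly.
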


\begin{proof}
Note that $a=1$ or $2$ by Lemma \ref{a=2}.
First, assume that $a=2$. By Lemma \ref{lembound4} with $i=19$ and $s=6$, one may easily show that
$$
\vert \{ 1\le n\le 19 : 8n+2+b+c\overset{2}{\longrightarrow}\langle 2,b,c\rangle \} \vert \ge 5.
$$
On the other hand,
$$
\begin{array}{l}
\vert \{ 1\le n\le 19 : 8n+2+b+c=2\alpha^2+b+c\ \ \text{for some odd integer}\ \ \alpha \} \vert \\[0.4em]
\hskip 3pc =\vert \{ \alpha\ge3 : 2\alpha^2+b+c \le 8\cdot19+2+b+c,\ \alpha \equiv 1\Mod 2\} \vert=3. \\[0.4em]
\end{array}
$$
Thus we have $2+9b+c \le 8\cdot 19+2+b+c$, and $b\le 19$. Now, assume that $a=1$. By Lemma \ref{lembound4} with $i=20$ and $s=6$, one may check that
$$
\vert \{ 1\le n\le 20 : 8n+1+b+c\overset{2}{\longrightarrow}\langle 1,b,c\rangle \} \vert \ge 6.
$$
On the other hand,
$$
\begin{array}{l}
\vert \{ 1\le n\le 20 : 8n+1+b+c=\alpha^2+b+c\ \ \text{for some odd integer}\ \ \alpha \} \vert \\[0.4em]
\hskip 3pc =\vert \{ \alpha\ge3 : \alpha^2+b+c \le 8\cdot20+1+b+c,\ \alpha \equiv 1\Mod 2\} \vert=5. \\[0.4em]
\end{array}
$$
Thus we have $1+9b+c \le 8\cdot20+1+b+c$, and $b\le 20$. 
\end{proof}

Now, we are ready to classify all stable regular ternary triangular forms. The following lemma is very useful to prove the regularity.

\begin{lem} \label{lem13}
Let $m$ be a positive integer congruent to 4 modulo 8. Then
$$
r_{(1,1)}(m,\langle 1,3\rangle)=\frac23 r(m,\langle 1,3\rangle).
$$
\end{lem}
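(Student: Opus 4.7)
The plan is to split $R(m,\langle 1,3\rangle)$ according to the parities of $x$ and $y$. A direct check modulo $8$ shows that if $m\equiv 4\pmod 8$ and $x^2+3y^2=m$, then either both of $x,y$ are odd or both are even; in the latter case $(x/2,y/2)$ is a representation of $m/4$ by the same form. Hence
\[
r(m,\langle 1,3\rangle) = r_{(1,1)}(m,\langle 1,3\rangle) + r(m/4,\langle 1,3\rangle),
\]
so the lemma reduces to proving $r(m,\langle 1,3\rangle) = 3\,r(m/4,\langle 1,3\rangle)$.

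To prove this identity I would bridge to the classical norm form $a^2+ab+b^2$ of the Eisenstein integers. The substitution $\phi(x,y)=(x+y,\,y-x)$ sends $x^2+3y^2$ to $a^2+ab+b^2$ and is a bijection from $\z^2$ onto $\{(a,b)\in\z^2 : a\equiv b\pmod 2\}$ (the inverse being $(a,b)\mapsto((a-b)/2,(a+b)/2)$). Writing $M(n)=r(n,\,a^2+ab+b^2)$, this yields
\[
r(n,\langle 1,3\rangle) = \#\{(a,b)\in\z^2 : a^2+ab+b^2=n,\ a\equiv b\pmod 2\}.
\]

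Next I would exploit the order-$3$ automorphism $\sigma(a,b)=(b,\,-a-b)$ of $a^2+ab+b^2$, whose only fixed point is the origin. A parity check shows that $\sigma$ cycles the three classes $(\textrm{odd},\textrm{odd})\mapsto(\textrm{odd},\textrm{even})\mapsto(\textrm{even},\textrm{odd})\mapsto(\textrm{odd},\textrm{odd})$, while keeping the both-even class invariant. So for odd $n$ (where no both-even representation exists), the set of representations breaks into free $\sigma$-orbits of size $3$, each containing exactly one both-odd element. Since the constraint $a\equiv b\pmod 2$ for odd $n$ forces $a,b$ both odd, this gives $r(n,\langle 1,3\rangle)=M(n)/3$.

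Finally, for $m\equiv 4\pmod 8$ the equation $a^2+ab+b^2=m$ forces both $a,b$ even, so applying the bijection at $m$ collapses to $r(m,\langle 1,3\rangle)=M(m/4)$. Since $m/4$ is odd, the previous step gives $M(m/4)=3\,r(m/4,\langle 1,3\rangle)$, and combined with the initial splitting this yields $r_{(1,1)}(m,\langle 1,3\rangle)=\tfrac{2}{3}r(m,\langle 1,3\rangle)$. The point requiring most care is the parity cycling under $\sigma$ coupled with the bookkeeping of which parity classes survive for odd versus $\equiv 4\pmod 8$ arguments; both pieces are elementary but need to be laid out precisely.
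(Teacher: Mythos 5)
Your argument is correct, and it is genuinely self-contained where the paper is not: the paper simply cites \cite[Lemma 3.1(iii)]{KO} for this identity, so there is no in-text proof to compare against. Your route checks out at every step. The reduction $r(m,\langle 1,3\rangle)=r_{(1,1)}(m,\langle 1,3\rangle)+r(m/4,\langle 1,3\rangle)$ is right, since $x^2+3y^2\equiv 0\pmod 4$ forces $x\equiv y\pmod 2$ and the both-even solutions biject with $R(m/4,\langle 1,3\rangle)$. The change of variables $(x,y)\mapsto(x+y,y-x)$ does carry $x^2+3y^2$ onto $a^2+ab+b^2$ restricted to $a\equiv b\pmod 2$, and the map $\sigma(a,b)=(b,-a-b)$ is an order-$3$ isometry of the Eisenstein norm form fixing only the origin, which cycles the three parity classes $(\mathrm{odd},\mathrm{odd})\to(\mathrm{odd},\mathrm{even})\to(\mathrm{even},\mathrm{odd})$ exactly as you say; since $a^2+ab+b^2$ is odd unless $a,b$ are both even, odd $n$ admits no both-even representations, so each size-$3$ orbit contributes one both-odd solution and $r(n,\langle 1,3\rangle)=M(n)/3$ for odd $n$, while even $m$ forces both-even and hence $r(m,\langle 1,3\rangle)=M(m/4)=3r(m/4,\langle 1,3\rangle)$. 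Combining the two displayed identities gives $r_{(1,1)}(m,\langle 1,3\rangle)=\frac23 r(m,\langle 1,3\rangle)$ as required. What your approach buys is a short, purely elementary proof exploiting the extra automorphism of discriminant $-3$, rather than deferring to the representation-number computations of \cite{KO}; the cost is only that the parity bookkeeping under $\sigma$ must be written out carefully, which you have flagged and which does go through.
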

\begin{proof}
See \cite[Lemma 3.1(iii)]{KO}.
\end{proof}

\begin{thm} \label{thmfloor}
There are exactly $17$ stable regular ternary triangular forms.
$$
\begin{array}{llll} 
\Delta_1=\Delta(1,1,1),&\Delta_2=\Delta(1,1,2),&\Delta_3=\Delta(1,1,3),&
\Delta_4=\Delta(1,1,4),\\[0.2em]
\Delta_5=\Delta(1,2,2),&\Delta_6=\Delta(1,1,5),&\Delta_7=\Delta(1,1,6),&
\Delta_8=\Delta(1,2,3),\\[0.2em]
\Delta_9=\Delta(1,2,4),&\Delta_{10}=\Delta(1,2,5),&\Delta_{11}=\Delta(1,1,12),&
\Delta_{12}=\Delta(1,3,4),\\[0.2em]
\Delta_{13}=\Delta(2,2,3),&\Delta_{14}=\Delta(1,2,10),&
\Delta_{15}=\Delta(1,1,21),&\Delta_{16}=\Delta(1,4,6),\\[0.2em]
\Delta_{17}=\Delta(1,3,10).&&&
\end{array}
$$
\end{thm}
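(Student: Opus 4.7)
The plan is to assemble the bounds already proved in this section into a finite list of candidate triples $(a,b,c)$ and then to resolve each candidate individually. By Lemma \ref{a=2} we have $a \in \{1,2\}$, by Lemma \ref{upperb} we have $b \leq 20$ (if $a=1$) or $b \leq 19$ (if $a=2$), and by Lemma \ref{tle5} the set $T$ of odd primes at which $\langle a,b,c\rangle$ is anisotropic has at most five elements. To bound $c$ as well, I would proceed as in the proof of Lemma \ref{tle5}: fix $(a,b)$, choose a Lemma \ref{lembound3}-witness $g$ at the smallest anisotropic odd prime $q_1 \geq 5$ (treating the degenerate case $T \subseteq \{3\}$ separately by a direct argument), apply Lemma \ref{lembound4} with a sufficiently large $i$, and compare against the count of representations with $\gamma=1$ to force an inequality of the form $9c \leq O(q_1^2)$. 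Since stability constrains the prime factorization of $c$, this yields a finite list of candidates in each of the finitely many $(a,b)$ cases.

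Having reduced to a finite list, the argument splits into two parts. For each candidate $(a,b,c)$ \emph{not} appearing in the list of $17$, I would produce an explicit small positive integer $n$ witnessing non-regularity: $8n+a+b+c$ is represented by $\langle a,b,c\rangle$ with odd variables locally at every prime but not globally. The local check is a finite computation at the odd primes dividing the discriminant, and the global non-representability follows from an exhaustive bounded search of integer solutions of $a\alpha^2 + b\beta^2 + c\gamma^2 = 8n+a+b+c$. Running this routine over the (finite) candidate list by MAPLE eliminates every triple outside the list of $17$.

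For each of the $17$ listed forms regularity must then be proved theoretically. The seven universal forms of Liouville, which appear in the list as $\Delta_1, \Delta_2, \Delta_4, \Delta_5, \Delta_6, \Delta_8, \Delta_9$, are immediately regular. The remaining ten forms are handled one at a time by the following strategy: (i) reduce regularity to representability of $m = 8n + a+b+c$ by the lattice $\langle a,b,c\rangle$ with all three variables odd; (ii) use the genus theory of $\langle a,b,c\rangle$ (class number one, or explicit control of spinor exceptions) to pass from local to global representability; (iii) extract the all-odd condition via an identity of the same flavor as Lemma \ref{lem13}, which says that a fixed positive fraction of the representations of $m \equiv 4 \pmod 8$ by $\langle 1,3\rangle$ already have both variables odd. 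Analogous identities for the other lattices $\langle a,b,c\rangle$ either follow from class-number-one arguments (using the theta-series identity $r = r_{\text{gen}}$) or require separate theta-series manipulations.

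The main obstacle will be step (ii) for the four forms of largest discriminant, $\Delta_{14}=\Delta(1,2,10)$, $\Delta_{15}=\Delta(1,1,21)$, $\Delta_{16}=\Delta(1,4,6)$, and $\Delta_{17}=\Delta(1,3,10)$, for each of which the associated quadratic form has a genus with more than one class. For these I would have to analyze the spinor exceptional integers carefully, either by ruling out that an integer of the form $8n+a+b+c$ can be a spinor exception, or, when such an exception does arise, by descending via a Watson transformation $\lambda_p$ (which by Lemmas \ref{lemdescend1} and \ref{lemdescend2} preserves regularity and stability) to a form of smaller discriminant whose regularity has already been established in an earlier case.
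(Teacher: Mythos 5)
Your overall architecture --- reduce to a finite candidate list via the section's lemmas plus a bound on $c$, eliminate non-regular candidates computationally, then prove regularity of the $17$ survivors using universality for seven of them and local--global plus parity arguments for the rest --- matches the paper. But two of your steps have genuine gaps.

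First, your method for bounding $c$ fails as stated. Extending the argument of Lemma \ref{tle5} yields an inequality of the shape $c\le 21q_1^2$, where $q_1\ge 5$ is the smallest anisotropic odd prime. This is vacuous exactly when $q_1$ is large: if $c$ is divisible to the first power by a large prime $p$ with $\left(\frac{-ab}{p}\right)=-1$, the form is still stable and is anisotropic at $p$, so $q_1$ can be comparable to $c$ and the inequality excludes nothing. Stability constrains $\ord_p(c)$ but not the size of $p$, so your remark that ``stability constrains the prime factorization of $c$'' does not rescue finiteness. The paper bounds $c$ by a different device: for each pair $(a,b)$ it exhibits six integers $e\equiv a+b\Mod 8$, none represented by $\langle a,b\rangle$ and with essentially disjoint odd prime supports; if $c$ is large, each $e+c$ fails to be $2$-represented, so by regularity it must fail locally at some anisotropic odd prime $p$, and one checks $p$ must divide $c$ and hence $e$. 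This produces six distinct anisotropic primes, contradicting $t\le 5$ uniformly in where those primes sit.

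Second, your treatment of the surviving $17$ forms takes a wrong turn. All seventeen lattices $\langle a_i,b_i,c_i\rangle$ have class number one --- the paper states and uses this --- so no spinor-exception analysis is needed for $\Delta(1,2,10)$, $\Delta(1,1,21)$, $\Delta(1,4,6)$, $\Delta(1,3,10)$; the only real work there is extracting the all-variables-odd condition, which the paper does by congruence inspection, by primitivity of the representation over $\z_2$, or by the theta identity of Lemma \ref{lem13}. More seriously, your fallback of ``descending via a Watson transformation to a form of smaller discriminant whose regularity has already been established'' uses Lemmas \ref{lemdescend1} and \ref{lemdescend2} in the wrong direction: they show that regularity of $\Delta$ implies regularity of $\lambda_p(\Delta)$, not conversely, so regularity cannot be pulled back from the image to the preimage. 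Had the class numbers actually exceeded one, this contingency plan would not close the argument.
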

\begin{proof}
By Lemmas \ref{a=2}, \ref{tle5} and \ref{upperb},  we have
$$
t \le 5, \ \ 1\le a\le2, \ \ \text{and} \ \ a+b\le 21.
$$
First, we want to find an upper bound for $c$ for each possible pair $(a,b)$. 
Since all the other cases can be done in a similar manner, we only consider 3 representative cases here.

{\bf \textrm{(i)} $(a,b)=(2,2)$}. 
Let $E_1=\{ 4\!\cdot\!3, 4\!\cdot\!7, 4\!\cdot\!11, 4\!\cdot\!19, 4\!\cdot\!23, 4\!\cdot\!31\}$. 
Suppose that $c\ge 16$. For any $e_1\in E_1$, $e_1$ is not represented by $\langle 2,2\rangle$. Furthermore, since $e_1+c <4+9c$ by assumption,
 $e_1+c\overset{2}{\nrightarrow}\langle 2,2,c\rangle$.   
Since $\Delta(2,2,c)$ is stable regular, there is an odd prime divisor $p$ of $e_1+c$ such that $\langle 2,2,c\rangle$ is anisotropic over $\z_{p}$. Therefore, $p$ divides $c$ and also divides $e_1$.
Furthermore, since $\vert E_1\vert =6$, there are at least six such odd primes. 
This is a contradiction to the fact that $t\le 5$. 
Thus, we have $c\le 15$ if $(a,b)=(2,2)$.

{\bf \textrm{(ii)} $(a,b)=(2,3)$}.
Let $E_2=\{ 69,117,141,213,285,333\}$. Suppose that $c\ge 42$. 
 Since we are assuming that $\Delta(2,3,c)$ is 3-stable, $c$ is not divisible by 3. 
Any element of $E_2$ is of the from 
$8n+2+3$ for some positive integer $n$, and the elements of $E_2$ share no odd prime divisors other than $3$.
Let $e_2 \in E_2$. From the assumption that $c\ge 42$, one may easily check that $e_2+c\overset{2}{\nrightarrow} \langle 2,3,c\rangle$.  Since $\D(2,3,c)$ is stable regular, 
there is an odd prime $p$ dividing $e_2+c$ and $\langle 2,3,c\rangle $ is anisotropic over $\z_p$. Hence $p$ is greater than $3$ and divides $e_2$.  
Thus there are at least six such odd primes. This is a contradiction, and we have $c\le 41$. 

{\bf \textrm{(iii)} $(a,b)=(2,6)$}. 
Since $\D(2,6,c)$ is 3-stable, $c$ is not a multiple of $3$. 
Note that $48+c=8\cdot 5+2+6+c \overset{2}{\nrightarrow} \langle 2,6,c\rangle$. 
Thus there is an odd prime $p>3$ dividing $48+c$ and 
$\langle 2,6,c\rangle$ is anisotropic over $\z_{p}$. Therefore, $48$ is divisible by $p$, which is a contradiction. 
Therefore,  the pair $(a,b)=(2,6)$ is impossible.

All the other cases can be done in a similar manner to one of the above three cases, and one may obtain an upper bound for $c$ in each case.  After that, with the help of MAPLE program, one may show that there are $17$  candidates of stable regular ternary triangular forms given above. 

For each $i=1,2,\cdots,17$, we write $\Delta_i=\Delta(a_i,b_i,c_i)$ and 
$L_i=\langle a_i,b_i,c_i\rangle$.
For any $i \in U=\{1,2,4,5,6,8,9\}$, it is well known that $\Delta_i$ is universal (see \cite[p.23]{D99}). 
Hence we may assume that $i\not\in U$.
Let $n_i$ be any positive integer such that 
$$
\widetilde{n_i}:=8n_i+a_i+b_i+c_i \longrightarrow \gen(L_i).
$$
Note that $L_i$ has class number 1 for any $1\le i\le 17$ and thus $\widetilde{n_i}\longrightarrow L_i$.

For $i\in \{ 11,13,14,15,16\}$, one may easily check that
$$
R(\widetilde{n_i},L_i)=R_{(1,1,1)}(\widetilde{n_i},L_i),
$$
that is, if $a_ix^2+b_iy^2+c_iz^2=\widetilde{n_i}$, then $xyz \equiv 1 \Mod 2$. 
Assume that $i\in \{ 7,10\}$. 
Since the class number of $L_i$ is 1 and it primitively 
represents $\widetilde{n_i}$ over $\z_2$, there is a vector 
$(x,y,z)\in R(\widetilde{n_i},L_i)$ with $(x,y,z,2)=1$. One may easily check 
that $(x,y,z,2)=1$ implies $xyz\equiv 1\Mod 2$ in this case. 
If $i=12$, 
then one may easily show that
$$
r(\widetilde{n_i},L_i)=r_{(0,0,0)}(\widetilde{n_i},L_i)+r_{(0,0,1)}(\widetilde{n_i},L_i)+r_{(1,1,1)}(\widetilde{n_i},L_i).
$$
Similarly to the previous case, the existence of a vector 
$(x,y,z)\in R(\widetilde{n_i},L_i)$ with $(x,y,z,2)=1$ implies that
$$
r_{(0,0,1)}(\widetilde{n_i},L_i)+r_{(1,1,1)}(\widetilde{n_i},L_i)>0.
$$
By Lemma \ref{lem13},
$$
\begin{array}{rl}
r_{(1,1,1)}(8n_i+8,x^2+3y^2+4z^2)&=\displaystyle\sum_{z:\text{odd}}r_{(1,1)}(8n_i+8-4z^2,x^2+3y^2) \\
&=\displaystyle\sum_{z:\text{odd}}\frac23 r(8n_i+8-4z^2,x^2+3y^2) \\ [1.6em]
&=\displaystyle\frac23 r_{(0,0,1)}(\widetilde{n_i},L_i)+\displaystyle\frac 23r_{(1,1,1)}(\widetilde{n_i},L_i).
\end{array}
$$
Therefore we have $r_{(1,1,1)}(\widetilde{n_i},x^2+3y^2+4z^2)=2r_{(0,0,1)}(\widetilde{n_i},x^2+3y^2+4z^2)>0$.
If $i=3$, then one may easily check that
$$
r(\widetilde{n_i},L_i)=2r_{(1,0,0)}(\widetilde{n_i},L_i)+r_{(1,1,1)}(\widetilde{n_i},L_i).
$$
By Lemma \ref{lem13}, we have
$$
\begin{array}{rl}
r_{(1,1,1)}(8n_i+5,x^2+y^2+3z^2)&=\displaystyle\sum_{x:\text{odd}}r_{(1,1)}(8n_i+5-x^2,y^2+3z^2) \\[1.6em]
&=\displaystyle\sum_{x:\text{odd}} 2r_{(0,0)}(8n_i+5-x^2,y^2+3z^2) \\[1.6em]
&=2r_{(1,0,0)}(8n_i+5,x^2+y^2+3z^2).
\end{array}
$$
Thus we have $r_{(1,1,1)}(\widetilde{n_i},x^2+y^2+3z^2)=\frac12 r(\widetilde{n_i},x^2+y^2+3z^2)>0$.
Finally, assume that $i=17$. 
Note that if $x^2+3y^2+10z^2=8n+14$, then $x\equiv y\Mod 2$ and $z\equiv 1\Mod 2$. 
By Lemma \ref{lem13} again, we have
$$
\begin{array}{rl}
r_{(1,1,1)}(8n_i+14,x^2+3y^2+10z^2)&=\displaystyle\sum_{z\in \z}r_{(1,1)}(8n_i+14-10z^2,x^2+3y^2) \\[1.6em]
&=\displaystyle\sum_{z\in \z}\frac23 r(8n_i+14-10z^2,x^2+3y^2) \\[1.6em]
&=\displaystyle\frac23 r(8n_i+14,x^2+3y^2+10z^2).
\end{array}
$$
This completes the proof.
\end{proof}

\section{Regular ternary triangular forms}

In this section, we prove that there are exactly 49 regular ternary triangular forms.
Let $\D(a',b',c')$ be a regular  ternary  triangular form  and let $\D(a,b,c)$ be the stable regular ternary triangular form obtained from it by taking $\lambda$-transformations, if necessary, repeatedly.
Here, we are not assuming that $a\le b \le c$.
It might happen that there is an odd prime $l$ dividing $a'b'c'$ such that $(abc,l)=1$.  
We call such a prime $l$ a \textit{missing prime.}
Note that $\lambda_p \circ \lambda_q=\lambda_q \circ \lambda_p$ for any odd primes $p$ and $q$. 
Thus if $l$ is a missing prime, then one of the followings holds:
\begin{enumerate} [(i)]
\item $\Delta(a,l^2b,l^2c)$ is regular.
\item $\Delta(a,b,l^2c)$ is regular and $\left( \displaystyle\frac{-ab}{l} \right)=-1$.
\end{enumerate}

\begin{lem} \label{lemmp}
There is no missing prime $l$ greater than $7$.
\end{lem}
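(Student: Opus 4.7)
The strategy is to argue case by case over the 17 stable regular ternary triangular forms $\Delta(a,b,c)$ given in Theorem~\ref{thmfloor}, and for each to rule out both scenarios (i) $\Delta(a,l^2b,l^2c)$ regular, and (ii) $\Delta(a,b,l^2c)$ regular with $\left(\tfrac{-ab}{l}\right)=-1$, for every prime $l>7$. In each subcase I plan to exhibit a positive integer $n$ that is locally represented by the relevant form but is not globally represented, contradicting regularity.

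The basic gap argument runs as follows. For case~(i), if $0<n<l^2\min(b,c)$, then any representation $aT_x+l^2bT_y+l^2cT_z=n$ must satisfy $T_y=T_z=0$ (otherwise $l^2bT_y\ge l^2b>n$), and hence $n=aT_x$. So it suffices to find $n<l^2\min(b,c)$ such that $n/a$ is not a triangular number and $n$ is nevertheless locally represented by $\langle a,l^2b,l^2c\rangle$. In case~(ii), applied with $n<l^2c$, a representation forces $T_z=0$ and the problem reduces to $aT_x+bT_y=n$; one picks $n<l^2c$ outside the value set of the binary form $\Delta(a,b)$.

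Local representability away from $l$ is essentially free: since $l^2$ is a unit square at every prime $p\neq l$, the $\z_p$-lattices $\langle a,l^2b,l^2c\rangle$ and $\langle a,b,c\rangle$ are isometric, so local solvability at $8n+a+l^2b+l^2c$ reduces to local solvability at $8n'+a+b+c$ where $n'=n+(l^2-1)(b+c)/8$; by the regularity (in several cases, universality) of $\Delta(a,b,c)$, this reduces to a routine genus check. At $p=l$, a Hensel-type analysis shows that local representability in case~(i) amounts to the condition that $(8n+a)/a$ be a quadratic residue modulo $l$ or have $l$-valuation at least two; the anisotropy hypothesis $\left(\tfrac{-ab}{l}\right)=-1$ in case~(ii) plays an exactly analogous role.

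The plan is then to assemble, for each of the 17 forms and each subcase, a short list of candidate witnesses $n$---for instance small non-triangular integers like $2,4,5,7,8,\ldots$ when $a=1$, or small integers outside the range of $\Delta(a,b)$ for case~(ii)---and to verify that for every $l>7$ at least one candidate fulfils the quadratic-residue condition at $l$. The main obstacle is the case analysis: 17 forms times two subcases requires tailored witness lists and local checks. For $l$ beyond an explicit bound a density argument---the number of quadratic residues modulo $l$ below $l^2\min(b,c)$ dominates the count of triangular multiples of $a$---yields a uniform witness, and the finitely many remaining small primes are handled by direct computation.
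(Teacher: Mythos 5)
Your proposal takes essentially the same route as the paper's proof: in both cases the size restriction forces the variables attached to $l^2$ to contribute trivially, so regularity would force every locally represented integer in an initial segment to lie in the sparse value set of a unary (case (i)) or binary (case (ii)) subform, and comparing the density of locally represented integers (after discarding the obstructions at $3,5,7$ and the quadratic-residue/valuation condition at $l$) with that sparse set gives an explicit upper bound on $l$. The paper implements exactly this, obtaining $l\le 131$ in case (i) via a uniform count and $l\le 23$ in case (ii) via fixed witnesses such as $418+l^2c$, and then disposes of the remaining small primes by direct computation, just as you propose.
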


\begin{proof}
Let $l$ be a missing prime. Then there is a stable regular ternary triangular form $\Delta(a,b,c)$ such that $(abc,l)=1$, and
(i) or (ii) given above holds. 

Assume that the case (i) holds, that is,  $\Delta(a,l^2b,l^2c)$ is regular. We let
$$
s_n=8n+a+l^2b+l^2c\quad \text{for}\ \ n=1,2,3,\cdots.
$$
First, we prove that $l\le 131$. Assume to the contrary that $l\ge 137$. One may easily check that if 
$$
\alpha^2 a+\beta^2 l^2 b+\gamma^2 l^2 c \le 8l+a+l^2 b+l^2 c
$$ 
with odd integers $\alpha, \beta$ and $\gamma$, then $\beta^2 =\gamma^2 =1$. 
Thus we have
$$
\left\vert \left\{ 1\le n\le l : s_n\overset{2}{\longrightarrow} 
\langle a,l^2b,l^2c \rangle \right\} \right\vert
\le \left[ \sqrt{\displaystyle\frac{2l}{a}+\frac14}-\frac12 \right]
\le \left[ \sqrt{2l+\displaystyle\frac14} \right].
$$
On the other hand, by Theorem \ref{thmfloor}, 
the set of odd primes at which $\langle a,b,c \rangle$ is anisotropic is
$$
\emptyset ,\ \{ 3 \} ,\ \{ 5 \} ,\ \{ 7 \} ,\ \{ 3,5 \} \ \ \text{or}\ \ \{ 3,7 \} .
$$
From Remark \ref{simple}, we have
$$
\vert \{ 1\le n\le l : s_n\nrightarrow 
\langle a,l^2 b,l^2 c \rangle \text{ over } \z_p \} \vert \le 
\begin{cases} 
2\left\lceil \displaystyle\frac{l}{9} \right\rceil&\text{if}\ \ p=3, \\[10pt]
3\left\lceil \displaystyle\frac{l}{25} \right\rceil&\text{if}\ \ p=5, \\[10pt]
4\left\lceil \displaystyle\frac{l}{49} \right\rceil&\text{if}\ \ p=7, \\[10pt]
\displaystyle\frac{l+1}{2}&\text{if}\ \ p=l.
\end{cases}
$$
From the assumption that $l\ge 137$, we have $\displaystyle\frac{3}{25}l+3 \ge \frac{4}{49}l+4$. 
Since 
$$
l-\left( \displaystyle\frac{2}{9}l+2+\frac{3}{25}l+3+\frac{l+1}{2} \right) 
=\frac{71}{450}l-\frac{11}{2},
$$
we must have
$$
\left\vert \left\{ 1\le n\le l : s_n\overset{2}{\longrightarrow} 
\langle a,l^2b,l^2c \rangle \right\} \right\vert
\ge \left\lceil \displaystyle\frac{71}{450}l-\frac{11}{2} \right\rceil .
$$
However, one may directly  show that if $l\ge 137$, then 
$\left\lceil \displaystyle\frac{71}{450}l-\frac{11}{2} \right\rceil 
> \left[ \sqrt{2l+\frac14} \right]$. 
This is a contradiction and hence we have $l\le 131$. 
Now, by a direct calculation with the help of MAPLE, 
one may check that for any prime $11\le q\le 131$ and any stable regular ternary triangular form $\Delta(a,b,c)$, 
all of the triangular forms $\Delta(a,q^2 b,q^2 c)$ are not regular.

Now, assume that $\Delta(a,b,l^2 c) \ (a\le b)$ is regular with 
$\left( \displaystyle\frac{-ab}{l} \right) =-1$.
 By Theorem \ref{thmfloor}, $(a,b)$ is one of the following pairs:
$$
\begin{array}{lllllllll}
(1,1),&(1,2),&(1,3),&(1,4),&(2,2),&(1,5),&(1,6),&(2,3),&(2,4),\\
(1,10),&(2,5),&(1,12),&(3,4),&(2,10),&(1,21),&(4,6),&(3,10).& 
\end{array}
$$
 First, suppose that $l\ge 29$.
Since all the other cases can be done in a similar manner, we only consider the cases when $(a,b)=(1,1)$ or $(1,5)$.
Assume that $(a,b)=(1,1)$. Since
$$
418+l^2 c=8\cdot 52+1+1+l^2 c \longrightarrow \gen(\langle 1,1,l^2c\rangle),
$$
and $\Delta(a,b,l^2 c)$ is regular, there is a vector $(x,y,z)\in z^3$ with 
$xyz\equiv 1 \Mod 2$ such that $x^2+y^2+l^2 cz^2=418+l^2 c$. 
From the assumption that $l\ge 29$, we have $z^2=1$. 
This is a contradiction, for $418$ is not a sum of two integer squares. 
Next, assume that $(a,b)=(1,5)$. Note that
$$
110+l^2 c=8\cdot 13+1+5+l^2 c\longrightarrow \gen(\langle 1,5,l^2c\rangle).
$$
 Since we are assuming that $\D(1,5,l^2c)$ is regular, there is a vector 
$(x_1,y_1,z_1)\in \z^3$ with $x_1y_1z_1\equiv 1\Mod 2$ such that 
$x_1^2+5y_1^2+l^2 cz_1^2=110+l^2 c$. 
Since $l\ge 29$, we have $z_1^2=1$. This is a contradiction, for $110$ is not represented by $\langle 1,5 \rangle$.
Therefore, we have $l\le 23$. Now, by  a direct calculation with the help of MAPLE,
one may check that for any prime $11\le l\le 23$ and any stable regular ternary triangular form $\Delta(a,b,c)$,
all of the forms $\Delta(a,b,l^2 c)$ are not regular. 
This completes the proof.
\end{proof}

\begin{rmk}
By Theorem \ref{thmfloor} and Lemma \ref{lemmp}, any prime divisor of the discriminant of a regular ternary triangular form is less than or equal to $7$.
\end{rmk}

Let $\Delta(a',b',c')$ be a regular ternary triangular form. Then there are nonnegative integers $e_3,e_5$ and $e_7$ such that 
$$
\lambda_3^{e_3}(\lambda_5^{e_5}(\lambda_7^{e_7}(\Delta(a',b',c'))))=\Delta(a,b,c),
$$
is stable regular. Hence, to find all regular ternary triangular forms, it suffices to find all regular ternary triangular forms in the inverse image of the $\lambda_p$-transformation of each regular triangular form for each $p \in \{3,5,7\}$. Note that any triangular form in the inverse image $\lambda_p^{-1}(\Delta(a,p^rb,p^sc))$, for $abc \not \equiv 0 \Mod p$ and $0\le r \le s$, is given in Table 2.

\begin{table} [ht] 
\caption{Inverse image of $\lambda_p$-transformations}
{\begin{tabular}{|c|c|}
\hline
Cases & Triangular forms in $\lambda_p^{-1}(\D(a,p^rb,p^sc))$ \\
\hline
$r=s=0$ & $ \begin{array}{c} \D(p^2a,b,c),\D(a,p^2b,c),\D(a,b,p^2c), \\[0.1em] 
\D(p^2a,p^2b,c),\D(p^2a,b,p^2c),\D(a,p^2b,p^2c) \end{array}$ \\
\hline
$r=0,s=1$ & $\Delta(pa,pb,c),\Delta(a,p^2b,p^3c),\Delta(p^2a,b,p^3c),\Delta(a,b,p^3c)$ \\
\hline
$r=0,s\ge 2$ & $\Delta(a,p^2b,p^{s+2}c),\Delta(p^2a,b,p^{s+2}c),\Delta(a,b,p^{s+2}c)$ \\
\hline
$r=s=1$ & $\Delta(pa,b,p^2c),\Delta(pa,p^2b,c),\Delta(pa,b,c),\Delta(a,p^3b,p^3c)$ \\
\hline
$r=1,s\ge 2$ & $\Delta(pa,b,p^{s+1}c),\Delta(a,p^3b,p^{s+2}c)$ \\
\hline
$r\ge 2$ & $\Delta(a,p^{r+2}b,p^{s+2})$ \\
\hline
\end{tabular}}
\end{table}

First, we find all regular triangular forms in the inverse images of stable regular ternary triangular forms via $\lambda_p$-transformation for each $p \in \{3,5,7\}$, and then we repeat this process again until any inverse image does not contain a regular triangular form. 
As a sample, ternary triangular forms lying over $\Delta(1,1,1)$ are given in Table 3. In that table, if the triangular form is not regular, then the smallest positive integer which is represented locally, but not globally by the triangular form is given in parentheses.

\begin{table}
\caption{Triangular forms lying over $\Delta(1,1,1)$ via $\lambda$-transformations}
{\begin{tikzpicture}
      [
      arrow/.style = {draw, -latex},
      ]

      \coordinate (A3) at (0cm, 5.5cm);
      \coordinate (A2) at (0cm, 5cm);
      \coordinate (A1) at (0cm, 3.5cm);

      \coordinate (B2) at (3cm, 5.25cm);
      \coordinate (B1) at (3cm, 3.5cm);

      \coordinate (C2) at (6cm, 3.75cm);
      \coordinate (C1) at (6cm, 3.25cm);

      \coordinate (D2) at (9cm, 3.75cm);
      \coordinate (D1) at (9cm, 3.25cm);

      \coordinate (O) at (4.5cm, 0.5cm);

      \node at (A3) {$\D(1,1,81)(19)$};
      \node at (A2) {$\D(1,9,81)(19)$};
      \node at (A1) {$\D(1,1,9)$};

      \node at (B2) {$\D(1,81,81)(19)$};
      \node at (B1) {$\D(1,9,9)$};

      \node at (C2) {$\D(1,1,25)(5)\;\;$};
      \node at (C1) {$\D(1,25,25)(5)$};

      \node at (D2) {$\D(1,1,49)(8)\;\;$};
      \node at (D1) {$\D(1,49,49)(8)$};

      \node at (O) {$\D(1,1,1)$};

      \path [arrow] ([yshift=-3mm]A2) -- node [below][xshift=3mm,yshift=3mm] {$\lambda_{3}$} ([yshift=3mm]A1);
      \path [arrow] ([yshift=-3mm]B2) -- node [below][xshift=3mm,yshift=3mm] {$\lambda_{3}$} ([yshift=3mm]B1);

      \path [arrow] ([yshift=-3mm]A1) -- node [below][xshift=-1.5mm,yshift=0.5mm] {$\lambda_{3}$} ([xshift=-6mm,yshift=3mm]O);
      \path [arrow] ([yshift=-3mm]B1) -- node [below][xshift=-1.8mm,yshift=2.5mm] {$\lambda_{3}$} ([xshift=-2mm,yshift=3mm]O);
      \path [arrow] ([yshift=-3mm]C1) -- node [below][xshift=3mm,yshift=3mm] {$\lambda_{5}$} ([xshift=2mm,yshift=3mm]O);
      \path [arrow] ([yshift=-3mm]D1) -- node [below][xshift=2mm,yshift=0.8mm] {$\lambda_{7}$} ([xshift=6mm,yshift=3mm]O);
\end{tikzpicture}}
\end{table}

Finally, one may have a list of $49$ candidates for the regular ternary triangular forms including 17 stable regular forms,  which is given in Table 4.  The regularities of $32$ forms except $17$ stable regular forms will be proved here. Before doing that, we need some lemmas. 

Let $p$ be an odd prime and let $k$ be a positive integer relatively prime to $p$. 
Assume that $p$ is represented by the binary quadratic form $x^2+ky^2$. 
In 1928, B. W. Jones proved in his unpublished thesis that if the Diophantine equation $x^2+ky^2=N (N>0)$ has an integral solution, then it also has an integral solution $x,y$ with $(x,y,p)=1$.
The following lemma follows immediately from this.

\begin{lem} \label{lem12}
Let $N$ be a positive integer. If $x^2+2y^2=N$ for some $(x,y)\in \z^2$, 
then there is a vector $(\tilde{x},\tilde{y})\in \z^2$ such that
$$
\tilde{x}\not\equiv \tilde{y}\Mod 3,\ \tilde{x}\equiv x\Mod 4,\ \tilde{y}\equiv y\Mod 2 \ \ \text{and} \ \ \tilde{x}^2+2\tilde{y}^2=N.
$$
\end{lem}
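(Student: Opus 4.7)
The plan is to set up the problem inside $R=\z[\sqrt{-2}]$, where $3$ splits as $3=\pi\bar\pi$ with $\pi=1+\sqrt{-2}$ and $\bar\pi=1-\sqrt{-2}$. Writing $z=x+y\sqrt{-2}\in R$, the hypothesis is $z\bar z=N$. Since $\sqrt{-2}\equiv -1 \Mod{\pi}$, for any $\alpha+\beta\sqrt{-2}\in R$ one has $\pi\mid\alpha+\beta\sqrt{-2}$ if and only if $3\mid \alpha-\beta$, so the target condition $\tilde x\not\equiv\tilde y\Mod 3$ translates into $\pi\nmid\tilde z$ where $\tilde z=\tilde x+\tilde y\sqrt{-2}$.

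The key tool will be the norm-one element
$$
u\;=\;\frac{\bar\pi}{\pi}\;=\;\frac{\bar\pi^{\,2}}{3}\;=\;\frac{-1-2\sqrt{-2}}{3}\in\q(\sqrt{-2}).
$$
Whenever $\pi\mid z$, one has $uz=(z/\pi)\bar\pi\in R$, which has the same norm as $z$; a direct multiplication gives
$$
uz\;=\;\frac{-x+4y}{3}+\frac{-2x-y}{3}\sqrt{-2}.
$$
The central step is to check that the passage $z\mapsto uz$ preserves the congruences $x\Mod 4$ and $y\Mod 2$. Writing $x=y+3k$ (allowed precisely because $\pi\mid z$), the two new coordinates simplify to $y-k$ and $-y-2k$, and one verifies $y-k\equiv y+3k=x\Mod 4$ and $-y-2k\equiv y\Mod 2$ directly.

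I will then simply iterate. Starting with the given $z$, I repeat $z\mapsto uz$ for as long as $\pi\mid z$. Each such step strictly decreases $\ord_\pi(z)$ by $1$, preserves the norm, and keeps the mod-$4$ and mod-$2$ congruences intact. Since $\ord_\pi(z)\le\ord_3(N)<\infty$, the procedure terminates after finitely many steps at some $\tilde z$ with $\pi\nmid\tilde z$, which gives the desired $(\tilde x,\tilde y)$. The main obstacle to anticipate is choosing the right norm-one multiplier so that the two congruence conditions survive the transformation; this refines the general statement of Jones cited above, which only produces $\gcd(\tilde x,\tilde y,3)=1$ and does not track the mod-$4$ or mod-$2$ behaviour. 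Once $u=\bar\pi/\pi$ is identified, the rest of the argument reduces to the short explicit computation indicated above together with a finite termination argument.
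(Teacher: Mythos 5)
Your proposal is correct, and it takes a genuinely different route from the paper. The paper does not really prove the lemma at all: it quotes a statement from B.~W.~Jones's unpublished 1928 thesis (that $x^2+ky^2=N$ solvable implies solvable with $(x,y,p)=1$ for a prime $p$ represented by $x^2+ky^2$) and asserts that the lemma ``follows immediately,'' leaving to the reader both the passage from $\gcd(\tilde x,\tilde y,3)=1$ to $\tilde x\not\equiv\tilde y\Mod 3$ and the verification of the congruences modulo $4$ and $2$. You instead give a self-contained descent in $R=\z[\sqrt{-2}]$: your identifications are all correct ($\sqrt{-2}\equiv-1\Mod{\pi}$, so $\pi\mid z$ iff $3\mid x-y$; $u=\bar\pi/\pi=(-1-2\sqrt{-2})/3$ has norm one; $uz=\frac{-x+4y}{3}+\frac{-2x-y}{3}\sqrt{-2}\in R$ exactly when $\pi\mid z$), the substitution $x=y+3k$ does give new coordinates $(y-k,\,-y-2k)$ with $y-k\equiv x\Mod 4$ and $-y-2k\equiv y\Mod 2$, and since $\ord_{\pi}(uz)=\ord_{\pi}(z)-1$ (as $\ord_{\pi}(\bar\pi)=0$) the iteration terminates at some $\tilde z$ with $\pi\nmid\tilde z$, i.e.\ $\tilde x\not\equiv\tilde y\Mod 3$, with norm and the two congruences preserved throughout. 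What your approach buys is a complete, checkable proof of exactly the refined statement needed, independent of an inaccessible source; in effect you have reproved Jones's result for $k=2$ (where $\z[\sqrt{-2}]$ is a PID) while additionally tracking the $2$-adic data that the quoted statement does not mention. What the paper's approach buys is only brevity.
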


We also need the following lemma which appeared in the middle of the proof of \cite[Theorem 3.1]{O1}.

\begin{lem} \label{lemeigen}
Let $S\in M_3(\z)$ be a positive-definite symmetric matrix and let $T\in M_3(\q)$ such that ${}^tTST=S$. Let $(u,v,w)\in \z^3$ and define
$$
\begin{pmatrix} u_n\\v_n\\w_n \end{pmatrix}=T^n\begin{pmatrix} u\\v\\w\end{pmatrix},\quad n=1,2,3,\cdots.
$$
Assume that 
\begin{enumerate} [(i)]
\item $T$ has an infinite order.
\item $(u_n,v_n,w_n)\in \z^3$ for any $n$.
\end{enumerate}
Then $(u,v,w)\in {\rm ker} \left( T-{\rm det} (T)I\right)$ and ${\rm dim}_{\mathbb{R}}{\rm ker}\left((T-{\rm det}(T)I)\right)=1$.
\end{lem}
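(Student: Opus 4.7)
The plan is to exploit the identity ${}^t T S T = S$, which says that $T$ is an isometry of the positive-definite bilinear form $\mathbf{x} \mapsto {}^t \mathbf{x} S \mathbf{x}$ on $\mathbb{R}^3$. Setting $U := S^{1/2} T S^{-1/2}$, a direct computation gives ${}^t U U = I$, so $T$ is similar over $\mathbb{R}$ to the orthogonal matrix $U$. Consequently $T$ is diagonalizable over $\mathbb{C}$, all of its eigenvalues have absolute value $1$, and $\det T = \pm 1$.

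Next I would pin down the list of eigenvalues. Since the characteristic polynomial of $T$ has rational coefficients and degree three, the eigenvalues are either all real (in which case they lie in $\{\pm 1\}$) or consist of a real eigenvalue together with a non-real complex-conjugate pair $e^{\pm i \theta}$ with $\theta \in (0, \pi)$. The first case combined with diagonalizability would force $T^{2} = I$, contradicting hypothesis (i); hence the eigenvalues are $\varepsilon, e^{i\theta}, e^{-i\theta}$ with $\varepsilon = \det T \in \{\pm 1\}$. I would then argue that $e^{i\theta}$ is not a root of unity: otherwise $T^{N}$ would be diagonalizable with all three eigenvalues equal to $1$ for some $N > 0$, again contradicting (i). This already gives the dimension assertion, because $\varepsilon \ne e^{\pm i \theta}$ forces $\ker(T - \det(T) I)$ to be the one-dimensional real eigenspace of $T$.

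For the containment, set $\mathbf{x}_{0} = (u, v, w)^{t}$ and consider the orbit $\{T^{n} \mathbf{x}_{0} : n \ge 1\}$. Since $T$ preserves the form $S$, every vector in the orbit satisfies ${}^t(T^{n} \mathbf{x}_{0}) S (T^{n} \mathbf{x}_{0}) = {}^t \mathbf{x}_{0} S \mathbf{x}_{0}$, so the orbit lies inside the compact ellipsoid $\{ \mathbf{x} \in \mathbb{R}^{3} : {}^t \mathbf{x} S \mathbf{x} = {}^t \mathbf{x}_{0} S \mathbf{x}_{0} \}$. Hypothesis (ii) simultaneously places the orbit inside $\z^{3}$, and a bounded subset of $\z^{3}$ is finite. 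Therefore $T^{n} \mathbf{x}_{0} = T^{m} \mathbf{x}_{0}$ for some $n > m \ge 0$, and setting $k := n - m > 0$ yields $\mathbf{x}_{0} \in \ker(T^{k} - I)$.

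Finally, the kernel $\ker(T^{k} - I)$ is analyzed by the same eigenvalue calculation as before: the eigenvalues of $T^{k}$ are $\varepsilon^{k}, e^{i k \theta}, e^{-i k \theta}$, and the latter two differ from $1$ because $e^{i\theta}$ is not a root of unity. If $\varepsilon^{k} = 1$, then $\ker(T^{k} - I) = \ker(T - \varepsilon I) = \ker(T - \det(T) I)$ and we are done. If $\varepsilon^{k} \ne 1$ (which can only happen when $\det T = -1$ and $k$ is odd), then $\ker(T^{k} - I) = 0$, forcing $\mathbf{x}_{0} = \mathbf{0}$, which lies trivially in $\ker(T - \det(T) I)$. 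The main obstacle throughout is simply keeping the eigenvalue book-keeping straight across the two sign choices for $\det T$; the substantive content is the non-roots-of-unity observation, which is precisely where hypothesis (i) enters.
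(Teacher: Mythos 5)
Your proof is correct. The paper does not actually supply a proof of this lemma---it is quoted from the middle of the proof of Theorem 3.1 in \cite{O1}---and your argument is essentially the standard one used there: since ${}^tTST=S$ with $S$ positive definite, the orbit $\{T^n\mathbf{x}_0\}$ lies on a compact ellipsoid, so integrality forces it to be finite and hence periodic, giving $T^k\mathbf{x}_0=\mathbf{x}_0$; the infinite-order hypothesis then pins the spectrum down to $\{\det T,\, e^{\pm i\theta}\}$ with $e^{i\theta}$ not a root of unity, which yields both the containment and the one-dimensionality of $\ker(T-\det(T)I)$.
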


In the following 5 consecutive propositions, we prove the regularities of 5 candidates, 
all of whose corresponding quadratic forms are not regular(see \cite{JP}).

\begin{prop} \label{prop149}
The ternary triangular form $\D(1,4,9)$ is regular.
\end{prop}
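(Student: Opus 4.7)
The plan is to take any positive integer $n$ that is locally represented by $\Delta(1,4,9)$, set $m=8n+14$, and produce integers $x,y,z$ with $x^2+4y^2+9z^2=m$ and $x,y,z$ all odd. First I would dispense with the parity condition: since $m\equiv 6\Mod{8}$ and $x^2,9z^2\in\{0,1,4\}\Mod 8$ while $4y^2\in\{0,4\}\Mod 8$, a small case check shows that any integer solution of $x^2+4y^2+9z^2=m$ is automatically forced to have $x,y,z$ all odd. Hence it suffices to show $m\longrightarrow L$, where $L=\langle 1,4,9\rangle$.

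By the local hypothesis we have $m\longrightarrow \gen(L)$. Since $L$ is itself not regular (see \cite{JP}), I cannot conclude $m\longrightarrow L$ directly from local data. My approach is to exploit the descent $\lambda_3(L)=\langle 1,1,4\rangle$, whose triangular form $\Delta(1,1,4)$ is one of Liouville's universal forms and in particular represents every positive integer. Thus $m$ admits some representation $m=u^2+v^2+4w^2$; if one can arrange $3\mid v$, then $m=u^2+9(v/3)^2+4w^2$ is a representation of $m$ by $L$, and we are done. The heart of the proof is therefore to show that, starting from an arbitrary representation of $m$ by $\langle 1,1,4\rangle$, one can always find another representation in which the middle coordinate is divisible by $3$.

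To perform this adjustment, I would construct an isometry $T$ of the rational quadratic space $(\q^3,\,x^2+y^2+4z^2)$ of infinite order, whose existence is ensured because the binary subspace governing the interchange of 3-divisibility supports a Pell-type unit. Iterating $T$ on the vector $(u,v,w)$ produces an infinite sequence of integral representations of $m$ by $\langle 1,1,4\rangle$. If none of the iterates has second coordinate divisible by $3$, then Lemma \ref{lemeigen} forces $(u,v,w)$ to lie on the one-dimensional kernel of $T-\det(T)I$; I would then show that this eigenvector constraint is incompatible with the local representability of $m$ by $L$ at the prime $3$ (combined with the already-noted constraint at $2$). Consequently some iterate produces a representation with $3\mid v$, yielding $m\longrightarrow L$ as required.

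The main obstacle, and the place I expect the bookkeeping to be most delicate, is the construction of the isometry $T$ together with the eigenvector/contradiction step: one must match up the 3-adic local information carried by $m\longrightarrow \gen(L)$ with the very rigid algebraic constraint extracted from Lemma \ref{lemeigen}, and verify that the only way the orbit under $T$ could fail to meet the condition $3\mid v$ is through a vector that cannot correspond to a $3$-adically representable $m$. Once this conflict is made precise, the proof of the proposition follows.
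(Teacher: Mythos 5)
Your reduction to showing $m\longrightarrow L$ is fine: the parity check for $m\equiv 6\Mod 8$ is exactly the paper's observation that $R(\ell,L)=R_{(1,1,1)}(\ell,L)$. But the core of your argument --- producing a representation $m=u^2+v^2+4w^2$ with $3\mid uv$ by iterating an isometry $T$ and invoking Lemma \ref{lemeigen} --- has a genuine gap, in two places. First, the isometry you describe does not exist as described: you say its existence ``is ensured because the binary subspace governing the interchange of $3$-divisibility supports a Pell-type unit,'' but every binary coordinate subform of $x^2+y^2+4z^2$ (namely $x^2+y^2$, $x^2+4z^2$, $y^2+4z^2$) is anisotropic over $\q_3$, since $-1$ and $-4$ are non-squares modulo $3$; hence each has only finitely many isometries over $\z[1/3]$ and there is no Pell-type unit to iterate. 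Any useful $T$ must be a genuinely ternary isometry with denominator a power of $3$, and its existence and properties have to be exhibited explicitly.

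Second, and more seriously, Lemma \ref{lemeigen} requires that \emph{every} iterate $T^n(u,v,w)$ be integral, so the ``bad'' set must be carried by $T$ into integral vectors lying again in the bad set. Your bad condition ``$3\nmid uv$'' is a condition modulo $3$ only; after normalizing signs it reads $(u,v,w)\equiv(1,1,0)\Mod 3$ (note $3\mid w$ is forced when $m\equiv 2\Mod 3$), and for $T=\frac13 A$ with $A(1,1,0)^t\equiv 0\Mod 3$ one computes $T(u,v,w)\equiv \frac13A(1,1,0)^t+A(r,s,t)^t \Mod 3$ where $(u,v,w)=(1,1,0)+3(r,s,t)$; whether the image is integral and again bad therefore depends on $(u,v,w)$ modulo $9$, not modulo $3$. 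So the iteration cannot be closed up at the level of your mod-$3$ condition: a refinement modulo $9$, together with explicit integral substitutions disposing of the complementary residue classes, is indispensable. This is precisely what the paper supplies: it works with the genus mate $K=\langle 1,1,36\rangle$ (equivalently, with your representations having $3\mid w$), treats $\ell\equiv 0,1\Mod 3$ by a direct transfer, and for $\ell\equiv 2\Mod 3$ uses the condition ``$x\equiv\pm y\Mod 9$ and $z\equiv 0\Mod 3$'' together with a denominator-$9$ isometry of $M_K$; moreover the final contradiction from the eigenvector is $2$-adic ($\ell=2k^2$ is incompatible with $\ell\equiv 6\Mod 8$), not the $3$-adic incompatibility you anticipate. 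Until you construct $T$, verify the forward-invariance of a suitable mod-$9$ bad set, and identify the actual contradiction, the proof is incomplete.
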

\begin{proof}
Let $L=\langle 1,4,9 \rangle$ be a ternary quadratic form and let $\ell=8n+14$ be an integer such that $\ell \longrightarrow \gen(L)$. 
One may easily check that $R(\ell ,L)=R_{(1,1,1)}(\ell ,L)$. 
Thus it suffices to show that $\ell \longrightarrow L$. 
Since
$$
\gen(L)=\left\{ L,K=\langle 1,1,36\rangle \right\},
$$
we may assume that $\ell \longrightarrow K$.

First, assume that $\ell\equiv 0,1\Mod 3$. 
Since $\ell \longrightarrow K$, there is a vector $(x,y,z)\in \z^3$ such that $x^2+y^2+36z^2=\ell$. 
We have $x\equiv 0\Mod 3$ or $y\equiv 0\Mod 3$ and thus $\ell \longrightarrow \langle 1,9,36\rangle \longrightarrow L$.

Now, assume that $\ell \equiv 2\Mod 3$. 
We assert that there is a vector $(x_1,y_1,z_1)\in R(\ell,K)$ such that $x_1\not\equiv \pm y_1\Mod 9$ or $z_1\not\equiv 0\Mod 3$.
Assume to the contrary that there is no such vector. 
Then, we may assume that there is a vector $(u,v,w)\in R(\ell,K)$ such that $u\equiv v\Mod 9$ and $w\equiv 0\Mod 3$.
Let
$$
T=\frac19 \begin{pmatrix} 3&6&36\\6&3&-36\\-1&1&-3 \end{pmatrix}.
$$
Note that 
$$
M_K=\begin{pmatrix} 1&0&0\\0&1&0\\0&0&36 \end{pmatrix}\quad \text{and} \quad
{}^tTM_KT=M_K.
$$
If we let 
$$
\begin{pmatrix} u_1 \\ v_1 \\ w_1 \end{pmatrix}
=T \begin{pmatrix} u \\ v \\ w \end{pmatrix},
$$
then one may check that $(u_1,v_1,w_1)\in \z^3$ and thus $(u_1,v_1,w_1)\in R(\ell,K)$. 
Thus $u_1\equiv \pm v_1\Mod 9$ and $w_1\equiv 0\Mod 3$ by assumption.
Since
$$
u_1-v_1=\frac{-u+v}{3}+8w\equiv 0\Mod 3,
$$
we have $u_1\equiv v_1\Mod 9$. 
From this, one may easily check that $T$ satisfies all conditions given in Lemma \ref{lemeigen} with $S=M_K$, and thus we have $(u,v,w)\in \text{ker}(T-I)$. 
Since $\text{ker}(T-I)=\langle (1,1,0)\rangle$, we have
$(u,v,w)=k(1,1,0)$ for some integer $k$ and $u^2+v^2+36w^2=2k^2$.
This is a contradiction to the fact that $\ell \equiv 6\Mod 8$, and we may conclude
that there is a vector $(x_2,y_2,z_2)\in R(\ell,K)$ such that 
$$
x_2\not\equiv \pm y_2\Mod 9 \ \  \text{or}  \ \ z_2\not\equiv 0\Mod 3.
$$ 
By changing signs of $x_2,y_2,z_2$ and by interchanging the role of $x_2$ and $y_2$, if necessary, 
 we may assume that there is a vector $(x_3,y_3,z_3)\in R(\ell,K)$ such that 
$2x_3+y_3+12z_3\equiv 0\Mod 9$. 
If we let 
$$
(x_4,y_4,z_4)=\left( \displaystyle\frac{x_3+2y_3-12z_3}{3},\frac{x_3-y_3-3z_3}{3},\frac{2x_3+y_3+12z_3}{9} \right),
$$
then one may easily show that 
$(x_4,y_4,z_4)\in R\left(\ell,L\right)$. This completes the proof. 
\end{proof}

\begin{prop}
The ternary triangular form $\D(1,3,27)$ is regular.
\end{prop}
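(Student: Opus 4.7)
My plan is to mirror the structure of the preceding proof of Proposition~\ref{prop149}. Set $L = \langle 1,3,27\rangle$ and let $\ell = 8n+31$ be an integer with $\ell \longrightarrow \gen(L)$; the goal is to produce a representation $x^2+3y^2+27z^2 = \ell$ with $xyz$ odd. First I would analyze the genus of $L$. Using the Jordan decomposition at $p=3$ (where $L\otimes\z_3 \simeq \langle 1\rangle\perp\langle 3\rangle\perp\langle 27\rangle$ is anisotropic) together with the observation that the form is split at every other prime, a direct class-number calculation should show $\gen(L)$ consists of a single class, so $\ell \longrightarrow L$ outright. (If a second genus mate $K$ appears, I would treat it separately, producing a representation by $L$ from one by $K$ by the same scheme used for $\Delta(1,4,9)$.)

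Given $\ell = x^2+3y^2+27z^2$ with $(x,y,z)\in\z^3$, the main task is to upgrade this to a solution with $xyz$ odd. A mod-$8$ analysis, using $\ell\equiv 7\pmod 8$, leaves only three parity types: (I) all three of $x,y,z$ odd; (II) only $y$ odd, forcing $x^2+27z^2\equiv 4\pmod 8$; (III) only $z$ odd, forcing $x^2+3y^2\equiv 4\pmod 8$. I would split by $\ell\bmod 3$. When $3\nmid\ell$, necessarily $3\nmid x$ in every solution, and by combining sign changes with the Jordan-swap $(x,y,z)\mapsto (x,3z,y/3)$ (valid when $3\mid y$, which mod-$9$ reduction can always arrange after possibly a preliminary move) one converts any type (II) or (III) solution to type (I).

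The substantive case is $3\mid\ell$. Assuming for contradiction that no solution of type (I) exists, I would construct a rational isometry $T\in M_3(\q)$ of $L$, i.e.\ $^tT\,\text{diag}(1,3,27)\,T = \text{diag}(1,3,27)$, of infinite order, chosen so that $T$ carries a type (II) (or type (III)) representation to another of the same type and so that $Tv\in\z^3$ precisely on the bad parity set. A natural candidate is built as a reflection through a short vector such as $w=(3,1,0)$ (of norm $12$), composed with a sign flip on the third coordinate: one checks directly that $^tTM_LT=M_L$ and that $T$ has infinite order. Then iterating $T$ on a would-be counterexample $(u,v,w)$ keeps it in the bad parity class and hence integral, so hypotheses (i) and (ii) of Lemma~\ref{lemeigen} are met; the conclusion places $(u,v,w)$ in $\ker(T-\det(T)I)$, a one-dimensional rational subspace. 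Writing $(u,v,w)=k\cdot v_0$ for a specific $v_0$ reduces $\ell=k^2 Q(v_0)$ to a sparse arithmetic progression, and a final congruence check mod $8$ (or mod $27$) contradicts $\ell\equiv 7\pmod 8$ coupled with the type (II)/(III) parity restriction.

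The main obstacle is pinpointing the isometry $T$: its entries must have $2$-adic denominators that cancel exactly on vectors of the bad parity type, it must have infinite order, and the fixed direction $v_0$ must have $Q(v_0)$ incompatible with $\ell\pmod 8$. If a single $T$ does not suffice, I would build two such isometries, one swapping types (II) and (III) and another preserving type (II), and apply Lemma~\ref{lemeigen} to each, as the earlier proof of Proposition~\ref{prop149} effectively does. A secondary, more routine check is the class-number verification for $\gen(L)$; should it fail, the same isometry machinery applied to any genus mate should still yield the result.
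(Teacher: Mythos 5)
Your overall strategy (a parity analysis mod $8$, followed by an infinite-order rational isometry fed into Lemma~\ref{lemeigen} to corner the bad representations into a one-dimensional kernel) is the same as the paper's, and your classification of the parity types for $\ell\equiv 7\pmod 8$ is correct. But there are two genuine gaps. First, the genus of $\langle 1,3,27\rangle$ is not one-class: it contains $L$ and $K=\langle 3\rangle\perp\begin{pmatrix}4&1\\1&7\end{pmatrix}$. The paper does not transfer representations from $K$ to $L$ by an explicit coordinate scheme as in Proposition~\ref{prop149}; it invokes \cite[Theorem 2.3]{O1}, which asserts that every integer $\equiv 7\pmod 8$ represented by $K$ is also represented by $L$. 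Your fallback of "the same scheme used for $\Delta(1,4,9)$" is not obviously available and would have to be supplied.

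Second, and more seriously, your endgame fails. The fixed direction of the relevant isometry is $v_0=(2,-1,0)$ with $Q(v_0)=7$, so Lemma~\ref{lemeigen} only yields $\ell=7k^2$ with $|k|>1$ and $(k,6)=1$ --- and $7k^2\equiv 7\pmod 8$ for odd $k$, which is perfectly consistent with $\ell\equiv 7\pmod 8$; no contradiction arises mod $27$ either. This is exactly where $\Delta(1,3,27)$ differs from $\Delta(1,4,9)$, whose kernel vector has norm $2$ so that $2k^2\not\equiv 6\pmod 8$ kills the bad case outright. The paper instead turns the conclusion $\ell=7q^2s^2$ (with $q\ge 5$ prime) to its advantage: it rewrites $r_{(1,1,1)}(\ell,L)$ as $\frac{2}{3}\,r\bigl(\ell,M_1\bigr)$ for the sublattice $M_1=\langle 27\rangle\perp\begin{pmatrix}4&2\\2&4\end{pmatrix}$, notes that $\gen(M_1)$ splits into two spinor genera with $7\longrightarrow M_2\in\spn(M_1)$, and applies the Benham--Hsia spinor-genus representation theorem to conclude $7q^2s^2\longrightarrow M_1$, hence $r_{(1,1,1)}(\ell,L)>0$. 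This spinor-genus step is an essential extra idea that your proposal does not anticipate and that cannot be replaced by a congruence check.
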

\begin{proof}
Let $L=\langle 1,3,27 \rangle$ be a ternary quadratic form and let $\ell=8n+31$ be an integer such that $\ell \longrightarrow \gen(L)$. 
Note that
$$
\gen(L)=\left\{ L,K= \langle 3 \rangle \perp 
\begin{pmatrix} 4 & 1 \\ 1 & 7 \end{pmatrix} \right\}.
$$
By \cite[Theorem 2.3]{O1} one may show that any integer congruent to 7 modulo 8 that is represented by $K$ is also represented by $L$. Therefore, $\ell$ is represented by $L$.  Note that if 
$x^2+3y^2+27z^2=\ell$, then 
$$
(x^2,3y^2,27z^2)\equiv (1,3,3),(0,4,3),(4,0,3),(0,3,4)\ \ \text{or}\ \ (4,3,0) \Mod 8.
$$
Therefore, if there is a vector $(x,y,z)\in R(\ell,L)$ with $x\equiv y\Mod 2$, then we are done by Lemma \ref{lem13}. 
Thus we may assume that for any $(x,y,z)\in R(\ell,L)$,
$$
y\equiv 1\Mod 2,\ x\equiv z\equiv 0\Mod 2 \ \ \text{and} \ \ x\not\equiv z\Mod 4.
$$

Suppose  that $xy\not\equiv 0\Mod 3$ for any $(x,y,z)\in R\left(\ell,L\right)$.
Let $(u,v,w)\in R\left(\ell,L\right)$ with $u\equiv v\Mod 3$. 
For a rational isometry 
$$
T=\frac{1}{12} \begin{pmatrix} -3 & 18 & -27 \\ 6 & 0 & -18 \\ 1 & 2 & 9 \end{pmatrix},
$$
of $M_L$, we apply Lemma \ref{lemeigen}. Then we have $(u,v,w)\in \text{ker}(T+I)$.
Since $\text{ker}(T+I)=\langle (2,-1,0) \rangle$, we have $(u,v,w)= k(2,-1,0)$ for some integer $k$. One may easily check that $\vert k\vert >1$ and $(k,6)=1$. 
Hence there is a prime $q\ge 5$ such that $k=qs$ and $s\in \z$. Then 
\begin{equation} \label{eq1327}
\ell=u^2+3v^2+27w^2=7q^2s^2.
\end{equation}
On the other hand,
$$
r_{(1,1,1)}\left(\ell,L\right)=\frac23 r\left(\ell,(y-2x)^2+3y^2+27z^2\right)
=\frac23 r\left(\ell,\langle 27 \rangle \perp \begin{pmatrix} 4 & 2 \\ 2 & 4  \end{pmatrix} \right).
$$
If we let $M_1=\langle 27 \rangle \perp \begin{pmatrix} 4 & 2 \\ 2 & 4  \end{pmatrix}$, then
$$
\gen(M_1)=\left\{ M_1,M_2=\begin{pmatrix} 7&1&1\\1&7&1\\1&1&7 \end{pmatrix},
M_3=\langle 3 \rangle \perp \begin{pmatrix} 4&2\\2&28 \end{pmatrix} \right\}, \quad
\spn(M_1)=\{ M_1,M_2 \}.
$$
Note that $7\longrightarrow M_2$. By \cite[Proposition 1]{BH},
we have $7q^2\longrightarrow M_1$ and thus $\ell=7q^2s^2\longrightarrow M_1$.
Thus $r_{(1,1,1)}(\ell,L)>0$ and we are done with this case.

Now, suppose that there is a vector 
$(x_1,y_1,z_1)\in R\left(\ell,L\right)$ such that $x_1y_1\equiv 0 \Mod 3$. 
We define
$$
(x_2,y_2,z_2)=\begin{cases}\displaystyle \left(\frac{x_1+9z_1}{2},y_1,\frac{-x_1+3z_1}{6}\right) \quad \text{if $x_1\equiv 0\Mod 3$},\\[1.2em]
\displaystyle \left( \frac{x_1+9z_1}{2},\frac{-x_1+3z_1}{2},\frac{y_1}{3} \right) \quad \text{otherwise}.
\end{cases}
$$
Then, one may easily check that $(x_2,y_2,z_2)\in R_{(1,1,1)}\left(\ell,L\right)$. 
\end{proof}

\begin{prop}
The ternary triangular form $\D(1,6,27)$ is regular.
\end{prop}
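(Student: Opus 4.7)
My plan follows the template used in the proofs of Proposition~\ref{prop149} and the preceding proposition on $\D(1,3,27)$. Set $L=\langle 1,6,27\rangle$ and let $\ell=8n+34$ be an integer with $\ell\longrightarrow \gen(L)$; the goal is to show $R_{(1,1,1)}(\ell,L)$ is nonempty. First I would compute $\gen(L)$ explicitly. In view of the discriminant $162=2\cdot 3^4$ and the local structure at $2$ and $3$, I expect the genus to consist of $L$ together with at most one other class $K$, and any integer represented by such a $K$ should be transferable to $L$ either because the class number of $L$ is $1$, or via an argument analogous to \cite[Theorem~2.3]{O1} or an explicit isometric substitution as in the proof of Proposition~\ref{prop149}.

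Next, a direct parity analysis modulo $8$ of the equation $x^2+6y^2+27z^2=\ell\equiv 2\pmod 8$ shows that any integral solution has $y$ odd and $(x,z)$ either (a) both odd, or (b) both even with distinct residues modulo $4$. A pleasant observation is that whenever $z$ is odd one has $\ell-27z^2\equiv 7\pmod 8$, and a case check modulo $8$ shows that every integer solution of $x^2+6y^2=\ell-27z^2$ automatically has both $x$ and $y$ odd. Hence it suffices to produce one odd integer $z$ with $\ell-27z^2\longrightarrow \langle 1,6\rangle$.

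The heart of the argument is to rule out the scenario in which every $(x,y,z)\in R(\ell,L)$ is of type~(b). Assuming this, I would construct a rational isometry $T\in M_3(\q)$ of $L$, found by solving $T^{\top}M_LT=M_L$ in the spirit of the matrices $T$ appearing in the two preceding proofs, whose action on a type-(b) vector either produces a type-(a) vector (contradiction) or fixes the vector up to sign. Lemma~\ref{lemeigen} then forces every bad $(x,y,z)$ to lie in the $1$-dimensional subspace $\ker(T-\det(T)I)$ spanned by a fixed integer vector $\mathbf{v}_0$; writing $(x,y,z)=k\mathbf{v}_0$ yields $\ell=k^2Q(\mathbf{v}_0)$, and modular analysis of this equation (modulo $8$, and modulo the odd prime divisors of $Q(\mathbf{v}_0)$) should contradict $\ell\longrightarrow \gen(L)$. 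An alternative route is to construct an auxiliary form $M$ via a substitution exploiting Lemma~\ref{lem12} (naturally suggested by writing $x^2+6y^2=x^2+2\cdot 3y^2$) so that $r_{(1,1,1)}(\ell,L)$ is a positive multiple of $r(\ell,M)$, bypassing the isometry argument entirely.

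The main obstacle is the explicit construction of the rational isometry $T$, or equivalently the correct auxiliary form $M$ and substitution. Because $L$ carries the $3$-adic factor $27=3^3$, its orthogonal group is locally richer at $3$, so finding a $T$ that also interacts cleanly with parity at $2$ requires a careful choice of coordinates. A further complication is that the analysis is likely to split into subcases by $\ell\pmod 3$ or by $\ord_3(\ell)$, each requiring its own variant of the isometry or its own auxiliary form.
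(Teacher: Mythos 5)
Your setup (genus transfer, the mod-$8$ parity analysis, and the plan to push ``bad'' representations into a one-dimensional eigenspace via Lemma~\ref{lemeigen}) matches the paper's strategy, but there is a decisive gap in your endgame. When every $(x,y,z)\in R(\ell,L)$ is forced onto the line $\ker(T-\det(T)I)$, that line is spanned by $\mathbf v_0=(2,-1,0)$, so $\ell=k^2Q(\mathbf v_0)=10k^2$ with $(k,6)=1$, $|k|>1$. This does \emph{not} contradict $\ell\longrightarrow\gen(L)$: integers of the form $10q^2s^2$ are genuinely locally (and globally) represented, exactly as $\ell=7q^2s^2$ survives in the preceding proof for $\D(1,3,27)$. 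The paper's resolution of this residual case is not a modular contradiction but a positive representation result: it rewrites $r_{(1,1,1)}(\ell,L)=2\,r\bigl(\ell,\langle 6\rangle\perp\left(\begin{smallmatrix}16&4\\4&28\end{smallmatrix}\right)\bigr)$, observes that this auxiliary form $M_1$ has $\gen(M_1)=\spn(M_1)=\{M_1,\langle 4,6,108\rangle\}$ with $10\longrightarrow\langle 4,6,108\rangle$, and invokes the Benham--Hsia spinor-genus theorem \cite{BH} to conclude $10q^2s^2\longrightarrow M_1$. Without this spinor-exceptional-integer ingredient your argument cannot close, and your proposed ``alternative route'' of replacing the whole isometry argument by a single identity $r_{(1,1,1)}(\ell,L)=c\cdot r(\ell,M)$ fails for the same reason: the relevant $M$ has class number $2$, so $r(\ell,M)>0$ is not automatic for all locally represented $\ell$.

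A second, smaller shortfall is that the reduction to the eigenvector situation is substantially longer than one application of a single isometry. The paper runs a chain of four successive eliminations --- solutions with $x\equiv 0\Mod 3$, then $y\equiv 0\Mod 3$ (this step needs Lemma~\ref{lem12} applied to $x^2+27(2y'^2+z^2)=\ell$), then $y\not\equiv\pm 4x\Mod 9$ or $z\not\equiv 0\Mod 3$, then $z\not\equiv 0\Mod 9$ --- each disposed of by its own explicit integral substitution producing a vector in $R_{(1,1,1)}(\ell,L)$, and only the surviving case ($xy\not\equiv 0\Mod 3$, $y\equiv\pm4x\Mod 9$, $z\equiv 0\Mod 9$) is fed into Lemma~\ref{lemeigen}. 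You correctly flag this as the main obstacle, so I count it as incompleteness rather than error; the missing spinor-genus step in the eigenvector case is the part that would actually sink the proof as proposed.
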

\begin{proof}
Let $L=\langle 1,6,27 \rangle$ be a ternary quadratic form and let $\ell=8n+34$ be an integer such that $\ell \longrightarrow \gen(L)$. 
Note that
$$
\gen(L)=\left\{ L,K=\langle 6 \rangle \perp \begin{pmatrix} 4&1\\1&7 \end{pmatrix} \right\}.
$$
Since $\lambda_2(L)\simeq \langle 3 \rangle \perp 
\begin{pmatrix} 2&1\\1&14 \end{pmatrix} \simeq \lambda_2(K)$, 
we have 
$$
r(\ell,L)=r\left( \frac{\ell}2, \langle 3 \rangle \perp 
\begin{pmatrix} 2&1\\1&14 \end{pmatrix}\right) =r(\ell,M)
$$
and thus $\ell \longrightarrow L$.
If $(x,y,z)\in R\left(\ell,L\right)$, then
$$
(x^2,6y^2,27z^2)\equiv (0,6,4),(4,6,0)\text{ or } (1,6,3) \Mod 8.
$$
Thus we may assume that for any $(x,y,z)\in R\left(\ell,L\right)$, 
$$
y\equiv 1\Mod 2,\ \ x\equiv z\equiv 0\Mod 2,\ \ \text{and}\ \ x\not\equiv z\Mod 4.
$$

First, suppose that there is a vector $(x_1,y_1,z_1)\in R(\ell,L)$ with $x_1\equiv 0\Mod 3$.
If we let 
$$
(x_2,y_2,z_2)=\left(\frac{x_1+9z_1}{2},y_1,\frac{-x_1+3z_1}{6} \right),
$$
then one may easily check that $(x_2,y_2,z_2)\in R_{(1,1,1)}(\ell,L)$. 
Hence we may further assume that for any $(x,y,z)\in R(\ell,L)$, $x\not\equiv 0\Mod 3$. 

Now, suppose that there is a vector $(x_3,y_3,z_3)\in R(\ell,L)$ with 
$y_3\equiv 0\Mod 3$. Let $y_3=3y_3'$. 
Then we have $x_3^2+27(2y_3'^2+z_3^2)=\ell$.
Since $y_3'\equiv 1\Mod 2$, we have $2y_3'^2+z_3^2\neq 0$. 
By Lemma \ref{lem12}, there is a vector $(x_4,y_4,z_4)\in \z^3$ with $y_4\not\equiv z_4\Mod 3$ such that 
$x_4^2+27(2y_4^2+z_4^2)=\ell$. Thus $(x_4,3y_4,z_4)\in R(\ell,L)$ such that 
$y_4\not\equiv 0\Mod 3$ or $z_4\not\equiv 0\Mod 3$. By changing signs of
$x_4,y_4,z_4$, if necessary, we may assume that $x_4\equiv y_4+z_4 \Mod 3$.
If we let
$$
(x_5,y_5,z_5)=\left( \frac{x_4+12y_4+3z_4}{2},\frac{-3y_4+6z_4}{3},\frac{-3x_4+12y_4+3z_4}{18} \right),
$$
then one may easily check that $(x_5,y_5,z_5)\in R_{(1,1,1)}(\ell,L)$. 
Therefore, we further assume that for any $(x,y,z)\in R(\ell,L)$, $xy\not\equiv 0\Mod 3$.

Suppose that there is a vector $(x_6,y_6,z_6)\in R(\ell,L)$ such that 
$y_6\not\equiv \pm 4x_6\Mod 9$ or $z_6\not\equiv 0\Mod 3$. 
Then one may check that by changing signs of $x_6,y_6,z_6$, if necessary, we may assume that
$$
x_6+y_6-3z_6\equiv 0\Mod 9 \text{ or } x_6-4y_6-3z_6\equiv 0\Mod 9.
$$
If $x_6+y_6-3z_6\equiv 0\Mod 9$, then we define
$$
(x_7,y_7,z_7)=\left( \frac{x_6+9z_6}{2},\frac{-x_6-y_6+3z_6}{3},\frac{-x_6+8y_6+3z_6}{18} \right).
$$
If $x_6-4y_6-3z_6\equiv 0\Mod 9$, then we define
$$
(x_7,y_7,z_7)=\left( \frac{x_6+4y_6+3z_6}{2},\frac{-x_6+y_6+3z_6}{3},\frac{x_6-4y_6+15z_6}{18} \right).
$$
Then one may easily check that $(x_7,y_7,z_7)\in R_{(1,1,1)}(\ell,L)$ in each case. 
Now, we further assume that for any $(x,y,z)\in R(\ell,L)$,
\begin{equation} \label{assump4}
y\equiv \pm 4x\Mod 9\ \ \text{and} \ \ z\equiv 0\Mod 3.
\end{equation}

Suppose that there is a vector $(x_8,y_8,z_8)\in R(\ell,L)$ such that $z_8\not\equiv 0\Mod 9$. 
By changing signs of $y_8$ and $z_8$, if necessary, we may assume that $y_8\equiv 4x_8\Mod 9$ and 
$\displaystyle\frac{x_8-y_8}{3}+z_8\not\equiv \pm 4x_8\Mod 9$. 
If we let
$$
(x_9,y_9,z_9)=\left( 2y_8+3z_8,\frac{x_8-y_8+3z_8}{3},\frac{-x_8-2y_8+6z_8}{9} \right),
$$
then $(x_9,y_9,z_9)\in R(\ell,L)$ and $y_9\not\equiv \pm 4x_9\Mod 9$. This contradicts to our assumption \eqref{assump4}.
Therefore, we further assume that for any $(x,y,z)\in R(\ell,L)$, 
$$
y\equiv \pm 4x\Mod 9\ \ \text{and}\ \ z\equiv 0\Mod 9.
$$ 
Take a vector $(u,v,w)\in R(\ell,L)$ with $u\equiv v\Mod 3$ so that $u+2v+6w\equiv 0\Mod 9$. 
If we let
$$
T=\frac19 \begin{pmatrix} 0&18&-27\\3&-3&-9\\1&2&6 \end{pmatrix},
$$
then one may easily check that
$$
M_L=\begin{pmatrix} 1&0&0\\0&6&0\\0&0&27 \end{pmatrix}\quad \text{and}\quad {}^tTM_LT=M_L.
$$
If we let
$$
\begin{pmatrix} u_1 \\ v_1 \\ w_1 \end{pmatrix}
=T\begin{pmatrix} u \\ v \\ w \end{pmatrix},
$$
then clearly, $(u_1,v_1,w_1)\in \z^3$, and thus 
$(u_1,v_1,w_1)\in R(\ell,L)$. 
Note that $u_1-v_1\equiv 0\Mod 3$. 
From this, one may show that $T$ satisfies all conditions given in Lemma \ref{lemeigen} with $S=M_L$, and thus we have $(u,v,w)\in \text{ker}(T+I)$. 
Since $\text{ker}(T+I)=\langle (2,-1,0) \rangle$,
we have $(u,v,w)=k(2,-1,0)$ for some integer $k$ with $\vert k \vert >1$ and $(k,6)=1$. 
Thus there is a prime divisor $q\ge 5$ of $k$. Now $\ell=10q^2s^2$ for some odd integer $s$. 
Note that
$$
r_{(1,1,1)}(\ell,L)=2r\left(\ell,(z-4x)^2+6y^2+27z^2\right)=2r\left( \ell,\langle 6 \rangle \perp \begin{pmatrix} 16&4\\4&28 \end{pmatrix} \right).
$$
Let $M_1=\langle 6 \rangle \perp \begin{pmatrix} 16&4\\4&28 \end{pmatrix}$.
Then 
$$
\gen(M_1)=\spn(M_1)=\left\{ M_1,M_2=\langle 4,6,108 \rangle \right\}.
$$ 
Note that $10\longrightarrow M_2$. By \cite[Proposition 1]{BH}, we have
$r(10q^2s^2,M_1)>0$, and this completes the proof.
\end{proof}

\begin{prop} \label{prop1918}
The ternary triangular form $\D(1,9,18)$ is regular.
\end{prop}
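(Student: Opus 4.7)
The plan is to mirror the strategy of the preceding four propositions in this section, in particular Proposition \ref{prop149}. Let $L=\langle 1,9,18\rangle$ and let $\ell=8n+28$ be an integer with $\ell\longrightarrow\gen(L)$. First I would compute $\gen(L)$ explicitly and, as in the proof for $\D(1,6,27)$, either invoke a class-number-$1$ argument or compare with $\lambda_2(L)$ to deduce $\ell\longrightarrow L$. The remaining task is then to produce a vector in $R_{(1,1,1)}(\ell,L)$.

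Next I would carry out the $2$-adic parity analysis. Since $\ell\equiv 4\Mod 8$, any $(x,y,z)\in R(\ell,L)$ satisfies $(x^2,9y^2,18z^2)\equiv (4,0,0)$, $(0,4,0)$, or $(1,1,2)\Mod 8$, and only the last triple corresponds to a vector in $R_{(1,1,1)}(\ell,L)$. Combining this with a $3$-adic case split on the residues of $x$ and $z$ modulo suitable powers of $3$, I would look for explicit linear substitutions---of the type used in the proofs of $\D(1,4,9)$, $\D(1,3,27)$, and $\D(1,6,27)$---which send a bad-parity representation to a good one. I expect several branches, each handled by an integer change of variables that preserves $L$.

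If in every branch the reduction fails---that is, every representation of $\ell$ by $L$ shares a fixed congruence modulo an appropriate power of $2$ or $3$---I would apply Lemma \ref{lemeigen}. The key step is to produce a rational isometry $T$ of $M_L=\diag(1,9,18)$ of infinite order which maps the set of bad representations into itself. Lemma \ref{lemeigen} would then force any bad representation $(u,v,w)$ to lie in a $1$-dimensional eigenspace of $T$, hence to be a multiple $k(\alpha,\beta,\gamma)$ of a fixed primitive vector. This would yield $\ell=(\alpha^2+9\beta^2+18\gamma^2)k^2$ and, by ruling out small $|k|$, produce a prime $q\ge 5$ dividing $k$. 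Finally, as in the closing parts of the $\D(1,3,27)$ and $\D(1,6,27)$ proofs, I would rewrite $r_{(1,1,1)}(\ell,L)$ via Lemma \ref{lem13} as (a multiple of) the representation number of a specific auxiliary ternary form $M_1$, compute $\gen(M_1)$, exhibit a small integer represented by some form in that genus, and invoke \cite[Proposition 1]{BH} to conclude that $r_{(1,1,1)}(\ell,L)>0$.

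The main obstacle will be identifying the correct isometry $T$ and pinning down its eigenspace. Because $18=2\cdot 3^2$, the $2$-adic parity constraints and the $3$-adic divisibility constraints interact more intricately than in the purely odd-discriminant case $\D(1,3,27)$; the branch analysis has to be orchestrated so that the remaining bad vectors collapse into the fixed line of a single isometry rather than being scattered across several independent obstructions. Once that line is identified, the spinor-genus bookkeeping at the end should be routine.
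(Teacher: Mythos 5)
Your outline correctly identifies the general toolkit of this section (genus computation, $2$-adic parity analysis, explicit $3$-adic substitutions, and a final spinor-genus step via Lemma \ref{lem13} and \cite[Proposition 1]{BH}), and your parity analysis of $\ell\equiv 4\Mod 8$ is accurate. But as written this is a plan, not a proof: no substitution is exhibited, no genus is computed, and no isometry is produced. More importantly, the one idea that actually makes the argument for $\langle 1,9,18\rangle$ work is absent. The paper exploits the decomposition $9y^2+18z^2=9(y^2+2z^2)$ and applies Lemma \ref{lem12} (Jones' result for $x^2+2y^2$) to replace $(y,z)$ by $(y_1,z_1)$ with $y_1\not\equiv z_1\Mod 3$ whenever $y^2+2z^2>0$ and $x\not\equiv 0\Mod 3$; after adjusting the sign of $x$ so that $x+y_1-z_1\equiv 0\Mod 3$, a single explicit rational substitution lands in $R_{(1,1,1)}(\ell,L)$. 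The case $x\equiv 0\Mod 3$ is likewise killed by one explicit substitution. Without Lemma \ref{lem12} you have no mechanism to escape the congruence class $y\equiv z\Mod 3$ shared by a given representation, and your branch analysis would stall there.

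Your proposed fallback --- Lemma \ref{lemeigen} with a rational isometry $T$ of $\diag(1,9,18)$ --- is a misdirection for this form: the paper never needs it, because after Lemma \ref{lem12} the only residual ``bad'' configuration is the degenerate one $y=z=0$, i.e.\ $\ell=x^2$, which already lies on a line without any eigenvector argument. That case is dispatched exactly as you anticipated at the end: $r_{(1,1,1)}(\ell,L)=2\,r\bigl(\tfrac{\ell}{2},M_1\bigr)$ with $M_1=\langle 9\rangle\perp\begin{pmatrix}5&2\\2&8\end{pmatrix}$, $\gen(M_1)=\spn(M_1)$, and $2q^2\longrightarrow M_1$ for a prime $q\ge 5$ dividing $x/2$. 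So the obstacle you flag (finding $T$ and its eigenspace) is not the real one; the real missing step is the reduction of the ternary problem to the binary form $y^2+2z^2$ and the invocation of Lemma \ref{lem12}.
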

\begin{proof}
Let $L=\langle 1,9,18 \rangle$ be a ternary quadratic form and let $\ell =8n+28$ be an integer such that $\ell \longrightarrow \gen(L)$. 
Note that
$$
\gen (L)=\left\{ L,K=\begin{pmatrix} 4 & 1 & -1 \\ 1 & 7 & 2 \\ -1 & 2 & 7
\end{pmatrix} \right\} .
$$
Since $\lambda_2(L) \simeq \langle 9 \rangle \perp \begin{pmatrix} 2 & 1 \\ 
1 & 5 \end{pmatrix} \simeq \lambda_2(K)$, we have $\ell \longrightarrow L$.
Let $(x,y,z)\in R(\ell,L)$. We may assume that 
$x\equiv y\equiv z\equiv 0\Mod 2$. 
Then $x\not\equiv y\Mod 4$. 

First, assume that $x \not \equiv 0 \Mod 3$ and $y^2+2z^2>0$.
Then by Lemma \ref{lem12}, 
there is a vector $(y_1,z_1)\in \z^2$ with $y_1\not\equiv z_1 \Mod 3$,\ $y_1\equiv y\Mod 4$ and $z_1\equiv z\Mod 2$ such that $y_1^2+2z_1^2=y^2+2z^2$.
So $x^2+9y_1^2+18z_1^2=\ell$. 
By replacing $x$ by $-x$, if necessary, we may assume $x+y_1-z_1\equiv 0 \Mod 3$. 
If we let 
$$
(x_2,y_2,z_2)=\left( \displaystyle\frac{3x+9y_1+18z_1}{6},\frac{-x+5y_1-2z_1}{6},
\frac{-x-y_1+4z_1}{6} \right),
$$ 
then one may easily check that $(x_2,y_2,z_2)\in R_{(1,1,1)}\left( \ell,L\right)$.

Now, assume that $x \not \equiv 0 \Mod 3$ and $y=z=0$.
Note that
$$
r_{(1,1,1)}\left(\ell,L\right)=2r\left( \ell,(v-4u)^2+9v^2+18w^2\right)
=2r\left(\frac{\ell}{2}, \langle 9 \rangle \perp \begin{pmatrix} 5 & 2 \\ 
2 & 8 \end{pmatrix} \right).
$$
If we let $M_1=\langle 9 \rangle \perp \begin{pmatrix} 5 & 2 \\ 
2 & 8 \end{pmatrix}$, then 
$$
\gen(M_1)=\spn(M_1)=\left\{ M_1, M_2=\langle 36 \rangle \perp 
\begin{pmatrix} 2 & 1 \\ 1 & 5 \end{pmatrix} \right\} .
$$
Then by \cite[Proposition 1]{BH}, $2p^2\longrightarrow M_1$ for any prime $p\ge 5$. 
Note that
$$
\frac{\ell}{2}=2\left(\frac{x}{2}\right)^2,\quad \left( \frac{x}{2},6\right )=1\quad \text{and}\quad \frac{x}{2}>1.
$$
So there is a prime divisor $q$ of $\displaystyle\frac{x}{2}$ with $q\ge 5$. 
Thus we have $r\left( \displaystyle\frac{\ell}{2},M_1\right) >0$.

Finally, assume that $x\equiv 0 \Mod 3$.
If we let
$$
(x_3,y_3,z_3)=\left(\displaystyle\frac{3x+9y+18z}{6},\frac{-x-3y+6z}{6},
\frac{-x+3y}{6} \right),
$$
then one may easily check that 
$(x_3,y_3,z_3)\in R_{(1,1,1)}(\ell,L)$. 
\end{proof}

\begin{prop} \label{prop1118}
The ternary triangular form $\D(1,1,18)$ is regular.
\end{prop}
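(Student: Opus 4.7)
The plan is to follow the template established in Propositions \ref{prop149} and \ref{prop1918}. Set $L = \langle 1,1,18\rangle$ and let $\ell = 8n+20$ be an integer with $\ell \longrightarrow \gen(L)$. The goal is to produce a vector $(x,y,z) \in R(\ell,L)$ with $xyz \equiv 1 \pmod 2$.

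First, I would determine $\gen(L)$ explicitly by enumerating Minkowski-reduced ternary lattices with discriminant $18$ and matching local invariants at $2$ and $3$. I expect the genus to consist of $L$ together with at most one other class $K$. To establish $\ell \longrightarrow L$, I would compute $\lambda_2(L)$ (which I expect to be $\langle 2,2,9\rangle$) and verify $\lambda_2(L) \simeq \lambda_2(K)$, giving
\[
r(\ell, L) \;=\; r\bigl(\tfrac{\ell}{2},\, \lambda_2(L)\bigr) \;=\; r\bigl(\tfrac{\ell}{2},\, \lambda_2(K)\bigr) \;=\; r(\ell, K),
\]
exactly as in the proof of Proposition \ref{prop1918}. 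This reduces matters to producing a representation of $\ell$ by $L$ itself with the correct parity.

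Next comes the parity analysis. Since $\ell \equiv 4 \pmod 8$ and $18z^2 \equiv 2z^2 \pmod 8$, any $(x,y,z) \in R(\ell, L)$ satisfies $x^2 + y^2 \equiv 4 - 2z^2 \pmod 8$; hence $z$ odd forces $x, y$ both odd (which is exactly what is needed), while $z$ even forces $x, y$ both even. The task thus reduces to exhibiting a representation with $z$ odd.

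The main obstacle is handling the case when every representation has $z$ even. If some such representation satisfies $z \neq 0$, I would construct a rational isometry $T$ of $L$ analogous to the matrices appearing in Propositions \ref{prop149}--\ref{prop1918} (obtained by combining a Pythagorean rotation in the $\langle 1,1\rangle$-plane with a compatible action on the $\langle 18\rangle$-factor, after clearing denominators) and verify that applying $T$ yields an integral vector in $R(\ell, L)$ with $z$-coordinate now odd. In the residual subcase where every representation has $z = 0$, so $\ell = x^2 + y^2$ with $x, y$ even, I would apply Lemma \ref{lemeigen} to a suitable rational automorphism $T$ of $L$ to force $(x,y,0) \in \ker(T - \det(T)I)$; this pins $\ell$ to the form $k m^2$ for a small explicit $k$, after which I invoke \cite[Proposition 1]{BH} applied to the auxiliary lattice computing $r_{(1,1,1)}(\ell, L)$ to deduce representability with all three coordinates odd. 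The hard part will be constructing the correct rational isometry $T$: since $\langle 1,1,18\rangle$ has no obvious nontrivial rational automorphism intertwining all three coordinates, $T$ must be tailored carefully (as in the intricate $3\times 3$ matrices of the preceding propositions) so that a single prescribed ``bad'' representation is mapped to an integral vector whose parity profile finally allows the descent to close.
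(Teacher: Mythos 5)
Your setup (genus computation, the $\lambda_2$ argument giving $\ell\longrightarrow L$, and the observation that $\ell\equiv 4\Mod 8$ forces either all of $x,y,z$ odd or all even, so that one only needs a representation with $z$ odd) is sound and matches the paper. But the heart of your argument --- producing such a representation --- is not actually carried out. You defer everything to an unspecified rational isometry $T$ that is supposed to convert a representation with $z$ even into one with $z$ odd, and you yourself flag its construction as ``the hard part.'' That is precisely the gap: the eigenvector technique of Lemma \ref{lemeigen} is used elsewhere in the paper to break congruence obstructions modulo $3$ or $9$ (the denominators of those matrices are powers of $3$), not to flip parities at the prime $2$, and it is far from clear that a rational automorphism of $\langle 1,1,18\rangle$ with the integrality and parity-switching properties you need exists. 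A proposal whose decisive step is ``construct the right $T$'' without exhibiting $T$ does not close.

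The paper's actual proof needs no isometry or spinor-genus input at all; the key idea you are missing is to split on $\ell$ modulo $3$. If $\ell\equiv 0\Mod 3$, then any representation has $3\mid x$ and $3\mid y$, so $9\mid \ell$ and $\ell/9\equiv 4\Mod 8$ is represented by $\langle 1,1,2\rangle$ with all coordinates odd because $\Delta(1,1,2)$ is universal; rescaling by $3$ in the first two slots gives the desired odd representation of $\ell$ by $L$. If $\ell\equiv 1\Mod 3$, then $3$ divides exactly one of $x,y$, say $y$, so $\ell$ is represented by $\langle 1,9,18\rangle$, and the already-proved regularity of $\Delta(1,9,18)$ (Proposition \ref{prop1918}) supplies a representation with all coordinates odd, which pulls back to $L$. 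If $\ell\equiv 2\Mod 3$, then $xy\not\equiv 0\Mod 3$ and, after normalizing $x\equiv 0$, $y\equiv 2\Mod 4$ and $x\equiv y\Mod 3$, the explicit integral substitution $(x,y,z)\mapsto\bigl(\frac{x+y}{2}+3z,\,-\frac{x+y}{2}+3z,\,\frac{-x+y}{6}\bigr)$ preserves $x^2+y^2+18z^2$ and lands in $R_{(1,1,1)}(\ell,L)$. Your plan would need to be replaced by (or completed into) something of this kind before it constitutes a proof. (Minor point: $\lambda_2(\langle 1,1,18\rangle)\simeq\langle 1,1,9\rangle$, not $\langle 2,2,9\rangle$.)
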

\begin{proof}
Let $L=\langle 1,1,18 \rangle$ be a ternary quadratic form and let $\ell =8n+20$ be an integer such that $\ell \longrightarrow \gen(L)$. Note that
$$
\gen (L)=\left\{ L,K=\langle 2 \rangle \perp \begin{pmatrix} 2 & 1 \\ 1 & 5 \end{pmatrix} \right\} .
$$
Since $\lambda_2(L) \simeq \langle 1,1,9 \rangle \simeq \lambda_2(K)$, we have $\ell \longrightarrow L$.
Let $(x,y,z)\in R(\ell,L)$. 

First, assume that $\ell \equiv 0 \Mod 3$. 
Then $x\equiv y\equiv 0\Mod 3$ and thus $\ell \equiv 0 \Mod 9$. 
So 
$$
\left(\displaystyle\frac{x}{3}\right)^2+\left(\displaystyle\frac{y}{3}\right)^2+2z^2=\displaystyle\frac{\ell}{9}.
$$
Note that $\displaystyle\frac{\ell}{9} \ge 4$ and $\displaystyle\frac{\ell}{9} \equiv 4 \Mod 8$. 
Since the triangular form $\Delta (1,1,2)$ is universal, there is a vector $(x_1,y_1,z_1)\in R_{(1,1,1)}\left( \displaystyle\frac{\ell}{9},\langle 1,1,2 \rangle \right)$
and thus $(3x_1,3y_1,z_1)\in R_{(1,1,1)}\left( \ell,L\right)$.

Now, assume $\ell \equiv 1\Mod 3$. Note that $xy\equiv 0\Mod 3$. 
Without loss of generality, we may assume that $y\equiv 0\Mod 3$. 
Then 
$$
\ell=x^2+9\left(\displaystyle\frac{y}{3} \right)^2+18z^2.
$$
Note that $\ell \ge 28$, $\ell \equiv 4\Mod 8$. 
Since $\Delta(1,9,18)$ is regular by Proposition \ref{prop1918}, 
there is a vector $(x_2,y_2,z_2)\in R_{(1,1,1)}\left( \ell,\langle 1,9,18 \rangle \right)$ and thus 
$(x_2,3y_2,z_2)\in R_{(1,1,1)}\left( \ell,\langle 1,1,18 \rangle \right)$.

Finally, assume that $\ell \equiv 2\Mod 3$. Since $x^2+y^2+18z^2\equiv 4\Mod 8$, 
we may assume that $x\equiv 0\Mod 4$, $y\equiv 2\Mod 4$ and $z\equiv 0\Mod 2$. 
Since $xy\not\equiv 0\Mod 3$, we may further assume that $x\equiv y\Mod 3$. If we let
$$
(x_3,y_3,z_3)=\left(\frac{x+y}{2}+3z,-\frac{x+y}{2}+3z,\frac{-x+y}{6} \right),
$$
then one may easily check that $(x_3,y_3,z_3)\in R_{(1,1,1)}\left( \ell,L\right)$. 
\end{proof}

\begin{table}[ht]
\caption{Regular ternary triangular forms}
\begin{tabular}{|lll|}
\hline \rule[-2mm]{0mm}{6mm}
$\Delta_1=\Delta(1,1,1)$,\quad \quad \quad \quad &$\Delta_2=\Delta(1,1,2)$,\quad \quad \quad \quad &$\Delta_3=\Delta(1,1,3)$,\\
\hline \rule[-2mm]{0mm}{6mm} 
$\Delta_4=\Delta(1,1,4)$, &$\Delta_5=\Delta(1,1,5)$,\ &$\Delta_6=\Delta(1,1,6)$,\\
\hline \rule[-2mm]{0mm}{6mm}
$\Delta_7=\Delta(1,2,2)$, &$\Delta_8=\Delta(1,2,3)$, &$\Delta_9=\Delta(1,2,4)$,\\
\hline \rule[-2mm]{0mm}{6mm}
$\Delta_{10}=\Delta(1,1,9)$,&
$\Delta_{11}=\Delta(1,3,3)$,&
$\Delta_{12}=\Delta(1,2,5)$,\\
\hline \rule[-2mm]{0mm}{6mm}
$\Delta_{13}=\Delta(1,1,12)$,&
$\Delta_{14}=\Delta(1,3,4)$,&
$\Delta_{15}=\Delta(2,2,3)$,\\
\hline \rule[-2mm]{0mm}{6mm}
$\Delta_{16}=\Delta(1,1,18)$,&
$\Delta_{17}=\Delta(1,3,6)$,&
$\Delta_{18}=\Delta(2,3,3)$,\\
\hline \rule[-2mm]{0mm}{6mm}
$\Delta_{19}=\Delta(1,2,10)$,&
$\Delta_{20}=\Delta(1,1,21)$,&
$\Delta_{21}=\Delta(1,4,6)$,\\
\hline \rule[-2mm]{0mm}{6mm}
$\Delta_{22}=\Delta(1,5,5)$,&
$\Delta_{23}=\Delta(1,3,9)$,&
$\Delta_{24}=\Delta(1,3,10)$,\\
\hline \rule[-2mm]{0mm}{6mm}
$\Delta_{25}=\Delta(1,3,12)$,&
$\Delta_{26}=\Delta(1,4,9)$,&
$\Delta_{27}=\Delta(1,6,6)$,\\
\hline \rule[-2mm]{0mm}{6mm}
$\Delta_{28}=\Delta(3,3,4)$,&
$\Delta_{29}=\Delta(1,5,10)$,&
$\Delta_{30}=\Delta(1,3,18)$,\\
\hline \rule[-2mm]{0mm}{6mm}
$\Delta_{31}=\Delta(1,6,9)$,&
$\Delta_{32}=\Delta(2,3,9)$,&
$\Delta_{33}=\Delta(3,3,7)$,\\
\hline \rule[-2mm]{0mm}{6mm}
$\Delta_{34}=\Delta(2,3,12)$,&
$\Delta_{35}=\Delta(1,3,27)$,&
$\Delta_{36}=\Delta(1,9,9)$,\\
\hline \rule[-2mm]{0mm}{6mm}
$\Delta_{37}=\Delta(1,3,30)$,&
$\Delta_{38}=\Delta(2,5,10)$,&
$\Delta_{39}=\Delta(1,9,12)$,\\
\hline \rule[-2mm]{0mm}{6mm}
$\Delta_{40}=\Delta(2,3,18)$,&
$\Delta_{41}=\Delta(1,5,25)$,&
$\Delta_{42}=\Delta(3,7,7)$,\\
\hline \rule[-2mm]{0mm}{6mm}
$\Delta_{43}=\Delta(2,5,15)$,&
$\Delta_{44}=\Delta(1,6,27)$,&
$\Delta_{45}=\Delta(1,9,18)$,\\
\hline \rule[-2mm]{0mm}{6mm}
$\Delta_{46}=\Delta(1,9,21)$,&
$\Delta_{47}=\Delta(1,21,21)$,&
$\Delta_{48}=\Delta(5,6,15)$,\\
\hline \rule[-2mm]{0mm}{6mm}
$\Delta_{49}=\Delta(3,7,63)$.&&\\
\hline
\end{tabular}
\end{table}

\begin{thm} \label{mainthm}
There are exactly $49$ regular ternary triangular forms, which are listed in Table 4.
\end{thm}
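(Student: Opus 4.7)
The plan is to use the $\lambda_p$-descent machinery to reduce the classification to an iterative enumeration rooted at the $17$ stable regular ternary triangular forms provided by Theorem \ref{thmfloor}. By Lemma \ref{lemmp}, the only primes that can appear as ``missing primes'' under a descent step are $3$, $5$, and $7$, so every regular ternary triangular form $\Delta(a',b',c')$ lies above some stable regular $\Delta(a,b,c)$ via a finite chain of $\lambda_p$-maps with $p\in\{3,5,7\}$. Since $\lambda_p$ and $\lambda_q$ commute for distinct odd primes, it suffices to invert these descents one prime at a time and collect all candidate forms that arise.

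First I would, for each of the $17$ forms $\Delta_1,\dots,\Delta_{17}$ in Theorem \ref{thmfloor}, enumerate all preimages under $\lambda_p^{-1}$ for $p=3,5,7$ using the case analysis of Table~2. For each preimage $\Delta(a',b',c')$ I would then test regularity computationally: search for a small locally-represented integer that is not globally represented (giving a witness of non-regularity, such as the values recorded in parentheses in Table~3), and if no such witness is found up to a large bound, place the form in the candidate list. The crucial point is that non-regular preimages must still be retained and re-tested at the next layer, because Lemma \ref{lemdescend1} and Lemma \ref{lemdescend2} only guarantee that the descent of a regular form is regular, not the converse; however, any regular form higher up must again appear as a preimage of something already in our list. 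Iterating this process until the preimages stabilize produces the list of $49$ candidates in Table~4.

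It remains to certify that all $49$ candidates are indeed regular. The $17$ stable ones are done by Theorem \ref{thmfloor}. For the remaining $32$ forms, the strategy splits into two cases. For most of them, the associated diagonal ternary quadratic form $\langle a,b,c\rangle$ appears on the Jones--Pall list \cite{JP} of regular diagonal ternary quadratic forms, so any $m\equiv a+b+c\pmod{8}$ locally represented by $\Delta(a,b,c)$ is globally represented by $\langle a,b,c\rangle$; one then only has to lift such a representation to one with all variables odd, which in all these cases follows either because the genus forces the $2$-adic primitive representation to be of the form $(1,1,1)\bmod 2$, or via class-number-one arguments combined with Lemma \ref{lem13} and the parity arguments used in the proof of Theorem \ref{thmfloor}. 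For the five exceptional cases $\Delta(1,4,9)$, $\Delta(1,3,27)$, $\Delta(1,6,27)$, $\Delta(1,9,18)$, and $\Delta(1,1,18)$, whose associated quadratic forms are not regular, one argues individually by Propositions \ref{prop149}--\ref{prop1118}, which combine genus analysis with the Jones two-variable lemma (Lemma \ref{lem12}) and the rational-isometry eigenvector trick (Lemma \ref{lemeigen}) to force the existence of an odd-variable solution.

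The main obstacle will be the second half: for the five exceptional forms, neither the corresponding quadratic form's regularity nor a naive odd-lift suffices, and one must carefully exploit rational isometries together with genus-to-spinor-genus reductions (as in \cite{BH}) to dispose of the degenerate orbits. Additionally, within the enumeration step, some care is needed to ensure completeness: for each candidate declared non-regular a certified locally-represented-but-not-globally-represented integer must be exhibited, and for each candidate declared regular all of its own preimages must also be enumerated and tested, so that the iterative process terminates only when no further regular forms arise. Once this double bookkeeping is carried out (with MAPLE handling the finite computations), Table~4 is exhaustive, proving the theorem.
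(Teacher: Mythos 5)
Your proposal follows the paper's proof of Theorem \ref{mainthm} essentially step for step: reduce to the $17$ stable forms of Theorem \ref{thmfloor}, restrict the relevant primes to $\{3,5,7\}$ via Lemma \ref{lemmp}, enumerate preimages of $\lambda_p$ using Table 2, and certify the $32$ non-stable candidates by the class-number-one, parity, and Lemma \ref{lem13} arguments, with Propositions \ref{prop149}--\ref{prop1118} handling the five forms whose associated quadratic forms are not regular. So the architecture is the right one and matches the paper.

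One point in your enumeration step is backwards, and taken literally it destroys termination. You insist that non-regular preimages ``must still be retained and re-tested at the next layer'' because the descent lemmas only go one way. But Lemmas \ref{lemdescend1} and \ref{lemdescend2} together cover every way a primitive ternary triangular form can fail to be $p$-stable: if $p$ divides two of the coefficients, Lemma \ref{lemdescend1} applies, and if $p$ divides exactly one coefficient to order at least $2$ and the form is not already $p$-stable, then $\left(\frac{-ab}{p}\right)=-1$ and Lemma \ref{lemdescend2} applies. Consequently every non-stable regular form has a \emph{regular} $\lambda_p$-descent of strictly smaller discriminant, so every regular form is connected to its stable root by a chain consisting entirely of regular forms. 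This is precisely why the paper only takes preimages of forms already certified regular and stops when no inverse image contains a new regular form. If you instead keep non-regular preimages alive and continue inverting $\lambda_p$ on them, the candidate tree is infinite in depth (every form has preimages such as $\Delta(p^2a,b,c)$), and your process never ``stabilizes.'' Discarding non-regular preimages is not an optimization but the reason the search is finite, and its justification is the exhaustiveness of the two descent lemmas rather than the vaguer claim that any regular form higher up must reappear as a preimage of something already listed. With that correction, the rest of your outline is the paper's argument.
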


\begin{proof}
For $1\le i\le 49$, we write $\Delta_i=\Delta(a_i,b_i,c_i)$. Let $L_i=\langle a_i,b_i,c_i\rangle$ be a ternary quadratic form and let $\ell_i(n)=8n+a_i+b_i+c_i$ be any integer such that $\ell_i(n)\longrightarrow \gen(L_i)$. 
In Theorem \ref{thmfloor} and Propositions \ref{prop149}$\sim$\ref{prop1118}, we have already proved the regularity of each $\Delta_i$ when 
$$
i\in \{ k: 1\le k\le 9, 12\le k\le 16,\ \text{or} \ k=19,20,21,24,26,35,44,45 \}.
$$ 
Hence we may assume that $i$ is not contained in the above set. 
Note that for any integer $i$ which is not contained in $\{ 16,26,35,44,45\}$, which we alreay considered in Propositions \ref{prop149}$\sim$\ref{prop1118},
the corresponding quadratic form $L_i$ has class number $1$ and thus $\ell_i(n) \longrightarrow L_i$. 
If $i\in \{ 10,36,39,40,41,49\}$,
then one may easily show that $R(\ell_i(n),L_i)=R_{(1,1,1)}(\ell_i(n),L_i)$. 
Hence $\ell_i(n)\overset{2}{\longrightarrow} L_i$ in this case. 

Now, we consider the case when $i=30$. Note that if $x^2+3y^2+18z^2=8n+22$, then we have
$z\equiv 1\Mod 2$ and $x\equiv y\Mod 2$. By Lemma \ref{lem13}, we have
$$
\begin{array}{rl}
r_{(1,1,1)}(8n+22,\langle 1,3,18 \rangle)&=\displaystyle\sum_{z\in \z}
r_{(1,1)}(8n+22-18z^2,\langle 1,3 \rangle) \\[1.2em]
&=\displaystyle\frac23 r(8n+22,\langle 1,3,18 \rangle).
\end{array}
$$
Since the proof of the case when $i=48$ is quite similar to this, we omit the proof.

Assume  that $i=31$. Since the quadratic form $\langle 1,6,9 \rangle$ has class number $1$
and it primitively represents $8n+16$ over $\z_2$, there is a vector
$$
(x,y,z)\in R(8n+16,\langle 1,6,9\rangle),\quad (x,y,z,2)=1.
$$
Since $x^2+6y^2+9z^2\equiv 0\Mod 8$, we have $xyz\equiv 1\Mod 2$.

For the remaining $i$, that is, 
$$
i\in \{ 11,17,18,22,23,25,27,28,29,32,33,34,37,38,42,43,46,47 \},
$$
one may check that $\Delta(a_i,b_i,c_i)$ can be obtained from a ternary triangular form whose regularity is already proved
by taking $\lambda_p$-transformations several times for some $p\in \{3,5,7\}$.  Furthermore, one may easily check that the regularity is preserved during taking the $\lambda_p$-transformation.  This completes the proof.
\end{proof}



\begin{thebibliography}{abcd}

\bibitem {BH} J. W. Benham and J. S. Hsia, {\em Spinor equivalence of quadratic forms}, J. Number Theory \textbf{17}(1983), 337-342.

\bibitem {B1} M. Bhargava, {\em On the Conway-Schneeberger fifteen theorem}, Contemp. Math. \textbf{272}(2000), 27-38.

\bibitem {BHa} M. Bhargava and J. Hanke, {\em Universal quadratic forms and the 290 theorem}, preprint.

\bibitem {Bu} D. A. Burgess, {\em On character sums and L-series, \rm{II}}, Proc. London Math. Soc. \textbf{13}(1963), 524-536. 

\bibitem {CO03} W. K. Chan and B.-K. Oh, {\em Finiteness theorems for positive definite $n$-regular quadratic forms}, Trans. Amer. Math. Soc. \textbf{355}(2003), 2385-2396.

\bibitem {CO09} W. K. Chan and B.-K. Oh, {\em Almost universal ternary sums of triangular numbers}, Proc. Amer. Math. Soc. \textbf{137}(2009), 3553-3562.

\bibitem {CO13} W. K. Chan and B.-K. Oh, {\em Representations of integral quadratic polynomials}, Contemp. Math. Am. Math. Soc. \textbf{587}(2013), 31-46.

\bibitem {CR} W. K. Chan and J. Ricci, {\em The representation of integers by positive ternary quadratic polynomials} J. Number Theory  \textbf{156}(2015), 75-94.

\bibitem {D} L. E. Dickson, {\em Ternary quadratic forms and congruences}, Ann. of Math. \textbf{28}(1927), 331-341.

\bibitem {D99} L. E. Dickson, {\em History of the theory of numbers}, Vol. II, AMS Chelsea Publ., 1999.

\bibitem {E} A. G. Earnest, {\em The representation of binary quadratic forms by positive definite quaternary quadratic forms}, Trans. AMS \textbf{345}(1994), 853-863.

\bibitem {E95} A. G. Earnest, {\em An application of character sum inequalities to quadratic forms},  Number Theory, Canadian Math. Soc. Conference Proceedings \textbf{15}(1995), 155-158.

\bibitem {H} J. S. Hsia, {\em Regular positive ternary quadratic forms}, Mathematika \textbf{28}(1981), 231-238.

\bibitem {JKS} W. C. Jagy, I. Kaplansky and A. Schiemann, {\em There are 913 regular ternary forms}, Mathematika \textbf{44}(1997), 332-341.

\bibitem {JLO} J. Ju, I. Lee and B.-K. Oh, {\em A generalization of Watson transformation and representations of ternary quadratic forms}, J. Number Theory \textbf{167}(2016), 202-231.

\bibitem {JP} B. W. Jones and G. Pall, {\em Regular and semi-regular positive ternary quadratic forms}, Acta Math. \textbf{70}(1940), 165-190.

\bibitem {KKO} B. M. Kim, M.-H. Kim, and B.-K. Oh, {\em 2-universal positive definite integral quinary quadratic forms}, Contemporary Math. \textbf{249}(1999), 51-62.

\bibitem {KO} M. Kim and B.-K. Oh, {\em The number of representations by a ternary sum of triangular numbers}, J. Korean Math. Soc., \textbf{56}(2019), 67-80.

\bibitem {Ki} Y. Kitaoka, {\em Arithmetic of quadratic forms}, Cambridge University Press, 1993.

\bibitem {L} R. J. Lemke Oliver, {\em Representation by ternary quadratic forms}, Bull. Lond. Math. Soc. \textbf{46}(2014), 1237-1247.

\bibitem {O1} B.-K. Oh, {\em Regular positive ternary quadratic forms}, Acta Arith. \textbf{147}(2011), 233-243.

\bibitem {O2} B.-K. Oh, {\em Representations of arithmetic progressions by positive definite quadratic forms}, Int. J. Number Theory \textbf{7}(2011),  1603-1614.

\bibitem {O3} B.-K. Oh, {\em Ternary universal sums of generalized pentagonal numbers}, J. Korean Math. Soc. \textbf{48}(2011),  837-847.

\bibitem {OM} O. T. O'Meara, {\em Introduction to quadratic forms}, Springer Verlag, New York, 1963.

\bibitem {R} S. Ramanujan, {\em On the expression of a number in the form $ax^2+by^2+cz^2+du^2$}, Proc. Camb. Phil. Soc. \textbf{19}(1916), 11-21.

\bibitem {W1} G. L. Watson, {\em Some problems in the theory of numbers}, Ph.D. Thesis, University of London, 1953.

\bibitem {W2} G. L. Watson, {\em Regular positive ternary quadratic forms}, J. Lond. Math. Soc. \textbf{13}(1976), 97-102.

\end{thebibliography}
\end{document}